\colorlet{symbols}{blue!90!black}
\def\symbol#1{\textcolor{symbols}{#1}}
\def\1{\mathbf{\symbol{1}}}
\def\X{\symbol{X}}
\def\sXi{\symbol{\Xi}}
\def\id{\mathrm{id}}
\def\emptyset{\mathop{\centernot\ocircle}}
\def\Wick#1{{:} #1 {:}}
\def\ex{\mathrm{ex}}
\colorlet{testcolor}{green!60!black}
\tikzset{
	root/.style={circle,fill=testcolor,inner sep=0pt, minimum size=2mm},
	dot/.style={circle,fill=symbols,draw=symbols,inner sep=0pt,minimum size=0.5mm},
	eps/.style={circle,fill=white,draw=symbols,inner sep=0pt,minimum size=1mm},
	int/.style={circle,fill=black,draw=black,inner sep=0pt,minimum size=0.7mm},
	var/.style={circle,fill=black!10,draw=black,inner sep=0pt, minimum size=2mm},
	dotred/.style={circle,fill=black!50,inner sep=0pt, minimum size=2mm},
	generic/.style={semithick,shorten >=1pt,shorten <=1pt},
	dist/.style={ultra thick,draw=testcolor,shorten >=1pt,shorten <=1pt},
	testfcn/.style={ultra thick,testcolor,shorten >=1pt,shorten <=1pt,<-},
	testfcnx/.style={ultra thick,testcolor,shorten >=1pt,shorten <=1pt,<-,
		postaction={decorate,decoration={markings,mark=at position 0.6 with {\drawx}}}},
	keps/.style={semithick,shorten >=1pt,shorten <=1pt,densely dashed,->},
	kprimex/.style={semithick,shorten >=1pt,shorten <=1pt,densely dashed,->,
		postaction={decorate,decoration={markings,mark=at position 0.4 with {\drawx}}}},
	kernel/.style={semithick,shorten >=1pt,shorten <=1pt,->},
	multx/.style={shorten >=1pt,shorten <=1pt,
		postaction={decorate,decoration={markings,mark=at position 0.5 with {\drawx}}}},
	kernelx/.style={semithick,shorten >=1pt,shorten <=1pt,->,
		postaction={decorate,decoration={markings,mark=at position 0.4 with {\drawx}}}},
	kernel1/.style={->,semithick,shorten >=1pt,shorten <=1pt,postaction={decorate,decoration={markings,mark=at position 0.45 with {\draw[-] (0,-0.1) -- (0,0.1);}}}},
	kernel2/.style={->,semithick,shorten >=1pt,shorten <=1pt,postaction={decorate,decoration={markings,mark=at position 0.45 with {\draw[-] (0.05,-0.1) -- (0.05,0.1);\draw[-] (-0.05,-0.1) -- (-0.05,0.1);}}}},
	kernelBig/.style={semithick,shorten >=1pt,shorten <=1pt,decorate, decoration={zigzag,amplitude=1.5pt,segment length = 3pt,pre length=2pt,post length=2pt}},
	rho/.style={dotted,semithick,shorten >=1pt,shorten <=1pt},
	renorm/.style={shape=circle,fill=white,inner sep=1pt},
	labl/.style={shape=rectangle,fill=white,inner sep=1pt},
	xi/.style={circle,fill=symbols!10,draw=symbols,inner sep=0pt,minimum size=1.2mm},
	xix/.style={crosscircle,fill=symbols!10,draw=symbols,inner sep=0pt,minimum size=1.2mm},
	xib/.style={circle,fill=symbols!10,draw=symbols,inner sep=0pt,minimum size=1.6mm},
	xibx/.style={crosscircle,fill=symbols!10,draw=symbols,inner sep=0pt,minimum size=1.6mm},
	not/.style={circle,fill=symbols,draw=symbols,inner sep=0pt,minimum size=0.5mm},
cumu2n/.style={inner sep=3pt},
cumu2/.style={draw=red!80,fill=red!40},
cumu2b/.style={draw=blue!80,fill=blue!40},
cumu2nv/.style={inner sep=3pt},
cumu2v/.style={draw=red!80,fill=white,very thick},
cumu3/.style={regular polygon, regular polygon sides=3,draw=red!80,rounded corners=3pt,fill=red!40,minimum size=5mm},
cumu4/.style={regular polygon, regular polygon sides=4,draw=red!80,rounded corners=3pt,fill=red!40,minimum size=7mm},
cumu5/.style={regular polygon, regular polygon sides=5,draw=red!80,rounded corners=3pt,fill=red!40,minimum size=7mm},
	>=stealth,
	}
\def\DeclareSymbol#1#2#3{\expandafter\gdef\csname MH@symb@#1\endcsname{\tikz[baseline=#2,scale=0.15,draw=symbols]{#3}}}
\def\<#1>{\csname MH@symb@#1\endcsname}
\newtheorem{assumption}[lemma]{Assumption}
\definecolor{darkred}{rgb}{0.9,0.1,0.1}
\def\comment#1{\ifthenelse{\isodd{\value{page}}}{\marginpar{\raggedright\scriptsize{\textcolor{darkred}{#1}}}}{\marginpar{\raggedleft\scriptsize{\textcolor{darkred}{#1}}}}}  
\def\II{\mathscr{I}}
\def\EE{\mathscr{E}}
\let\D\CD
\def\TT{\mathscr{T}}
\def\MM{\mathscr{M}}
\def\LL{\mathscr{L}}
\def\fM{\mathfrak{M}}
\def\WW{{\mathbb W}}
\def\E{{\mathbf E}}
\def\P{\mathbf{P}}
\def\T{\mathbf{T}}
\def\Cum{\mathbf{E}_c}
\def\powerset{\mathscr{P}}
\def\${|\!|\!|}
\def\l|{\left|\!\left|\!\left|}
\def\r|{\right|\!\right|\!\right|}
\def\s{\mathfrak{s}}
\def\K{\mathfrak{K}}
\def\RR{\mathfrak{R}}
\def\sym{\mathrm{sym}}
\def\PPi{\boldsymbol{\Pi}}
\def\Ren{\mathscr{R}}
\def\Deltap{\Delta^{\!+}}
\begin{document}

\title{Regularity structures and the dynamical $\Phi^4_3$ model}
\author{Martin Hairer}
\institute{The University of Warwick, \email{M.Hairer@Warwick.ac.uk}}

\maketitle

\begin{abstract}
We give a concise overview of the
theory of regularity structures as first exposed in \cite{Regular}.
In order to allow to focus on the conceptual aspects of the theory, many proofs are
omitted and statements are simplified. In order to provide both motivation and focus, 
we concentrate on the study of solutions to the stochastic quantisation 
equations for the Euclidean $\Phi^4_3$ quantum field theory which can be obtained
with the help of this theory. In particular, we sketch the proofs of how one can 
show that this
model arises quite naturally as an idealised limiting object for several classes
of smooth models.
\end{abstract}

\tableofcontents

\section{Introduction}

The purpose of these notes is to give a short informal introduction to the main
concepts of the theory of regularity structures, focusing as an example on the construction
and approximation of the dynamical $\Phi^4_3$ model.
They expand and complement the notes \cite{Brazil} which focus more on the
general theory and the ideas of the proofs of the main abstract results.
Here, we instead focus mainly on the construction of the dynamical
$\Phi^4_3$ model, as well as on the way in which the abstract theory can be used to obtain
a number of rather non-obvious approximation results for this model.

The theory of ``regularity structures'', introduced in \cite{Regular}, 
unifies various flavours of the theory of (controlled)
rough paths (including Gubinelli's theory of controlled rough paths \cite{Max},
as well as his branched rough paths \cite{Trees}), as well as the usual
Taylor expansions. While it has its conceptual 
roots in Lyons's theory of rough paths \cite{Lyons},
its main advantage is that it is no longer tied to the
one-dimensionality of the time parameter, which makes it also suitable for the description of 
solutions to stochastic \textit{partial} differential equations, rather than just stochastic ordinary 
differential equations. This broader scope requires a theory that admits more flexibility than the
theory of rough paths, so we will see that the underlying algebraic structure (which in the 
case of the theory of rough paths is always given by the tensor algebra endowed with the concatenation
and shuffle products) is problem-dependent and enters as a parameter of the theory. 
While the exposition of these notes aims to be mostly self-contained, none of the proofs 
will be given in detail,
instead we will only sketch the main arguments.

The main achievement of the theory of
regularity structures is that it allows to give a meaning, as well as a robust 
approximation theory, to ill-posed stochastic 
PDEs that arise naturally when trying to describe the crossover regime between two
universality classes for various system of statistical mechanics. One
example of such an equation is the KPZ
equation arising as a natural model for one-dimensional interface motion in the crossover
regime between the Edwards-Wilkinson and the KPZ universality classes 
\cite{KPZOrig,MR1462228,KPZSurvey,KPZ}:
\begin{equ}[e:KPZ]\tag{KPZ}
\d_t h = \d_x^2 h + (\d_x h)^2 + \xi - C\;.
\end{equ}
Another example is the dynamical $\Phi^4_3$ model arising for example in the stochastic quantisation of Euclidean
quantum field theory \cite{ParisiWu,MR815192,AlbRock91,MR2016604,Regular}. However, it also
arises as a description of the crossover regime for the dynamic of phase coexistence models 
near their critical point between the mean-field theory and the ``Wilson-Fisher'' renormalisation 
fixed point \cite{MR1661764,HendrikIsing}. This model can be written formally as
\begin{equ}[e:Phi4]\tag{$\Phi^4$}
\d_t \Phi = \Delta \Phi  + C \Phi - \Phi^3 + \xi\;.
\end{equ}
In both of these examples, $\xi$ denotes space-time white noise, namely the generalised
Gaussian random field such that
\begin{equ}[e:defnoise]
\E \xi(t,x)\xi(s,y) = \delta(t-s)\delta(y-x)\;.
\end{equ}
Furthermore, $C$ is a constant (which will actually turn out to be
infinite in some sense!), and we consider these equations
on bounded tori. 
In the case of the dynamical $\Phi^4_3$ model, the spatial
variable has dimension $3$, while it has dimension $1$ in the case of the KPZ equation. 
Why are these equations problematic? As one can guess from \eqref{e:defnoise}, typical
realisations of the noise $\xi$ are not functions, but rather irregular 
space-time distributions. As a matter of fact, it follows immediately from
\eqref{e:defnoise} that the law of $\xi$ is invariant under the substitution
$\xi(t,x) \mapsto \lambda^{{d\over 2} + 1} \xi(\lambda^2 t, \lambda x)$, which
correctly suggests that its samples typically belong to the parabolic $\CC^\alpha$
spaces only for $\alpha < -1 - {d\over 2}$. (We write $\CC^\alpha$ as a shorthand
for the parabolic Besov space $B^\alpha_{\infty,\infty,\mathrm{loc}}$, which coincides with the
usual parabolic $\CC^\alpha$ spaces for positive non-integer values of $\alpha$.
It is in some sense the largest space of distributions that is 
invariant under the scaling $\phi(\cdot) \mapsto \lambda^{-\alpha} \phi(\lambda^{-1} \cdot)$,
see for example \cite{Bourgain}.)

As a consequence, even the solution $u$ to the simplest
parabolic stochastic PDE, the stochastic
heat equation $\d_t u = \Delta u + \xi$, is quite irregular. As a consequence of the above,
combined with classical parabolic Schauder estimates, one
can show that if the spatial variable has dimension $d$, then the solution $u$ belongs
to $\CC^\alpha$ (again in the parabolic sense) for every $\alpha < {2-d \over 2}$, but \textit{not} 
for $\alpha = {2-d\over 2}$. In particular, one expects solutions
$h$ to \eqref{e:KPZ} to be of class ``just about not'' H\"older-${1\over 2}$.
This begs the following question: if typical solutions $h$ are nowhere differentiable,
what does the nonlinearity $(\d_x h)^2$ mean? Similarly, we see that one needs to take $\alpha < 0$
in dimensions $2$ or higher, which means that in these dimensions
the solutions to the stochastic heat equation are no longer functions themselves, but only make
sense as space-time distributions. As a consequence, it is not clear at all what
the meaning of the nonlinearity $\Phi^3$ is in the dynamical $\Phi^4_3$ model.

This is the type of question that will be addressed in this article. It is 
hopeless to try to build a consistent theory allowing to multiply any two
Schwartz distributions, as already pointed out by Schwartz himself \cite{MR0064324}.
It is possible to extend the space of Schwartz distributions to some
larger class of objects that do form an algebra (and such that the product
of a Schwartz distribution with a smooth function has its usual meaning), and this
has been explored by Colombeau \cite{MR701451}. However, while this frameworks does allow one to
treat stochastic PDEs \cite{AlbRuss}, it is not clear at all what the meaning of the
resulting solutions are. In fact, there are very strong hints that solutions built using
such a theory are not ``physically relevant'': in the case of the sine-Gordon model
they fail to ``see'' the Kosterlitz-Thouless transition \cite{MR634447,Falco}, while the theory
of regularity structures does break down precisely at that transition \cite{Hao}, as expected for 
a theory designed to treat ``subcritical'' models.

Instead, the direction pursued here is to exploit as much as possible a priori information on
the model at hand. Instead of building a general theory allowing to multiply any two distributions, 
we are only interested in multiplying those distributions that could potentially arise in the
right hand side of the equation under consideration. The theory of regularity structures provides the tools
and techniques to design such ``purpose-built'' spaces in a systematic way, as well as the objects required 
to encode the various renormalisation procedures arising in these problems. 
While a full exposition
of the theory is well beyond the scope of this short introduction, 
we aim to give a concise overview to most of its concepts. In most cases,
we will only state results in a rather informal way and give some ideas as to 
how the proofs work, focusing on conceptual rather than technical issues. 
For precise statements and complete proofs of most of 
the results exposed here, we refer to the articles \cite{Regular,HaoShen,Jeremy,Weijun}.
The type of well-posedness results that can be proven using the techniques surveyed in this
article include the following.

\begin{theorem}\label{theo:construction}
Let $\xi_\eps = \rho_\eps * \xi$ denote the convolution of space-time 
white noise with a
compactly supported smooth mollifier $\rho$ that is scaled by $\eps$ in the spatial direction(s)
and by $\eps^2$ in the time direction. Denote by $\Phi_\eps$ the solution to
\begin{equ}
\d_t \Phi_\eps = \Delta \Phi_\eps +  C_\eps \Phi_\eps - \Phi_\eps^3 + \xi_\eps \;,\label{e:Phi43Regular}
\end{equ}
on the three-dimensional torus.
Then, there exist choices of constants $C_\eps$  diverging as $\eps \to 0$, as
well as a process $\Phi$ such that $\Phi_\eps \to \Phi$ in probability. 
Furthermore, if $C_\eps$ is suitable chosen, then $\Phi$ does not depend on the
mollifier $\rho$.
\end{theorem}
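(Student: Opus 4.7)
The plan is to construct a regularity structure $\TT$ adapted to \eqref{e:Phi4}, lift \eqref{e:Phi43Regular} to an abstract fixed point problem in a space $\DD^\gamma$ of modelled distributions, build a \emph{renormalised} sequence of models from $\xi_\eps$, and then transfer convergence of the models to convergence of the classical solutions. I would start by generating $\TT$ recursively from the noise symbol $\Xi$, the abstract polynomials $X^k$, the integration symbol $\II$ encoding convolution with the heat kernel, and the product, retaining only those symbols of homogeneity below a sufficiently large cutoff. The smoothed noise $\xi_\eps$ then produces a \emph{canonical} admissible model $(\PPi^\eps,\Gamma^\eps)$ by setting $\PPi^\eps\Xi = \xi_\eps$ and extending multiplicatively, with the natural definition on $\II$.

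Next, I would formulate \eqref{e:Phi43Regular} as an abstract fixed point
\begin{equ}
\Phi = \mathcal{P}_+ \bigl(\Xi - \Phi^3\bigr) + G*\Phi_0 \;,
\end{equ}
where $\mathcal{P}_+$ is the abstract heat-kernel convolution built from $\II$ together with its Taylor remainder, and $G*\Phi_0$ absorbs the initial data. Standard existence results in $\DD^\gamma$ produce a local-in-time solution, and for smooth models its reconstruction coincides with the classical solution of \eqref{e:Phi43Regular} with $C_\eps=0$. Crucially, the solution depends continuously on the underlying model, so convergence of $\Phi_\eps$ follows once I exhibit convergence of a suitable sequence of models in the model topology.

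The decisive ingredient is renormalisation. The canonical models do \emph{not} converge: a finite list of trees of negative homogeneity (those built from $\II$ and $\Xi$ that correspond to divergent sub-diagrams of the $\Phi^4_3$ perturbative expansion) produces $\eps$-dependent contributions diverging at rates of order $\eps^{-1}$ and $\log \eps$. I would select a sequence $M_\eps \in \RR$ of elements of a suitable renormalisation group implementing Wick-type subtractions, and set $\hat\PPi^\eps = M_\eps \PPi^\eps$. The renormalisation group is designed precisely so that the reconstruction of the abstract fixed point built with $\hat\PPi^\eps$ coincides with the classical solution of \eqref{e:Phi43Regular}, where $C_\eps$ is the subtraction constant encoded in $M_\eps$; this identification is a purely algebraic consequence of the compatibility of $\RR$ with the nonlinearity $-\Phi^3$.

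The principal obstacle is then to show that $\hat\PPi^\eps$ is Cauchy in probability in the space of admissible models. For this I would exploit the fact that, for each symbol $\tau$ of negative homogeneity, $(\hat\PPi^\eps\tau)(\varphi^\lambda_x)$ belongs to a finite Wiener chaos of $\xi$; expanding in chaos and applying Gaussian hypercontractivity reduces the task to estimating explicit convolutions of heat kernels, after the Wick subtractions have cancelled the divergent portions of the associated Feynman diagrams. This yields uniform-in-$\eps$ bounds with the correct positive power of $\lambda$, and a Kolmogorov-type criterion then upgrades the second-moment estimates to convergence of models in probability. Continuity of the solution map and of the reconstruction operator finally gives $\Phi_\eps \to \Phi$ in probability. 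Independence of $\rho$ follows because a change of mollifier modifies $\hat\PPi^\eps$ only by a further finite element of $\RR$, so fixing $C_\eps$ by a canonical (BPHZ-type) normalisation of the subtractions guarantees that the resulting limit is universal.
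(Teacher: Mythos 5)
Your proposal follows essentially the same route as the paper's sketch: construct the $\Phi^4_3$ regularity structure, lift the equation to an abstract fixed point in $\D^\gamma$ with continuous dependence on the model, act on the canonical model $\LL(\xi_\eps)$ with a renormalisation group element $M_\eps$ whose induced change in the reconstructed solution is precisely the counterterm $C_\eps\Phi_\eps$, and then establish convergence of the renormalised models via Wiener chaos decompositions, hypercontractivity, and a Kolmogorov criterion, with mollifier-independence following from the canonical (Wick/BPHZ) choice of the subtraction constants. This is the same decomposition and the same sequence of key lemmas as in Sections~4.1--4.6 of the paper.
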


\begin{remark}
Very similar results have recently been obtained by slightly different techniques.
In \cite{CC13}, the authors use the theory of paracontrolled distributions
introduced in \cite{GIP12}. This theory is a kind of Fourier space analogue to the 
theory of regularity structures, but seems for the moment restricted to first-order
expansions. In \cite{Antti} on the other hand, the author uses a variant of Wilson's renormalisation
group ideas. 
\end{remark}

\begin{remark}
We made an abuse of notation, since the space-time white noise
appearing in the equation for $h_\eps$ is on $\R \times \T^1$, while the
one appearing in the equation for $\Phi_\eps$ is on $\R \times \T^3$.
Similarly, the mollifier $\rho_\eps$ is of course different for the two equations.
\end{remark}

\begin{remark}
In both cases,  convergence is in probability in $\CC^\alpha$ for $0 < \alpha < {1\over 2}$ in the
case of the KPZ equation and $-{2\over 3} < \alpha < -{1\over 2}$ in the case of the $\Phi^4_3$ model.
This requires the corresponding initial conditions to also belong to the relevant $\CC^\alpha$
spaces.
\end{remark}

It is also possible to show that various natural approximation schemes converge to the 
dynamical $\Phi^4_3$ model. Take for example a function $\theta \mapsto V_\theta$ taking values
in the space of even polynomials on $\R$ of some fixed degree $2m$ and consider the
equation
\begin{equ}[e:modelV]
\d_t \Phi_{\eps,\theta} = \Delta \Phi_{\eps,\theta} - \eps^{-3/2} V_\theta'(\sqrt \eps \Phi_{\eps,\theta}) + \xi_\eps \;,
\end{equ}
with $\xi_\eps$ as above. This particular scaling arises naturally when rescaling a weakly nonlinear
model, see \cite{Weijun} for more details. Denote by $\mu = \CN(0,C)$ the centred Gaussian measure with
covariance
\begin{equ}[e:defCC]
C = \|\rho \star P\|_{L^2}^2\;, 
\end{equ}
where $P$ denotes the $3$-dimensional heat kernel and $\rho$ is the mollifier appearing in the
definition of $\xi_\eps$. Note that since $d = 3$, this quantity is indeed finite. (In dimensions 
$d \le 2$ the heat kernel fails to be square integrable at large scales.)
We then define the ``effective potential'' $\scal{V_\theta} = \mu \star V_\theta$ and
assume that $\theta \mapsto \scal{V_\theta}$ exhibits a pitchfork bifurcation at the origin
at $\theta = 0$. We also normalise the solution in such a way that
$\scal{V_0}^{(4)}(0) = 6$, which guarantees that $\scal{V_0}'(u) = u^3 + \CO(u^5)$ for $u \ll 1$. 
The main result of \cite{Weijun}, which builds on the analogous results
obtained in \cite{Jeremy} for the KPZ equation then reads

\begin{theorem}\label{theo:universal}
In the above setting, there exist values $a > 0$ and $b \in \R$, such that if one sets 
$\theta = a \eps \log \eps + b \eps$, then the solution to \eqref{e:modelV} converges
as $\eps \to 0$ to the process $\Phi$ built in Theorem~\ref{theo:construction}.
\end{theorem}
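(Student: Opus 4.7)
The plan is to realise \eqref{e:modelV} as a perturbation of the dynamical $\Phi^4_3$ equation already treated in Theorem~\ref{theo:construction}, working in an enlarged regularity structure that accommodates all the extra powers appearing in $V_\theta'$. The key analytic fact is that, once the nonlinearity is expanded into Wick powers with respect to the measure $\mu$, only the linear (mass) and cubic contributions survive in the limit $\eps\to 0$; the remaining terms carry a small prefactor $\eps^{(n-3)/2}$ and should be shown to disappear. The parameter $\theta$ is then tuned so that the microscopic mass coming from the linear Wick contribution exactly matches the divergent counter-term $C_\eps$ of Theorem~\ref{theo:construction}.

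First I would expand the nonlinearity via the Wick identity $f(Z) = \sum_n (\mu_\eps\star f)^{(n)}(0)\,\Wick{Z^n}/n!$, applied to $f=V_\theta'$ and $Z=\sqrt\eps\,\Phi_{\eps,\theta}$, where the Wick ordering is taken with respect to the covariance $\sigma_\eps^2 = \E(\sqrt\eps X_\eps(0,0))^2 = C + O(\eps)$ of the mollified solution of the stochastic heat equation. This gives
\begin{equation*}
\eps^{-3/2}V_\theta'(\sqrt\eps\,\Phi_{\eps,\theta}) \;=\; \sum_{n\ge 0}\frac{\scal{V_\theta}^{(n+1)}(0)}{n!}\;\eps^{(n-3)/2}\;\Wick{\Phi_{\eps,\theta}^{\,n}}.
\end{equation*}
Since $V_\theta$ is even, $\scal{V_\theta}^{(n+1)}(0)=0$ for every even $n$, so the only surviving terms are: the $n=1$ term giving $\eps^{-1}\scal{V_\theta}''(0)\,\Phi_{\eps,\theta}$; the $n=3$ term giving $\tfrac{1}{6}\scal{V_\theta}^{(4)}(0)\,\Wick{\Phi_{\eps,\theta}^3}$, whose coefficient tends to $1$ by the normalisation; and terms $n\ge 5$ with vanishing prefactor. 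The pitchfork condition yields the expansion $\scal{V_\theta}''(0) = -\gamma\theta + O(\theta^2)$, so with $\theta=a\eps\log\eps+b\eps$ the microscopic mass behaves as $\eps^{-1}\scal{V_\theta}''(0) = -\gamma a\log\eps-\gamma b+o(1)$.

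Next I would enlarge the regularity structure of Theorem~\ref{theo:construction} by adjoining abstract symbols $\Xi_n$ corresponding to each Wick power $\Wick{\Phi^n}$ for odd $5\le n\le 2m-1$, lift \eqref{e:modelV} to a fixed point problem in this enlarged setting, and build for each $\eps>0$ the canonical BPHZ-renormalised model. The bulk of the work is to show that all tree integrals converge: the symbols already present in the $\Phi^4_3$ structure should converge to the limit model of Theorem~\ref{theo:construction}, while for each new symbol $\Xi_n$ the prefactor $\eps^{(n-3)/2}$ should beat the divergences produced by the stochastic estimates on the renormalised integrals. Once convergence of the model is established, continuity of the abstract solution map immediately reduces the theorem to a bookkeeping computation: the lifted renormalised equation has a mass coefficient equal to $-\eps^{-1}\scal{V_\theta}''(0)+\kappa C_\eps+R_\eps$, where $\kappa C_\eps$ absorbs the Wick counter-term associated with the cubic vertex (together with the second-order ``sunset'' renormalisation specific to $\Phi^4_3$) and $R_\eps$ collects the subleading contributions generated by the extra symbols. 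Choosing $a$ so that $\gamma a$ cancels the total coefficient of $\log\eps$, and $b$ so that $\gamma b$ fixes the finite part, reproduces exactly the counter-term of Theorem~\ref{theo:construction}, and the stated convergence follows.

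The main obstacle is the second step, namely the construction and convergence of the renormalised model on the enlarged regularity structure. The new symbols $\Xi_n$ for $n\ge 5$ correspond to vertices that are critical or even supercritical in the naive power-counting of the three-dimensional theory, so they can generate subdivergences that are not present in $\Phi^4_3$; proving that these subdivergences can still be handled by an appropriate BPHZ prescription, and that the surviving small prefactor $\eps^{(n-3)/2}$ is not eaten by inverse powers of $\eps$ coming from inner renormalisations, is the technical heart of the argument. The correct framework for doing so is the extension of the analysis of \cite{Jeremy} (for the KPZ analogue of this statement) carried out in \cite{Weijun}, where uniform-in-$\eps$ moment bounds on all the relevant trees are established and the residual higher-order contributions are shown to enter only in the finite part of the mass counter-term, hence merely shifting the value of $b$.
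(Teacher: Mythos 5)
Your high-level strategy is right — expand $V_\theta'$ in Wick powers with respect to the local variance $C/\eps$ so that the coefficient of the cubic term is governed by the effective potential $\scal{V_\theta}$, enlarge the regularity structure, prove convergence of renormalised models, then do the mass bookkeeping to read off $a$ and $b$. The Wick identity and the conclusion that only $\tilde C_1$ (order $\eps^{-1}$) and $\tilde C_2$ (order $\log\eps$) diverge, with the quintic contributions only shifting the finite part, agree with the paper. However, the specific mechanism you propose for the enlarged structure — adjoining abstract noise symbols $\Xi_n$ ``corresponding to each Wick power $\Wick{\Phi^n}$'' — is not viable, and this is precisely the point at which the paper introduces its key technical device.

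The difficulty is twofold, and the paper spells it out explicitly. First, $\Wick{\Phi^n}$ is not a forcing term but a nonlinear function of the unknown, so it cannot be encoded as a new basic symbol; it has to be generated recursively from $\CI(\Xi)$. If one naively allows the recursion rule $\tau_i\in\CU \Rightarrow \CI(\tau_1\cdots\tau_5)\in\CU$, the resulting set of symbols contains, for every $\gamma$, infinitely many of homogeneity at most $\gamma$, which breaks the whole $\CD^\gamma$ machinery. Second, keeping the factor $\eps^{(n-3)/2}$ as a coefficient in the fixed point equation produces an $\eps$-dependent abstract equation, and the entire point of the construction is to have a single $\eps$-independent fixed point map, with all $\eps$-dependence pushed into the sequence of models. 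Your formulation would leave you with a continuity-in-$\eps$ problem for the solution map, which is exactly the problem you want to avoid (and which cannot be solved by continuity alone, since the quintic term genuinely alters the limit). The device that resolves both issues, introduced in \cite{Jeremy} and extended in \cite{Weijun}, is an additional abstract operator $\CE$ of homogeneity $+1$ representing ``multiplication by $\eps$'': the recursion becomes $\CI(\tau_1\tau_2\tau_3)\in\CU$ and $\CI(\CE(\tau_1\cdots\tau_5))\in\CU$, the fixed point problem becomes $\Phi = \CP\one_{t>0}(\Xi - \Phi^3 - a\,\hat\CE\,\Phi^5)+P\Phi_0$, the canonical lift $\LL_\eps$ realises $\CE$ as multiplication by $\eps$ (with appropriate Taylor corrections), and $\hat\CE$ is shown to map $\CD^\gamma$ into $\CD^{(\gamma+1)\wedge\delta}$ so that the new recursion produces only finitely many symbols below any fixed homogeneity. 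Your proof proposal is missing this structural ingredient entirely, and without it the ``lift to a fixed point in the enlarged structure'' step does not go through.

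A second, smaller gap: once $\CE$ is in place the renormalisation group also has to be enlarged (the paper needs two additional generators $\tilde L_3, \tilde L_4$ acting only on $\CE$-containing trees), and the statement one must actually prove is Theorem~\ref{theo:convIndep}: the renormalised models $\tilde M_{c,\eps}\LL_{c\eps}(\xi_\eps)$ converge on the \emph{restricted} structure $(T,\CG)$ (not on the extended one) to a limit independent of $c$. Your outline collapses this into ``build the BPHZ model and show convergence,'' which glosses over the fact that convergence fails on $(T_\ex,\CG)$ and only holds after restriction; that restriction is what kills the residual $\eps^{(n-3)/2}$ contributions you want to see vanish.
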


\begin{remark}
Concerning the initial condition, one needs to consider a sequence $\Phi_0^{(\eps)}$ 
of smooth initial conditions converging to $\Phi_0 \in \CC^\alpha$ with 
$-{1\over 2} - {1\over 4m} < \alpha < -{1\over 2}$ in a suitable sense.
This is because if we chose a fixed initial condition in $\CC^{-{1/ 2}}$ say, 
\eqref{e:modelV} may fail to even possess local solutions for fixed $\eps > 0$.
\end{remark}

We can also consider approximations of the type given in Theorem~\ref{theo:construction}, but with
$\xi_\eps$ a non-Gaussian approximation to white noise. In this case, one typically
has to add additional counterterms in order to obtain the same limit. Consider for example
an approximation of the type
\begin{equ}[e:defxieps]
\xi_\eps(t,x) = \eps^{-5/2}\eta(t/\eps^2, x/\eps)\;,
\end{equ}
for a stationary process $\eta$ on $\R \times \R^3$ which admits moments
of all orders and has finite dependence in the sense that $\sigma_K$ and $\sigma_{\bar K}$
are independent whenever
\begin{equ}
\inf_{z\in K \atop \bar z \in \bar K} |z-\bar z| \ge 1\;.
\end{equ}
Here, we wrote $\sigma_K$ for the $\sigma$-algebra generated by all evaluation maps
 $\eta(z)$ for $z \in K$.\footnote{Since we always consider equations on bounded tori,
we actually need to consider a suitably periodised version of $\xi_\eps$, 
see \cite[Assumption~2.1]{HaoShen} for more details.}
It is then possible to show that there exist suitable choices of constants $C_\eps^{(i)}$
such that solutions to 
\begin{equ}[e:solNonGauss]
\d_t \Phi_\eps = \Delta \Phi_\eps + C_\eps^{(1)} + C_\eps^{(2)} \Phi_\eps - \Phi_\eps^3 + \xi_\eps \;,
\end{equ}
converge as $\eps \to 0$ to the process $\Phi$ built in Theorem~\ref{theo:construction}.
Details of the proof can be found in \cite{HaoShen} for the case of the KPZ equation.
It is shown in \cite[Remark~6.6]{HaoShen} that these constants are of the type
\begin{equ}[e:constNonGauss]
C_\eps^{(1)} = {C^{(1,1)} \over \eps^{3/2}} + {C^{(1,2)} \over \eps^{1/2}}\;,\qquad
C_\eps^{(2)} = {C^{(2,1)} \over \eps} + c \log \eps + C^{(2,2)}\;,
\end{equ}
where the $C^{(i,j)}$ depend on the details of the process $\eta$, but $c$ is universal.
We will provide a short sketch of the argument showing how these constants appear in
Section~\ref{sec:CLT} below.

Another very nice way in which the dynamical $\Phi^4$ model appears is
as the crossover regime for the Glauber dynamic in an Ising-Kac model \cite{HendrikIsing}.
Here, one considers an Ising model, but with the usual nearest-neighbour Hamiltonian replaced by 
a Hamiltonian of the type
\begin{equ}
H = \sum_{x,y} K_\gamma(|x-y|) \sigma_x \sigma_x \;,
\end{equ}
where $K_\gamma(r) = \gamma^{-d} K(\gamma r)$ for some small value $\gamma$, and $K$ is a positive, smooth, compactly
supported function. On is then interested 
in the simultaneous limit $\gamma \to 0$ and $N \to \infty$ (where $N$ is the side length of 
a discrete torus on which $\sigma$ lives) at (or sufficiently near) the critical temperature.
In dimension $d = 2$, it was shown in \cite{HendrikIsing} that by suitably choosing $N$ and the 
inverse temperature as a function of $\gamma$ and considering the Glauber dynamic on suitable long time scales,
one recovers the dynamical $\Phi^4_2$ model in the limit. A similar result is of course conjectured to hold for
$d=3$. We will not consider any discrete approximation of this type in these notes but refer instead to
\cite{AjayHendrik} for a recent review of the techniques involved in the proof of such a result. 

One of the main insights used in the proof of the type of results mentioned in this introduction 
is that while solutions to equations like \eqref{e:KPZ} or \eqref{e:Phi4}
may appear to be very rough (indeed, they are not even functions in the case of \eqref{e:Phi4}), they
can actually be considered to be smooth, provided that one looks at them in the right way. 
To understand what we mean by this, it is worth revisiting the very notion of ``regularity''. 
The way we usually measure regularity is by asking how well a given function can be approximated by polynomials.
More precisely, we say that a function $F\colon \R^d \to \R$ is of class $\CC^\gamma$ with $\gamma > 0$ 
if, for every
point $x \in \R^d$ we can find a polynomial $P_x$ of degree $\lfloor \gamma\rfloor$ such that
$|F(y) - P_x(y)| \lesssim |y-x|^\gamma$ for $y$ close to $x$. The idea now is to consider spaces
of ``regular'' functions / distributions, but where ``regularity'' is measured by local proximity not
to polynomials, but to linear combinations of some other ``basis functions'' that are specific 
to the problem at hand. In particular, the objects that play the role of polynomials are allowed
to be random themselves, and they are also allowed to be distributions rather than just functions. 
One might now hope (and this is indeed the case) that if we are given a consistent product rule on
these basic objects and if we have two distributions that are sufficiently ``smooth'' in the sense that
they are locally described by linear combinations of these objects (a ``local specification''), 
then there exists a unique distribution, which we interpret as the product, whose local specification
is given by the products of the local specifications of each factor. This then allows to formulate 
equations like \eqref{e:KPZ} or \eqref{e:Phi4} as fixed point problems in some of these spaces
of ``smooth'' functions / distributions, and to build a local solution theory with many 
nice properties very similar to those one would have for the corresponding deterministic problems
with smooth inputs.

The remainder of these notes is organised as follows. First, in Section~\ref{sec:Holder}, we
provide a fresh look at the definitions of the classical spaces $\CC^\gamma$ and we
describe a natural generalisation, as well as a situation where such a generalisation is useful.
In Section~\ref{sec:defs}, we then give the general definition of a regularity structure
and of the associated analogues to the spaces $\CC^\gamma$. We also formulate the reconstruction 
theorem which is fundamental to the theory and we give a simple application.
We then provide some operational tools in Sections~\ref{sec:prod} and \ref{sec:Schauder}.
Finally, we show in Sections~\ref{sec:app} and \ref{sec:Phi} how to apply the theory to
stochastic PDEs in general and the dynamical $\Phi^4_3$ model in particular.

\subsection*{Acknowledgements}

{\small
Financial support from the Leverhulme trust through a leadership award
and from the ERC through a consolidator award is gratefully acknowledged.
}

\section{Another look at smooth functions}
\label{sec:Holder}

Before we turn to a more precise description of how this idea of replacing Taylor polynomials
by purpose-built functions / distributions is implemented in practice, let us
first have a slightly different look at the definition of the classical H\"older spaces. For the
sake of this discussion, let us fix some exponent $\gamma \in (1,2)$. The space $\CC^\gamma(S^1)$ then
consists of those functions $f$ that are continuously differentiable and such that their
derivative $f'$ satisfies $|f'(t) - f'(s)| \lesssim |t-s|^{\gamma-1}$. A natural seminorm on
$\CC^\gamma$ is then given by
\begin{equ}[e:normalphabasic]
\|f\|_\gamma = \sup_{s\neq t} {|f'(t) - f'(s)| \over |t-s|^{\gamma-1}}\;,
\end{equ}
where $f'$ denotes the derivative of $f$. This definition is more complicated than
it may appear at first sight since it is given in terms of $f'$, rather than the function $f$ itself, 
and $f'$ has to be computed first. Instead, we would like to consider an element of $\CC^\gamma$ as
a \textit{pair} of functions $(f,f')$, endowed with the norm
\begin{equ}[e:normalpha]
\|(f,f')\|_\gamma = \sup_{s\neq t} {|f'(t) - f'(s)| \over |t-s|^{\gamma-1}} \vee \sup_{s\neq t} 
{|f(t) - f(s) - f'(s)(t-s)| \over |t-s|^{\gamma}}\;,
\end{equ}
where $a \vee b$ denotes the maximum of $a$ and $b$.
It is not difficult to verify that if $f'$ is the derivative of $f$, 
then as a consequence of the identity
\begin{equ}
f(t) - f(s) - f'(s)(t-s) = \int_s^t (f'(r) - f'(s))\,dr\;,
\end{equ}
the seminorm \eqref{e:normalpha} is indeed
equivalent to \eqref{e:normalphabasic}. Furthermore, the finiteness
of the second part of the expression \eqref{e:normalpha} \textit{forces} $f$ to be 
differentiable with derivative $f'$, since 
otherwise that supremum would be infinite. 
In this point of view, we therefore do not have to impose \textit{a priori} that $f'$ is the
derivative of $f$, this simply turns out to be a side-effect of having $\|(f,f')\|_\gamma < \infty$.
Furthermore, viewing the pair $(f,f')$ as our primary data has the advantage that we do not need
to compute any additional data (the derivative) 
from it in order to express its norm.
At this stage, one may worry that the notation quickly becomes cumbersome when considering
higher degrees of regularity. Indeed, for $\gamma \in (2,3)$ we could come up with a similar norm,
but this time we would have to consider triples $(f,f',f'')$, etc. Instead, we prefer to rewrite 
an element of $\CC^\gamma$ as a single function, but taking values in $\R \oplus \R \approx \R^2$, 
with the first component equal to $f$ and the second component equal to $f'$. In order to be able to easily
distinguish these components, we denote by $\1$ the basis vector corresponding to the first component
and by $\X$ the basis vector for the second component, so that our pair $(f,f')$ can be written
as
\begin{equ}
F(t) = f(t)\1 + f'(t)\X\;.
\end{equ}
At this stage, this is nothing but a change of notation.
However, this notation already suggests a very natural 
product rule for elements in $\CC^\gamma$: 
we \textit{postulate} that $\1 \cdot \1 = \1$, $\1\cdot \X = \X\cdot \1 = \X$, and $\X\cdot\X = 0$,
and we define $\bigl(F\cdot G\bigr)(t) = F(t)\cdot G(t)$. With this definition, one has
\begin{equ}
\bigl(F\cdot G\bigr)(t) = f(t)g(t)\1 + \bigl(f'(t)g(t) + f(t)g'(t)\bigr)\X\;,
\end{equ}
i.e.\ the component multiplying $\X$ is ``automatically'' given by what the Leibniz rule
suggests should be its correct expression. 

Each of the basis vectors $\tau \in \{\1, \X\}$ also comes with a natural ``degree''
(or ``homogeneity'') $|\tau|$ given by $|\1| = 0$ and $|\X| = 1$. 
There is an analytical meaning to this homogeneity: the vector $\1$ also quite
naturally represents the constant function $1$ and the vector $\X$ represents the
monomial of degree $1$ in a Taylor expansion around some base point. In this way,
we can view $F$ as a function taking values in the space of Taylor polynomials of
degree $1$.
This suggests the introduction
of a family of linear maps $\{\Pi_s\}_{s \in S^1}$ which associate to each basis
vector the corresponding Taylor monomial:
\begin{equ}[e:defPi]
\bigl(\Pi_s \1\bigr)(t) = 1\;,\qquad
\bigl(\Pi_s \X\bigr)(t) = t-s\;.
\end{equ}
With this notation, given $F \in \CC^\gamma$, the function $t \mapsto \bigl(\Pi_s F(s)\bigr)(t)$
is nothing but its first-order Taylor expansion at $s$. Furthermore, 
the degree of a vector $\tau \in \{\1,\X\}$ is precisely the order at which the map
$t \mapsto \bigl(\Pi_s\tau\bigr)(t)$ vanishes near $t = s$. The maps $\Pi_s$ yield an easy
way to recover the ``actual'' real-valued function described by the vector-valued function $F$:
setting
\begin{equ}[e:RFsimple]
\bigl(\CR F\bigr)(t) \eqdef \bigl(\Pi_t F(t)\bigr)(t)\;,
\end{equ}
we see that in this case $\CR F$ is nothing but the component of $F$ multiplying $\1$, which
is indeed the real-valued function $f$ represented by $F$.
One crucial property of the maps $\Pi_s$ is that varying $s$ can also be achieved
by composing them with an adequate linear transformation. Indeed, setting
\begin{equ}[e:basicGamma]
\Gamma_{st} \1 = \1\;,\qquad
\Gamma_{st} \X = \X + (s-t)\1\;,
\end{equ}
it is straightforward to verify that one has the identities
\begin{equ}[e:algebraic]
\Pi_t = \Pi_s \Gamma_{st} \;,\qquad \Gamma_{st}\Gamma_{tu} = \Gamma_{su}\;.
\end{equ}

With this notation,
and making a slight abuse of notation by also using $\{\1,\X\}$ for the dual basis, 
the norm \eqref{e:normalpha} can be rewritten in a more concise and much
more natural way as 
\begin{equ}[e:defalphaNorm]
\|F\|_\gamma = \sup_{\tau \in \{\1,\X\}} {|\scal{\tau,F(t) - \Gamma_{ts}F(s)}| \over |t-s|^{\gamma-|\tau|}}\;.
\end{equ}
It is now immediate to generalise these definitions to 
the case of $\gamma \in \R_+ \setminus \N$ by introducing additional basis vectors
$\X^k$, postulating that $\X^0 = \1$ and $\X^k \cdot \X^\ell = \X^{k+\ell}$, 
and extending the definition of the maps $\Gamma_{st}$ to these additional vectors
by imposing that it satisfies the multiplicative property
\begin{equ}
\Gamma_{st}(\tau \bar \tau) = (\Gamma_{st}\tau)(\Gamma_{st} \bar \tau)\;.
\end{equ}
One can then verify that \eqref{e:algebraic} still holds provided that 
one sets $\bigl(\Pi_s \X^k\bigr)(t) = (t-s)^k$.
This is a consequence of the fact that first changing the base point $s$ 
and then multiplying two monomials is the same as first multiplying them
and then changing the base point.

Note now that the definition of the seminorm 
\eqref{e:defalphaNorm} is extremely robust and does not refer anymore to 
any of the details that make a Taylor polynomial a polynomial. 
For example, for $\alpha \in (0,1)$ we could fix an $\alpha$-H\"older 
continuous (in the usual sense) function $W$ and, instead of \eqref{e:defPi}, we could
set
\begin{equ}
\bigl(\Pi_s \1\bigr)(t) = 1\;,\qquad
\bigl(\Pi_s \X\bigr)(t) = W(t)-W(s)\;.
\end{equ}
Since $\bigl(\Pi_s \X\bigr)(t)$ now only vanishes at order $\alpha$ near $t=s$,
this strongly suggests that we should set $|\X| = \alpha$. 
The identity \eqref{e:algebraic} is still satisfied if we change the definition
of $\Gamma_{st}$ into
\begin{equ}[e:gammaalt]
\Gamma_{st} \1 = \1\;,\qquad
\Gamma_{st} \X = \X + \bigl(W(s)-W(t)\bigr)\1\;.
\end{equ}
With these definitions, the norm \eqref{e:defalphaNorm} is still very natural, 
but its meaning now is that, setting $F(t) = f(t) \1 + f'(t)\X$ as before, 
the function $f'$ is H\"older continuous of order $\gamma-\alpha$ and the function
$f$ is such that
\begin{equ}[e:idenf]
f(t) = f(s) + f'(s) \bigl(W(t) - W(s)\bigr) + \CO(|t-s|^\gamma)\;.
\end{equ}
In particular, if $\gamma > \alpha$, it no longer implies means that $f$ is 
of class $\CC^\gamma$, only that its increments ``look like'' some multiple of 
those of $W$, up to a remainder of order $\gamma$.
Furthermore, $f'$ is now no longer equal to the derivative of $f$. 

\begin{remark}\label{rem:unique}
Consider the interesting case $0 < \alpha < \gamma < 1$ for the situation just described. Then it is 
no longer necessarily true that the ``derivative'' $f'$ is uniquely determined
by $f$ if all we know is that $\|F\|_\gamma < \infty$. If for example we take $W(t) = \cos(t)$
which is certainly $\alpha$-H\"older continuous, although it is of course much
more than that, then the identity \eqref{e:idenf} simply forces 
$f$ to be $\gamma$-H\"older continuous and puts no restriction whatsoever on $f'$.
If, on the other hand, $W$ happens to be ``nowhere $\gamma$-H\"older'' in the sense
that for every $s \in S^1$ one can find a sequence $t_n \to s$ such that
$|W(t_n) - W(s)|/|t_n -s|^\gamma \to \infty$, then \eqref{e:idenf} determines $f'$
uniquely. A quantitative version of this statement, together with an application,
can be found in \cite{Natesh}. 
\end{remark}

The functions of class ``$\CC^\gamma$ with respect to some function $W$'' just described in \eqref{e:idenf}
play a prominent role in the 
theory of controlled rough paths \cite{Lyons,Max,LyonsStFlour}, so let us see how these
definitions are useful there.
The setting is the following: we want to provide a robust solution theory for a
controlled differential equation of the type 
\begin{equ}[e:SDE]
dY = f(Y)\,dW(t)\;,
\end{equ}
 where $W \in \CC^\alpha$ is a rather rough function
(say a typical sample path for an $m$-dimensional Brownian motion).
In general, we allow for $W$ to take values in $\R^m$ and $Y$ in $\R^n$ for
arbitrary integers $n$ and $m$, so we cannot solve \eqref{e:SDE} by
simply setting $Y(t) = Z(W(t))$ with 
$\bar Y$ the solution to the ODE $\dot Z = f(Z)$.
It is a classical result by Young \cite{Young} that the Riemann-Stieltjes
integral $(Y,W) \mapsto \int_0^\cdot Y\,dW$ makes sense as a continuous map 
from $\CC^\alpha \times \CC^\alpha$
into $\CC^\alpha$ if and only if $\alpha > {1\over 2}$. 
As a consequence, ``na\"\i ve'' approaches to 
a pathwise solution to \eref{e:SDE} with $\int_0^\cdot f(Y)\,dW$
interpreted in Young's sense are bound to fail if $W$ has the regularity of Brownian 
motion, since the modulus of continuity $\omega(h)$
of the latter behaves no better than $\omega(h) = \sqrt{2h \log 1/h}$.
A fortiori, it will fail for example if $W$ is a typical sample 
path of fractional Brownian motion with Hurst parameter $H < {1\over 2}$.

In order to break through this barrier, the main idea is to exploit the a priori ``guess'' that solutions to \eref{e:SDE} should ``look like $W$ at small scales'', i.e.\ in order to try to define an integral $\int_0^\cdot Z\,dW$ for any two
elements $Z$ and $W$ of some function space, we only consider those $Z$'s
that could plausibly arise as $Z = f(Y)$ for $Y$ the solution to \eqref{e:SDE}.
More precisely, one would naturally expect the solution $Y$ to satisfy 
\begin{equ}[e:expY]
Y_t = Y_s + Y'_s W_{s,t} + \CO(|t-s|^{2\alpha})\;,
\end{equ}
for some function $Y'$,
where we wrote $W_{s,t}$ as a shorthand for the increment $W_t - W_s$. 
As a matter of fact, one would
expect to have such an expansion with the specific choice $Y' = f(Y)$. 
As a consequence, one would also expect a similar bound to \eqref{e:expY}
for $Z = f(Y)$ with $Z' = f'(Y)$.
In other words, one would make the \textit{a priori} guess that solutions to \eqref{e:SDE}
satisfy $\|Y\1 + Y'\X\|_{2\alpha} < \infty$, provided that we define
$\|\cdot\|_{2\alpha}$ as in \eqref{e:defalphaNorm} with $|\X| = \alpha$ and
$\Gamma_{st}$ as in \eqref{e:gammaalt} (with the caveat that in the 
multidimensional case one should now introduce symbols $\X_i$ for
$i\in\{1,\ldots,m\}$, interpret \eqref{e:gammaalt} as a ``vector'' identity,
take $Y'$ matrix-valued, etc).

It now remains to provide a coherent construction of the integral 
$\int Z\,dW$ for such functions (or rather pairs of functions $(Z,Z')$).
The main idea is to then simply \textit{postulate} the values of the
integrals 
\begin{equ}[e:defXX]
\WW_{s,t} =: \int_s^t W_{s,r}\otimes dW_r\;,
\end{equ}
instead of trying to compute them from $W$ (which is doomed to failure). For a
two-parameter function $\WW$ to be a ``reasonable'' candidate for the right hand side
of \eqref{e:defXX}, one would like it to satisfy Chen's relations
\begin{equ}[e:constr]
\WW_{s,t} - \WW_{s,u} -\WW_{u,t} =  W_{s,u}\otimes W_{u,t}\;,
\end{equ}
since these follow from the requirements that 
$\int_s^t c\,dW_r = c W_{s,t}$ for any $c,s,t \in \R$ and that the sum of two
integrals with the same integrand over adjacent intervals equals the integral over the
union of these intervals.
By simple scaling, it is also natural to impose the analytic 
bound 
\begin{equ}[e:analytic]
|\WW_{s,t}| \lesssim |t-s|^{2\alpha}\;.
\end{equ}
One can then exploit this additional data to give a coherent definition of expressions 
of the type $\int Z\,dW$,
provided that the path $W$ is ``enhanced'' with its iterated integrals $\WW$ and 
$Z$ is a ``controlled path''
of the type \eref{e:expY}. Indeed, it suffices to set
\begin{equ}
\int_0^t Z_s\,dW_s \eqdef \lim_{|\CP| \to 0} \sum_{[s,u] \in \CP} \bigl(Z_s\,W_{s,u}
+ Z'_s\, \WW_{s,u}\bigr)\;,
\end{equ}
where $\CP$ denotes a partition of $[0,t]$ (interpreted as a collection of closed
intervals) and $|\CP|$ denotes the length of the longest interval in $\CP$.
It is a fact that the analytical bounds given above, together with \eqref{e:constr},
guarantee that this limit exists as soon as $\alpha > {1\over 3}$ and that,
if we set 
\begin{equ}
Y_t = \int_0^t Z_s\,dW_s \1 + Z_t\X\;,
\end{equ}
then one has again $\|Y\|_{2\alpha} < \infty$, thus allowing to formulate \eqref{e:SDE}
as a well-behaved fixed point problem.
See for example \cite{Max} or the lecture notes
\cite{Book} for a more detailed exposition.

\section{The basic theory of regularity structures}
\label{sec:defs}

Let us now set up in more detail a general framework in which one can define
``H\"older-type'' spaces as above.
Our first ingredient is a vector space $T$ that contains the
coefficients of our ``Taylor-like'' expansion at each point. In the previous example,
this space was given by the linear span of $\1$ and $\X$. In general, it is natural to 
postulate that $T$ is an arbitrary graded vector space $T = \bigoplus_{\alpha \in A} T_\alpha$,
for some set $A$ of possible ``homogeneities''. For example, in the case of the usual Taylor expansion,
it is natural to take for $A$ the set of natural numbers and to have $T_\ell$ contain the coefficients corresponding
to the derivatives of order $\ell$. In the case of controlled rough paths however, 
we have already seen that it is natural
to take $A = \{0,\alpha\}$, to have again $T_0$ contain the value of the function 
$Y$ at any time $s$,
and to have $T_\alpha$ contain its ``Gubinelli derivative'' $Y'_s$. This reflects the fact that in that case the 
``monomial'' $t \mapsto (\Pi_s \X)(t) = W_{s,t}$ only vanishes at order $\alpha$ near $t = s$, while the usual monomials $t \mapsto (t-s)^\ell$
vanish at integer order $\ell$. In general, we only assume that each $T_\alpha$ is a real Banach space,
although in many examples of interest these spaces will be finite-dimensional.

This however isn't the full algebraic structure describing Taylor-like expansions. Indeed, 
we have already seen that a crucial characteristic of
Taylor expansions is that an expansion around some point $x_0$ can be 
re-expanded around any other point $x_1$, namely simply by writing
\begin{equ}[e:TaylorExp]
(x-x_0)^m = \sum_{k+\ell = m} \binom{m}{k} (x_1 - x_0)^k\cdot (x-x_1)^\ell\;.
\end{equ}
(In the case when $x \in \R^d$, $k$, $\ell$ and $m$ denote multi-indices and $k! = k_1!\ldots k_d!$.) 
In general, we have seen in both of our examples that there are linear maps $\Gamma_{st}$
transforming the coefficients of an expansion around $t$ into the coefficients of the 
same ``polynomial'', but this time expanded around $s$.

What is a natural abstraction of this fact?
In view of the above examples, it is natural to impose that any such ``reexpansion map''
$\Gamma_{st}$ has the property that
if $\tau \in T_\alpha$, then $\Gamma_{st} \tau - \tau \in \bigoplus_{\beta < \alpha} T_\beta
=: T_{<\alpha}$. In other words, 
when reexpanding a homogeneous monomial around a different point, 
the leading order coefficient remains 
the same, but lower order monomials may appear. Furthermore, one should be able to compose reexpansions,
since taking an expansion around $t$, reexpanding it around $s$ and then reexpanding the result
around a third point $r$ should be the same as reexpanding the first expansion around $r$.
In other words, it seems natural that one has the identity $\Gamma_{rs}\Gamma_{st} = \Gamma_{rt}$,
which is indeed the case for the examples we have seen so far.
These considerations can be summarised in the following definition of an 
algebraic structure which we call a \textit{regularity structure}:

\begin{definition}\label{def:reg}
Let $A \subset \R$ be bounded from below and without accumulation point, and let 
$T = \bigoplus_{\alpha \in A} T_\alpha$ be a vector space graded by $A$ such that each $T_\alpha$ is a 
Banach space. Let furthermore $G$ be a group of continuous operators on $T$ such that, for every $\alpha \in A$,
every $\Gamma \in G$, and every $\tau \in T_\alpha$, one has $\Gamma \tau - \tau \in T_{<\alpha}$.
The triple $\TT=(A,T,G)$ is called a \textit{regularity structure} with \textit{model space} $T$ and \textit{structure group} $G$.
\end{definition}

\begin{remark}
In principle, the set $A$ can be infinite. By analogy with the polynomials, it is then natural to 
consider $T$ as the set of all formal series of the form $\sum_{\alpha \in A}\tau_\alpha$, where only
finitely many of the $\tau_\alpha$'s are non-zero, endowed with the topology of
term-wise convergence. This also dovetails nicely with the
particular form of elements in $G$. In practice however we will only ever work with finite subsets
of $A$ so that the precise topology on $T$ does not matter.
\end{remark}

A regularity structure as given by Definition~\ref{def:reg} is just a kind of algebraic ``skeleton'':
it does not require any underlying configuration space and it contains
no information as to which actual functions / distributions its elements
are supposed to describe. It only becomes useful for our purpose when endowed with
a \textit{model}, which is the analytical ``flesh'' associating to 
any $\tau \in T$ and $x_0 \in \R^d$, the actual
``Taylor polynomial based at $x_0$'' represented by $\tau$. In order to 
link the algebraic description to the corresponding analytical objects, 
we want elements $\tau \in T_\alpha$ to
represent functions (or possibly distributions!) that 
``vanish at order $\alpha$'' around the given point $x_0$.

Since we would like to allow elements in $T$ to represent
distributions and not just functions, we cannot evaluate them at points and thus need a suitable 
notion of ``vanishing at order $\alpha$''. We achieve this by controlling the
size of our distributions when tested against test functions that are localised in a small region around the given point $x_0$.
Given a test function $\phi$ on $\R^d$, we write $\phi_x^\lambda$ as a shorthand for
\begin{equ}
\phi_x^\lambda(y) = \lambda^{-d} \phi\bigl(\lambda^{-1}(y-x)\bigr)\;.
\end{equ}
Given $r >0$, we also denote by $\CB_r$ the set of all smooth 
functions $\phi \colon \R^d \to \R$ such that 
$\|\phi\|_{\CC^r} \le 1$, and that are furthermore supported in the unit ball around the origin.
With this notation, and writing furthermore $\CS'(\R^d)$ for the space
of distributions (not necessarily tempered) on $\R^d$,
our definition of a model for a given regularity structure $\TT$ is as follows.

\begin{definition}\label{def:model}
Given a regularity structure $\TT$ and an integer $d \ge 1$, a \textit{model} for $\TT$ on $\R^d$ consists
of maps
\begin{equs}[2]
\Pi \colon \R^d &\to \CL\bigl(T, \CS'(\R^d)\bigr)&\qquad \Gamma\colon \R^d \times \R^d &\to G\\
x &\mapsto \Pi_x&\quad (x,y) &\mapsto \Gamma_{xy}
\end{equs}
such that $\Gamma_{xy}\Gamma_{yz} = \Gamma_{xz}$ and $\Pi_x \Gamma_{xy} = \Pi_y$. Furthermore, given $r > |\inf A|$, 
for any compact set $\K \subset \R^d$ and constant $\gamma > 0$, we assume 
that there exists a constant $C$ such that the bounds
\begin{equ}[e:bounds]
\bigl|\bigl(\Pi_x \tau\bigr)(\phi_x^\lambda)\bigr| \le C \lambda^{|\tau|} \|\tau\|_\alpha\;,\qquad
\|\Gamma_{xy}\tau\|_\beta \le C |x-y|^{\alpha-\beta} \|\tau\|_\alpha\;,
\end{equ}
hold uniformly over $\phi \in \CB_r$, $(x,y) \in \K$, $\lambda \in (0,1]$, 
$\tau \in T_\alpha$ with $\alpha \le \gamma$, and $\beta < \alpha$.
We denote by $\MM$ the space of all models for a given regularity structure.
\end{definition}

\begin{remark}
Given $\tau \in T$, we wrote $\|\tau\|_\alpha$ for the norm of its component in $T_\alpha$.
In other words, if $\tau = \bigoplus_\alpha \tau_\alpha$ with $\tau_\alpha \in T_\alpha$,
then $\|\tau\|_\alpha = \|\tau_\alpha\|_{T_\alpha}$.
\end{remark}

\begin{remark}
The identity $\Pi_x \Gamma_{xy} = \Pi_y$ reflects the fact that $\Gamma_{xy}$ is the linear map that takes
an expansion around $y$ and turns it into an expansion around $x$. The first bound in \eref{e:bounds} states
what we mean precisely when we say that $\tau \in T_\alpha$ represents a term 
of order $\alpha$.
The second bound in \eref{e:bounds} is very natural in view of both \eref{e:basicGamma} and \eref{e:gammaalt}.
It states that when expanding a monomial of order $\alpha$ around a new point at distance $h$ from the old one,
the coefficient appearing in front of lower-order monomials of order $\beta$ is of order at most $h^{\alpha - \beta}$.
\end{remark}

\begin{remark}
In many cases of interest, it is natural to scale the different directions of $\R^d$ in a different way. 
This is the case for example when using the theory of regularity structures to build solution theories for
parabolic stochastic PDEs, in which case the time direction ``counts double''.  
To deal with such  a situation, one can introduce a scaling $\s$ of $\R^d$, which is just a collection of
$d$ mutually prime strictly positive integers and one defines $\phi_x^\lambda$ in such a  way that the $i$th
direction is scaled by $\lambda^{\s_i}$. In this case,  the Euclidean distance between two points should
be replaced everywhere by the corresponding scaled distance $|x|_\s = \sum_i |x_i|^{1/\s_i}$.
See also \cite{Regular} for more details.
\end{remark}

With these definitions at hand, it is then natural to define an equivalent in this context of the space
of $\gamma$-H\"older continuous functions in the following way, which is the natural
generalisation of \eqref{e:defalphaNorm}.

\begin{definition}
Given a regularity structure $\TT$ equipped with a model $(\Pi,\Gamma)$ over $\R^d$, 
the space $\D^\gamma = \D^\gamma(\TT,\Gamma)$ is given by the set of functions $f\colon \R^d \to \bigoplus_{\alpha < \gamma} T_\alpha$
such that, for every compact set $\K$ and every $\alpha < \gamma$, the exists a constant $C$ with
\begin{equ}[e:defHolder]
\|f(x) - \Gamma_{xy} f(y)\|_{\alpha} \le C |x-y|^{\gamma-\alpha}
\end{equ}
uniformly over $x,y \in \K$.
\end{definition}

\begin{remark}
Note that, given $\TT$ and a model $(\Pi,\Gamma) \in \MM$, the corresponding space
$\D^\gamma$ is a Fr\'echet space. However, as we have already seen in
Remark~\ref{rem:unique} above, this space does in general depend crucially on the choice
of model. 
We thus have a ``total space'' $\MM \ltimes \CD^\gamma$ containing all triples of the form
$(\Pi,\Gamma,F)$ with $F \in \CD^\gamma$ based on the model $(\Pi,\Gamma)$.
This space is no longer a linear space, but it still comes with a natural topology:
the distance between $(\Pi, \Gamma, f)$ and $(\bar \Pi, \bar \Gamma, \bar f)$ is given by
the smallest constant $\rho$ such that
\begin{equs}
\|f(x) - \bar f(x) - \Gamma_{xy} f(y) + \bar \Gamma_{xy} \bar f(y)\|_{\alpha} &\le \rho |x-y|^{\gamma-\alpha}\;,\\
\bigl|\bigl(\Pi_x \tau - \bar \Pi_x \tau\bigr)(\phi_x^\lambda)\bigr| &\le \rho \lambda^{\alpha} \|\tau\|\;,\\
\|\Gamma_{xy}\tau - \bar \Gamma_{xy}\tau\|_\beta &\le \rho |x-y|^{\alpha-\beta} \|\tau\|\;,
\end{equs}
uniformly for $x,y$ in some compact set.
\end{remark}

At this point, we should pause and ask the following question: given 
$f \in \D^\gamma$ for a given regularity structure and model, what is the actual
function / distribution represented by $f$? Recall that we have seen previously
in \eqref{e:RFsimple} that it should represent $\CR f$ given by
$(\CR f)(x) = \bigl(\Pi_x f(x)\bigr)(x)$. However, this definition now no longer 
makes sense since $\Pi_x f(x)$ is a distribution in general and can therefore
not be evaluated at $x$!
The most fundamental result in the theory of regularity structures then states that,
given $f \in \D^\gamma$ with $\gamma > 0$, there exists a \textit{unique} 
distribution $\CR f$ on $\R^d$ such that, for every $x \in \R^d$, $\CR f$ 
``looks like $\Pi_x f(x)$ near $x$''. More precisely, one has

\begin{theorem}\label{theo:reconstruction}
Let $\TT$ be a regularity structure as above and let $(\Pi,\Gamma) \in \MM$ be
a model for $\TT$ on $\R^d$.
Then, there exists a unique linear map $\CR\colon \D^\gamma \to \CS'(\R^d)$ such that
\begin{equ}[e:boundRf]
\bigl|\bigl(\CR f - \Pi_x f(x)\bigr)(\phi_x^\lambda)\bigr| \lesssim \lambda^\gamma\;,
\end{equ}
uniformly over $\phi \in \CB_r$ and $\lambda$ as before, and locally uniformly in $x$.
\end{theorem}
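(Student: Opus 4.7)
The plan is to establish uniqueness via a short mollification argument, and then to construct $\CR f$ as the limit of explicit local approximations built from the local specifications $\Pi_x f(x)$. All bounds will be understood to hold locally uniformly in the base point.

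For uniqueness, suppose that $\CR_1 f$ and $\CR_2 f$ both satisfy \eqref{e:boundRf} and set $\rho \eqdef \CR_1 f - \CR_2 f$. Subtracting the two instances of \eqref{e:boundRf} cancels the $\Pi_x f(x)$ term, so the smooth function $g_\lambda(y) \eqdef \rho(\phi_y^\lambda)$ satisfies $|g_\lambda(y)| \lesssim \lambda^\gamma$ locally uniformly in $y$, for any fixed $\phi \in \CB_r$. Choose such a $\phi$ with $\int \phi = 1$. For any test function $\eta \in C_c^\infty(\R^d)$, Fubini gives
\[
\rho(\eta * \phi^\lambda) \;=\; \int \eta(y)\, g_\lambda(y)\,dy \;=\; O(\lambda^\gamma),
\]
while the left hand side converges to $\rho(\eta)$ as $\lambda \to 0$ by the standard mollifier property together with continuity of the distribution $\rho$. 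Since $\gamma > 0$, one concludes that $\rho(\eta) = 0$ for every $\eta$, hence $\rho = 0$.

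For existence, fix a smooth dyadic partition of unity: for each $n \ge 0$, a family $\{\chi_x^{(n)}\}_{x \in 2^{-n}\ZZ^d}$ summing to $1$, each $\chi_x^{(n)}$ supported in a ball of radius $O(2^{-n})$ around $x$, with $\|\chi_x^{(n)}\|_{C^r} \lesssim 2^{nr}$. Define
\[
\CR_n f \;\eqdef\; \sum_{x \in 2^{-n}\ZZ^d} \chi_x^{(n)} \cdot \bigl(\Pi_x f(x)\bigr),
\]
which is a bona fide distribution since the sum is locally finite and each summand is the product of a distribution with a smooth function. I would then show that $\CR_n f$ is Cauchy in $\CS'(\R^d)$ and that the limit $\CR f \eqdef \lim_n \CR_n f$ satisfies \eqref{e:boundRf}.

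The heart of the matter is the comparison of $\Pi_x f(x)$ with $\Pi_y f(y)$ at nearby points. Using $\Pi_y = \Pi_x \Gamma_{xy}$,
\[
\Pi_x f(x) - \Pi_y f(y) \;=\; \Pi_x\bigl(f(x) - \Gamma_{xy} f(y)\bigr),
\]
so combining the first bound in \eqref{e:bounds} with the defining $\D^\gamma$ bound \eqref{e:defHolder} gives, upon testing against $\phi_x^\lambda$,
\[
\bigl|\bigl(\Pi_x f(x) - \Pi_y f(y)\bigr)(\phi_x^\lambda)\bigr| \;\lesssim\; \sum_{\alpha < \gamma} \lambda^{\alpha}\, |x-y|^{\gamma - \alpha}.
\]
Writing $\CR_{n+1} f - \CR_n f$ as a telescoping combination of such differences at scales $|x-y| \sim 2^{-n}$ and testing against $\phi_{x_0}^\lambda$, a case split into the regimes $\lambda \gtrsim 2^{-n}$ and $\lambda \lesssim 2^{-n}$ produces contributions of size $2^{-n\gamma}$ that sum geometrically; the resulting limit automatically inherits the bound \eqref{e:boundRf}. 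The main obstacle is precisely this bookkeeping: when $\inf A < 0$ the factor $\lambda^\alpha$ \emph{diverges} as $\lambda \to 0$, so convergence hinges entirely on the cancelling factor $|x-y|^{\gamma-\alpha}$ supplied by \eqref{e:defHolder}. It is exactly the hypothesis $\gamma > 0$ (rather than merely $\gamma > \inf A$) that keeps every term in the sum above of the same small size when $\lambda \sim |x-y|$, and this is what allows one to glue the a priori inconsistent local data $\{\Pi_x f(x)\}_x$ into a single, well-defined distribution.
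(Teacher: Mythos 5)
Your argument is sound and, in spirit, exactly what the paper alludes to: the text does not actually prove Theorem~\ref{theo:reconstruction} here but only isolates the key comparison estimate (the bound \eqref{e:mainBound} on $(\zeta_x - \zeta_y)(\phi_x^\lambda)$) and points to \cite{Brazil} and \cite[Prop.~3.25]{Regular} for the construction. You re-derive that estimate correctly from $\Pi_y = \Pi_x\Gamma_{xy}$ together with \eqref{e:bounds} and \eqref{e:defHolder}, and you supply a dyadic partition-of-unity/telescoping construction for $\CR f$, which is a standard way to complete the proof (the original paper \cite{Regular} uses a wavelet decomposition instead, but this is a presentational choice with the same content). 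Your uniqueness argument via mollification is also correct.

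One imprecision worth flagging in the existence step: when $\lambda \lesssim 2^{-n}$, testing $\CR_{n+1}f - \CR_n f$ against $\phi_{x_0}^\lambda$ does \emph{not} directly yield a bound of order $2^{-n\gamma}$. In that regime the available estimate is of the form $\sum_{\beta < \gamma} \lambda^\beta (2^{-n})^{\gamma-\beta}$, which blows up for $\beta < 0$ as $\lambda \to 0$ with $n$ fixed. The correct bookkeeping is to choose $n_0$ with $2^{-n_0}\sim\lambda$ and split $\CR f - \Pi_{x_0}f(x_0)$ into the tail $\sum_{n \ge n_0}(\CR_{n+1}f - \CR_n f)$ --- here $2^{-n} \lesssim \lambda$, your good regime, and the sum is geometric of order $\lambda^\gamma$ --- plus the head $\CR_{n_0}f - \Pi_{x_0}f(x_0)$, which is bounded directly by $\lambda^\gamma$ using the comparison estimate at the matched scale $|x-x_0|\sim 2^{-n_0}\sim\lambda$. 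This gives \eqref{e:boundRf}; Cauchy-ness of $\CR_n f$ in $\CS'(\R^d)$ then follows by fixing a single compactly supported test function (hence a fixed scale) and applying only the tail bound.
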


We do not provide a proof of this result in these notes, but a concise version can be found
in \cite{Brazil}. It relies crucially on the fact that
our assumptions guarantee that the distributions $\zeta_x = \Pi_x f(x)$ vary slowly with $x$:
\begin{equ}[e:mainBound]
\bigl(\zeta_x - \zeta_y\bigr)(\phi_x^\lambda) = \Pi_x \bigl(f(x) - \Gamma_{xy} f(y)\bigr)(\phi_x^\lambda)
\lesssim \sum_{\alpha < \gamma} \lambda^\alpha \, |x-y|^{\gamma - \alpha}
\lesssim \lambda^\gamma\;,
\end{equ}
for $|y-z| \lesssim \lambda$.
In particular, the value of $\zeta_y(\phi_x^\lambda)$ varies by at most of the order $\lambda^\gamma$
as $y$ varies over the support of $\phi_x^\lambda$.
In fact, one can show more generally that given a function $x \mapsto \zeta_x$ which is 
(locally) uniformly bounded in $\CC^\alpha$ for some $\alpha < 0$ and such that 
\eqref{e:mainBound} holds for some $\gamma > 0$, 
then there exists a unique distribution $\zeta \in \CC^\alpha$ such that
$\bigl|\bigl(\zeta - \zeta_x\bigr)(\phi_x^\lambda)\bigr| \lesssim \lambda^\gamma$, locally
uniformly in $x$, see \cite[Prop.~3.25]{Regular}.

\begin{remark}
The spaces $\CD^\gamma$ are natural generalisations of H\"older spaces. One may wonder
whether there are also natural generalisations of other classical function spaces and whether
Theorem~\ref{theo:reconstruction} still applies. In \cite{Cyril}, the authors
show that approximations to the multiplicative stochastic heat equation converge
even when the initial condition is taken to be a Dirac mass. For this, it appears that
weighted H\"older-type spaces are not suitable spaces to work with.
Instead, the authors work with a generalisation of a scale of inhomogeneous Besov spaces.
It seems likely that most classical function spaces admit generalisations to the present
context. The rule of thumb regarding the reconstruction theorem is that one typically
has existence and uniqueness of the reconstruction operator as soon as the ``regularity index''
of the corresponding classical function space is positive. 
\end{remark}

\begin{remark}
Since the spaces $\D^\gamma$ depend on the choice of model $(\Pi,\Gamma)$,
the reconstruction operator $\CR$ itself also depends on that choice.
Since the aim of the theory is to provide robust approximation procedures, 
we should really view $\CR$ as a map from the total space $\MM \ltimes \CD^\gamma$ 
into $\CS'(\R^d)$. It turns out that even if viewed it in this way, 
$\CR$ is still a continuous map, although it is no longer linear.
The fact that this stronger continuity property also holds is crucial when 
showing that sequences of
solutions to mollified equations all converge to the same limiting object.
\end{remark}

\begin{remark}\label{rem:continuousModel}
In the particular case where $\Pi_x \tau$ happens to be a continuous function for every $\tau \in T$
(and every $x \in \R^d$), $\CR f$ is also a continuous function and one has the identity
\begin{equ}[e:formulaRf]
\bigl(\CR f\bigr)(x) = \bigl(\Pi_x f(x)\bigr)(x)\;.
\end{equ}
This can easily be seen by noting that in this case, if $\phi$ integrates to $1$, then as 
$\lambda \to 0$, the quantity $\bigl(\Pi_x f(x)\bigr)(\phi_x^\lambda)$ appearing in 
\eqref{e:boundRf} converges to $\bigl(\Pi_x f(x)\bigr)(x)$. Since the right hand side converges
to $0$, this implies that $\bigl(\CR f\bigr)(\phi_x^\lambda)$ converges to
$\bigl(\Pi_x f(x)\bigr)(x)$, whence \eqref{e:formulaRf} follows.
\end{remark}

\subsection{A classical result from harmonic analysis}
\label{sec:classical}

It is a classical result in harmonic analysis \cite{BookChemin} that the product
extends naturally to $\CC^{-\alpha} \times \CC^\beta$ into $\CS'(\R^d)$ if and only if 
$\beta > \alpha$. The reconstruction theorem yields a straightforward proof of the ``if''
part of this result:

\begin{theorem}\label{thm:barrier}
There is a continuous bilinear map $B \colon \CC^{-\alpha} \times \CC^\beta \to \CS'(\R^d)$
such that $B(f,g) = fg$ for any two continuous functions $f$ and $g$.
\end{theorem}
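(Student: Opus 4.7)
The plan is to build a tailor-made regularity structure in which $f$ plays the role of a ``noise'' and $g$ that of a classical $\CC^\beta$ function, and then use the reconstruction theorem to produce the product. The subcriticality condition $\beta > \alpha$ will manifest itself as the requirement that the resulting modelled distribution lives in a space $\D^\gamma$ with strictly positive exponent $\gamma = \beta - \alpha$.

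First I would set up the regularity structure $\TT = (A,T,G)$ as follows: let $T$ be spanned by polynomial symbols $\X^k$ (at homogeneity $|k|$) together with products $\Xi\X^k$ (at homogeneity $|k|-\alpha$), where $\Xi$ is a single abstract symbol of homogeneity $-\alpha$ and $k$ ranges over multi-indices with $|k| < \beta$. The structure group $G$ acts on $\X^k$ as in \eqref{e:TaylorExp}, trivially on $\Xi$, and is extended multiplicatively to the products $\Xi\X^k$. Given $f \in \CC^{-\alpha}$, I would define a model $(\Pi^f,\Gamma)$ by
\begin{equ}
\bigl(\Pi_x^f \X^k\bigr)(y) = (y-x)^k\;, \qquad \Pi_x^f\Xi = f\;, \qquad \Pi_x^f(\Xi\X^k) = f \cdot (\cdot-x)^k\;,
\end{equ}
the last expression being well-defined as a distribution since its polynomial prefactor is smooth. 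The identity $\Pi_x^f \Gamma_{xy} = \Pi_y^f$ then holds by construction, and the analytic bounds \eqref{e:bounds} reduce, after a change of variables absorbing the polynomial prefactor into an extra factor $\lambda^{|k|}$, to the defining bound $|f(\phi_x^\lambda)| \lesssim \lambda^{-\alpha}$ characterising $\CC^{-\alpha}$.

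Next I would lift $g \in \CC^\beta$ to the modelled distribution
\begin{equ}
G(x) = \sum_{|k| < \beta} {D^k g(x) \over k!} \X^k\;,
\end{equ}
which a routine computation using Taylor-remainder bounds places in $\D^\beta(\TT,\Gamma)$. Setting $F(x) \equiv \Xi$, which trivially lies in $\D^\gamma$ for every $\gamma$, the pointwise algebraic product
\begin{equ}
(F \cdot G)(x) = \sum_{|k| < \beta} {D^k g(x) \over k!} \Xi\X^k
\end{equ}
has all components at homogeneities strictly less than $\beta - \alpha$, and exactly the same Taylor-remainder estimates as for $G$ yield $F\cdot G \in \D^{\beta-\alpha}$. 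Since $\beta - \alpha > 0$, Theorem~\ref{theo:reconstruction} applies and produces a distribution $B(f,g) \eqdef \CR(F\cdot G) \in \CS'(\R^d)$.

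Continuity of $B$ in both arguments then follows from the linear dependence of $f \mapsto \Pi^f$ and of $g \mapsto G$ on their respective arguments, combined with the (joint) continuity of the reconstruction operator on the total space $\MM \ltimes \D^{\beta-\alpha}$. When $f$ is in addition continuous, the maps $\Pi_x^f$ take values in continuous functions, so Remark~\ref{rem:continuousModel} yields $B(f,g)(x) = \bigl(\Pi_x^f(F\cdot G)(x)\bigr)(x)$; only the $k=0$ term survives the evaluation at $y=x$, giving $f(x)g(x)$. The main obstacle is really just getting the algebraic setup and the model axioms right in the first place; once that is done, the reconstruction theorem carries out all the analytic work automatically.
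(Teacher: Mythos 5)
Your proposal is correct and follows the paper's proof essentially verbatim: the same regularity structure (abstract polynomials together with a copy $\sXi\X^k$ at shifted homogeneity $|k|-\alpha$, with $G$ the polynomial group acting diagonally), the same model $\Pi^f_x(\sXi\X^k)=(\cdot-x)^k f$, the same Taylor lift of $g$ into $\D^\beta$, the product with the constant modelled distribution $\Xi$ landing in $\D^{\beta-\alpha}$, and then reconstruction plus Remark~\ref{rem:continuousModel} to identify the output with $fg$ for continuous inputs. The one minor difference is that you spell out the verification of the model bound (absorbing $(\cdot-x)^k$ into a rescaled test function to reduce to $|f(\phi_x^\lambda)|\lesssim\lambda^{-\alpha}$), a step the paper calls ``straightforward''; you also correctly use $|k|-\alpha$ where the paper has a sign typo.
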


\begin{proof}
Assume from now on that $\xi \in \CC^{-\alpha}$ for some $\alpha > 0$ and that $f \in \CC^\beta$
for some $\beta > \alpha$. 
We then build a regularity structure $\TT$ in the following way. For the set $A$,
we take $A = \N \cup (\N-\alpha)$ and
for $T$, we set $T = V \oplus W$,
where each one of the spaces $V$ and $W$ is a copy of the polynomial structure in $d$ commuting variables
described in Section~\ref{sec:Holder}.
We also choose the structure group $G$ as in the polynomial structure, 
acting simultaneously on each of the two instances.

As before, we denote by $\X^k$ the canonical basis vectors in $V$. We also use the suggestive notation
``$\sXi \X^k$'' for the corresponding basis vector in $W$, but we postulate that $|\sXi \X^k| = \alpha + |k|$ 
rather than $|\sXi \X^k| = |k|$ as would usually be the case.
Given any distribution $\xi \in \CC^{-\alpha}$, we then define a model $(\Pi^\xi,\Gamma)$,
where $\Gamma$ is as in the canonical polynomial model, while $\Pi^\xi$ acts as
\begin{equ}
\bigl(\Pi^\xi_x  \X^k\bigr)(y) = (y-x)^k \;,\qquad
\bigl(\Pi^\xi_x  \sXi \X^k\bigr)(y) = (y-x)^k \xi(y)\;,
\end{equ}
with the obvious abuse of notation in the second expression. 
It is then straightforward to verify that 
$\Pi_y = \Pi_x\circ \Gamma_{xy}$ and that the relevant analytical bounds are satisfied, so that this is indeed a model.

Denote now by $\CR^\xi$ the reconstruction map associated to the model $(\Pi^\xi,\Gamma)$ and, for $f \in \CC^\beta$,
denote by $F$ the element in $\D^\beta$ given by the local Taylor expansion of $f$ of order $\beta$ at each point.
Note that even though the space $\D^\beta$ does in principle depend on the choice of model, in our situation
$F \in \D^\beta$ for any choice of $\xi$ since $\Gamma_{xy}$ is independent of $\xi$. 
It follows immediately from the definitions that the map $x \mapsto \sXi F(x)$ belongs to $\D^{\beta - \alpha}$
so that, provided that $\beta > \alpha$, one can apply the reconstruction operator to it. This suggests that the
multiplication operator we are looking for can be defined as
\begin{equ}
B(f,\xi) = \CR^\xi \bigl(\sXi F\bigr)\;.
\end{equ}
By Theorem~\ref{theo:reconstruction}, 
this is a jointly continuous map from $\CC^\beta \times \CC^{-\alpha}$ into $\CS'(\R^d)$, provided that 
$\beta > \alpha$.
If $\xi$ happens to be a smooth function, then it follows immediately from Remark~\ref{rem:continuousModel} that 
$B(f,\xi) = f(x)\xi(x)$, so that $B$ is indeed the requested continuous extension of the usual product.
\end{proof}

\begin{remark}
As a consequence of \eref{e:boundRf}, it follows that $B \colon \CC^{-\alpha} \times \CC^\beta \to \CC^{-\alpha}$.
\end{remark}

\subsection{Products}
\label{sec:prod}

One of the main purposes of the theory presented here is to give a robust way to multiply distributions
(or functions with distributions) that goes beyond the barrier illustrated by Theorem~\ref{thm:barrier}.
Provided that our functions / distributions are represented as elements in $\D^\gamma$ for some
model and regularity structure, we can multiply their ``Taylor expansions'' pointwise, provided that
we give ourselves a table of multiplication on $T$.

It is natural to consider products with the following properties. Here, given a regularity structure,
we say that a subspace
$V \subset T$ is a \textit{sector} if it is invariant under the action of the structure group $G$
and if it can furthermore be written as $V = \bigoplus_{\alpha \in A} V_\alpha$
with $V_\alpha \subset T_\alpha$.

\begin{definition}
Given a regularity structure $(T,A,G)$ and two sectors $V, \bar V \subset T$, a \textit{product}
on $(V,\bar V)$ is a bilinear map $\star \colon V \times \bar V \to T$ such that, for any $\tau \in V_\alpha$
and $\bar \tau \in \bar V_\beta$, one has $\tau \star \bar \tau \in T_{\alpha + \beta}$ and such that,
for any element $\Gamma \in G$, one has $\Gamma(\tau \star \bar \tau) = \Gamma \tau \star \Gamma \bar \tau$. 
\end{definition}

\begin{remark}
The condition that homogeneities add up under multiplication is very natural bearing in mind the case
of the polynomial regularity structure. The second condition is also very natural since it merely states that
if one reexpands the product of two ``polynomials'' around a different point, one should obtain the same result
as if one reexpands each factor first and then multiplies them together.
\end{remark}

Given such a product, we can ask ourselves when the pointwise product of an element $\D^{\gamma_1}$
with an element in $\D^{\gamma_2}$ again belongs to some $\D^\gamma$. In order to answer this question, 
we introduce the notation $\D_\alpha^\gamma$ to denote those elements $f \in \D^\gamma$ such that 
furthermore 
$f(x) \in T_{\ge\alpha}$
for every $x$. With this notation at hand, 
it is not too difficult to verify that one has the following result:

\begin{theorem}\label{theo:mult}
Let $f_1 \in \D^{\gamma_1}_{\alpha_1}(V)$, $f_2 \in \D^{\gamma_2}_{\alpha_2}(\bar V)$, and let $\star$
be a product on $(V,\bar V)$. Then, the function $f$ given by $f(x) = f_1(x) \star f_2(x)$ belongs to $\D_{\alpha}^\gamma$
with
\begin{equ}[e:formulaGamma]
\alpha = \alpha_1 +\alpha_2\;,\qquad \gamma = (\gamma_1 + \alpha_2)\wedge (\gamma_2 + \alpha_1)\;.
\end{equ} 
\end{theorem}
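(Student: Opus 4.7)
The natural candidate for the product is $f(x) = \pi_{<\gamma}\bigl(f_1(x) \star f_2(x)\bigr)$, where $\pi_{<\gamma}$ denotes the projection onto $\bigoplus_{\alpha<\gamma}T_\alpha$. I split the proof into verifying two things: that $f$ takes values in $T_{\ge \alpha}$ with $\alpha = \alpha_1+\alpha_2$, and that the bound \eqref{e:defHolder} holds at level $\gamma$. The first is immediate from the grading property of the product, since $V_{\ge \alpha_1}\star \bar V_{\ge \alpha_2} \subset T_{\ge \alpha_1+\alpha_2}$.

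For the H\"older-type estimate, I work first with the untruncated product $g(x) = f_1(x)\star f_2(x)$. Using that $\Gamma_{xy}\in G$ is multiplicative with respect to $\star$, I employ the standard ``add and subtract'' decomposition
\begin{equ}
g(x) - \Gamma_{xy} g(y) = \bigl(f_1(x) - \Gamma_{xy} f_1(y)\bigr) \star f_2(x) + \Gamma_{xy} f_1(y) \star \bigl(f_2(x) - \Gamma_{xy} f_2(y)\bigr)\;.
\end{equ}
Projecting onto $T_\beta$ for $\beta < \gamma$ and using the grading of $\star$, I expand the norm as a finite sum over pairs $(\alpha', \alpha'')$ with $\alpha' + \alpha'' = \beta$, where $\alpha'$ indexes components in $V$ and $\alpha''$ components in $\bar V$.

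For the first summand, the $\D^{\gamma_1}_{\alpha_1}$ bound gives $\|f_1(x) - \Gamma_{xy}f_1(y)\|_{\alpha'} \lesssim |x-y|^{\gamma_1 - \alpha'}$ for $\alpha_1 \le \alpha' < \gamma_1$, while $\|f_2(x)\|_{\alpha''} \lesssim 1$ uniformly on compacts for $\alpha'' \ge \alpha_2$. Since $\alpha'' \ge \alpha_2$ forces $\alpha' \le \beta - \alpha_2$, each term is bounded by $|x-y|^{\gamma_1+\alpha_2-\beta}$. For the second summand, I do a case analysis on $\Gamma_{xy}f_1(y)$: using the second bound in \eqref{e:bounds}, $\|\Gamma_{xy}f_1(y)\|_{\alpha'} \lesssim 1$ when $\alpha' \ge \alpha_1$, and $\|\Gamma_{xy}f_1(y)\|_{\alpha'} \lesssim |x-y|^{\alpha_1 - \alpha'}$ when $\alpha' < \alpha_1$. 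Combining with $\|f_2(x) - \Gamma_{xy}f_2(y)\|_{\alpha''} \lesssim |x-y|^{\gamma_2 - \alpha''}$, both cases yield the bound $|x-y|^{\alpha_1 + \gamma_2 - \beta}$, the key cancellation being $\alpha_1 - \alpha' + \gamma_2 - \alpha'' = \gamma_2 + \alpha_1 - \beta$ in the second case.

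It remains to reconcile the untruncated $g$ with $f = \pi_{<\gamma}g$. At level $\beta < \gamma$ one has $\pi_\beta f(x) = \pi_\beta g(x)$, but $\pi_\beta \Gamma_{xy}f(y)$ differs from $\pi_\beta \Gamma_{xy}g(y)$ by the term $-\pi_\beta \Gamma_{xy}\pi_{\ge\gamma}g(y)$. Applying \eqref{e:bounds} to each of the finitely many homogeneities $\alpha \in [\gamma, \gamma_1 + \gamma_2)$ appearing in $g(y)$, this correction is bounded by $\sum_{\alpha \ge \gamma} |x-y|^{\alpha - \beta} \lesssim |x-y|^{\gamma-\beta}$, which is absorbed into the main estimate. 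The expected main obstacle is purely bookkeeping: organising the case split between $\alpha' \ge \alpha_1$ and $\alpha' < \alpha_1$ in the second term carefully enough to see that both cases produce the same power $|x-y|^{\alpha_1 + \gamma_2 - \beta}$, so that the final exponent is precisely $\gamma - \beta$ with $\gamma$ given by \eqref{e:formulaGamma}.
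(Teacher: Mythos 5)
Your argument is essentially correct and follows the same route as the proof in \cite{Regular} (Theorem~4.7 there), which the paper cites instead of reproducing: the telescoping decomposition
\begin{equ}
g(x) - \Gamma_{xy} g(y) = \bigl(f_1(x) - \Gamma_{xy} f_1(y)\bigr) \star f_2(x) + \Gamma_{xy} f_1(y) \star \bigl(f_2(x) - \Gamma_{xy} f_2(y)\bigr)\;,
\end{equ}
valid because $G$ acts multiplicatively on $\star$, followed by a homogeneity-by-homogeneity estimate and the observation that the truncation $\pi_{<\gamma}$ introduces only an error controlled by the model bound on $\Gamma_{xy}$ restricted to components above level $\gamma$. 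Two minor imprecisions worth flagging but not affecting correctness: in the first summand you restrict to $\alpha' \ge \alpha_1$, but $\Gamma_{xy}f_1(y)$ can have components below $\alpha_1$ (since $\Gamma_{xy}$ lowers homogeneity within the sector $V$); fortunately the $\D^{\gamma_1}$ bound $\|f_1(x)-\Gamma_{xy}f_1(y)\|_{\alpha'}\lesssim |x-y|^{\gamma_1-\alpha'}$ holds for \emph{all} $\alpha'<\gamma_1$, so the estimate goes through unchanged on the fuller range. Also, the claim $\|f_2(x)\|_{\alpha''}\lesssim 1$ on compacts (and likewise $\|\Gamma_{xy}f_1(y)\|_{\alpha'}\lesssim 1$ for $\alpha'\ge\alpha_1$) uses local boundedness of elements of $\D^\gamma$, which is not literally part of Definition~4 as written but is a standard and easy consequence of \eqref{e:defHolder} together with the second bound in \eqref{e:bounds}; you should say so explicitly rather than treating it as given.
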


The proof of this result is straightforward and can be found in \cite{Regular,Brazil}.
It is clear that the formula \eref{e:formulaGamma} for $\gamma$ is optimal in general as can be seen from
the following two ``reality checks''. First, consider the case of the polynomial model
and take $f_i \in \CC^{\gamma_i}$. In this case, the truncated Taylor series $F_i$ for $f_i$ belong to $\D_0^{\gamma_i}$. 
It is clear that in this case, the product cannot be expected to have better regularity than $\gamma_1 \wedge \gamma_2$ in
general, which is indeed what \eref{e:formulaGamma} states.
The second reality check comes from the example of Section~\ref{sec:classical}.
In this case, one has $F \in \D_0^\beta$, while the constant function $x \mapsto \Xi$ belongs
to $\D_{-\alpha}^\infty$ so that, according to \eref{e:formulaGamma}, one expects their product
to belong to $\D_{-\alpha}^{\beta-\alpha}$, which is indeed the case.

\begin{remark}
In order to obtain robust approximation results, one would like to obtain
bounds on the distance between $f_1 \star f_2$ and $\bar f_1 \star \bar f_2$ in
cases where the $f_i$ belong to spaces $\CD^{\gamma_i}$ based on some
model $(\Pi,\Gamma) \in \MM$, while the $\bar f_i$ belong to spaces $\CD^{\gamma_i}$ 
based on a different model $(\bar \Pi,\bar \Gamma) \in \MM$. This can also be obtained,
but makes the proof slightly more lengthy.
\end{remark}

\begin{remark}
Even 
if both $\CR f_1$ and $\CR f_2$ happens to be continuous functions, this does
\textit{not} in general imply that $\CR(f_1 \star f_2)(x) = (\CR f_1)(x) \, (\CR f_2)(x)$!

For example, fix $\kappa < 0$ and consider the regularity structure given by $A = (-2\kappa,-\kappa, 0)$,
with each $T_\alpha$ being a copy of $\R$ given by $T_{-n\kappa} = \scal{\sXi^n}$. 
We furthermore take for $G$ the trivial group. This regularity structure comes with an obvious product
by setting $\sXi^m \star \sXi^n = \sXi^{m+n}$ provided that $m+n \le 2$.
Then a perfectly valid model is
\begin{equ}
\bigl(\Pi_x \sXi^0\bigr)(y) = 1\;,\quad 
\bigl(\Pi_x \sXi^1\bigr)(y) = 0\;,\quad \label{e:modelNonStandard}
\bigl(\Pi_x \sXi^2\bigr)(y) = c\;,
\end{equ}
for which, setting $F(x) = f(x)\sXi^0 + g(x) \sXi^1$, one has
$(\CR F)(x) = f(x)$, but $(\CR F^2)(x) = f^2(x) + c g^2(x)$.
This flexibility is crucial since the ``naive'' definition of the product is usually 
broken by renormalisation, as we will see in Section~\ref{sec:renorm} below.
\end{remark}

\subsection{Schauder estimates and admissible models}
\label{sec:Schauder}

One of the reasons why the theory of regularity structures is very successful at providing detailed
descriptions of the small-scale features of solutions to semilinear (S)PDEs is that it comes with
very sharp Schauder estimates. Recall that the classical Schauder estimates state that if $K\colon \R^d \to \R$
is a kernel that is smooth everywhere, except for a singularity at the origin that is (approximately) homogeneous
of degree $\beta - d$ for some $\beta > 0$, then the operator $f \mapsto K * f$ maps $\CC^\alpha$ into $\CC^{\alpha +\beta}$
for every $\alpha \in \R$, except for those values for which $\alpha + \beta \in \N$. (See for example \cite{MR1459795}.)

It turns out that similar Schauder estimates hold in the context of general regularity structures in the sense
that it is in general possible to build an operator $\CK \colon \D^\gamma \to \D^{\gamma+\beta}$ with the
property that $\CR \CK f = K * \CR f$. Of course, such a statement can only be true if our regularity structure
contains not only the objects necessary to describe $\CR f$ up to order $\gamma$, but also those required
to describe $K * \CR f$ up to order $\gamma + \beta$. What are these objects? At this stage, it might be useful
to reflect on the effect of the convolution of a singular function (or distribution) with $K$. 

Let us assume for a moment that $f$ is also smooth everywhere, except at some point $x_0$. It is then 
straightforward to convince ourselves that $K * f$ is also smooth everywhere, except at $x_0$. Indeed,
for any $\delta > 0$, we can write $K = K_\delta + K_\delta^c$, where $K_\delta$ is supported in a ball of radius
$\delta$ around $0$ and $K_\delta^c$ is a smooth function. Similarly, we can decompose $f$ as $f = f_\delta + f_\delta^c$,
where $f_\delta$ is supported in  a $\delta$-ball around $x_0$ and $f_\delta^c$ is smooth. Since the convolution of a 
smooth function with an arbitrary distribution is smooth, it follows that the only non-smooth component of $K * f$
is given by $K_\delta * f_\delta$, which is supported in a ball of radius $2\delta$ around $x_0$. Since $\delta$ was
arbitrary, the statement follows. By linearity, this strongly suggests that the local structure of the singularities of $K*f$ can be described completely by only using knowledge on the local structure of the singularities of $f$.
It also suggests that the ``singular part'' of the operator $\CK$ should be local, with the non-local
parts of $\CK$ only contributing to the ``regular part''.

This discussion suggests that we certainly need the following ingredients to build an operator $\CK$ with the
desired properties:
\begin{claim}
\item The canonical polynomial structure should be part of our regularity structure in order to
be able to describe the ``regular parts''.
\item We should be given an ``abstract integration operator'' $\CI$ on $T$ which describes how the 
``singular parts'' of $\CR f$ transform under convolution by $K$.
\item We should restrict ourselves to models which are ``compatible'' with the action of $\CI$ in the sense
that the behaviour of $\Pi_x \CI \tau$ should relate in a suitable way to the behaviour of $K * \Pi_x \tau$ near $x$.
\end{claim}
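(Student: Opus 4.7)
Since the ``statement'' is really a list of design requirements rather than a theorem with a single conclusion, my plan is to justify each bullet by examining what any candidate operator $\CK$ must look like at a fixed base point $x$, given the defining identity $\CR \CK f = K * \CR f$ together with the requirement that $\CK f \in \D^{\gamma+\beta}$. A convenient organising tool is the decomposition $K = K_+ + K_-$, where $K_+$ is smooth and $K_-$ is compactly supported near the origin and carries the singularity of order $\beta - d$, so that every convolution splits into a purely smooth contribution (from $K_+$) and a genuinely singular one (from $K_-$) whose scaling properties are explicit.

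For ingredient (i), I would test the construction on an $f$ for which $\CR f$ happens to be smooth. Then $K * \CR f$ is itself smooth, and the defining bound \eqref{e:defHolder} for $\D^{\gamma+\beta}$ forces $(\CK f)(x)$ to encode the Taylor jet of $K * \CR f$ at $x$ up to order $\gamma+\beta$. This is only possible if the regularity structure already contains a polynomial sector $\{\X^k\}$ up to $|k| < \gamma+\beta$, endowed with the model \eqref{e:defPi} and the reexpansion law \eqref{e:basicGamma}. For ingredient (ii), the interesting case is $\tau \in T_\alpha$ with $\Pi_x \tau$ of genuinely negative regularity and $\alpha + \beta \notin \N$. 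A scaling analysis of $K_- * \Pi_x \tau$ near $x$, essentially the content of the classical Schauder estimate, shows that this convolution behaves like a fixed distributional shape of homogeneity $\alpha + \beta$, modulo a remainder that is smooth at that order. No finite combination of polynomial monomials can reproduce such a non-integer singular profile, so we are forced to adjoin a new basis vector, depending linearly on $\tau$, of homogeneity $\alpha + \beta$: this vector is what we call $\CI\tau$.

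Ingredient (iii) then endows the formal symbol $\CI\tau$ with analytical content. Chasing the identity $\CR \CK f = K * \CR f$ through the reconstruction bound \eqref{e:boundRf} shows that, at each base point $x$, $\Pi_x \CI \tau$ must agree with $K * \Pi_x \tau$ up to a function that is smooth to order $\gamma+\beta$ near $x$; the mismatch is then absorbed into the polynomial component of $(\CK f)(x)$ provided by (i). The main obstacle I expect in promoting this sketch to a genuine construction of $\CK$ is the interplay between $\CI$ and the structure group: consistency of the singular/smooth split across different base points forces a precise algebraic rule for the action of $\Gamma_{xy}$ on $\CI\tau$, and it is precisely here that in the full theory one must introduce a polynomial correction to $\CI$ (the so-called ``positive renormalisation'') in order for the relation $\Pi_x \Gamma_{xy} = \Pi_y$ of Definition~\ref{def:model} to remain compatible with convolution by $K$.
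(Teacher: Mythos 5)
Your justification is correct and follows essentially the same route as the paper: you split $K$ into a smooth part plus a compactly supported singular part, observe that the singular behaviour of $K * \CR f$ near a point is determined locally by the singularities of $\CR f$, and deduce from this the need for a polynomial sector, a new symbol $\CI\tau$ of homogeneity $\alpha+\beta$ for the singular profiles, and a compatibility condition relating $\Pi_x \CI\tau$ to $K * \Pi_x\tau$. The additional detail you supply on the non-integer scaling of the singular profile and on the polynomial correction forced by consistency with the structure group matches exactly the paper's subsequent Assumption on $\CI$ and its definition of admissible models via $\CJ(x)$.
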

One way to implement these ingredients is to assume first that our model space $T$ 
contains abstract polynomials in the following sense.

\begin{assumption}\label{ass:poly}
There exists a sector $\bar T \subset T$ isomorphic to the space of abstract polynomials
in $d$ commuting variables. In other words, $\bar T_\alpha \neq 0$ if and only if $\alpha \in \N$, 
and one can find basis vectors $\X^k$ of $T_{|k|}$ such that every element $\Gamma \in G$ acts
on $\bar T$ by $\Gamma \X^k = (\X-h)^k$ for some $h \in \R^d$.
\end{assumption}

Furthermore, we assume that there exists an abstract integration operator $\CI$ with the following properties.

\begin{assumption}\label{ass:int}
There exists a linear map $\CI \colon T \to T$ such that $\CI T_\alpha \subset T_{\alpha + \beta}$,
such that $\CI \bar T = 0$, and such that, for every $\Gamma \in G$ and $\tau \in T$, one has
\begin{equ}[e:propI]
\Gamma \CI \tau - \CI \Gamma \tau \in \bar T\;.
\end{equ}
\end{assumption}

Finally, we want to consider models that are compatible with this structure for a given kernel $K$.
For this, we first make precise what we mean exactly when we said that $K$ is approximately homogeneous of 
degree $\beta - d$. 

\begin{assumption}\label{ass:kernel}
One can write $K = \sum_{n \ge 0} K_n$ where each of the kernels $K_n\colon \R^d \to \R$ is
smooth and compactly supported in a ball of radius $2^{-n}$ around the origin. Furthermore, we assume that
for every multiindex $k$, one has a constant $C$ such that the bound
\begin{equ}[e:boundKn]
\sup_x |D^k K_n(x)| \le C 2^{n(d-\beta + |k|)}\;,
\end{equ}
holds uniformly in $n$. Finally, we assume that $\int K_n(x) P(x)\,dx = 0$ for every polynomial $P$
of degree at most $N$, for some sufficiently large value of $N$.
\end{assumption}

\begin{remark}
It turns out that in order to define the operator $\CK$ on $\D^\gamma$, we will need $K$ to
annihilate polynomials of degree $N$ for some $N \ge \gamma + \beta$.
\end{remark}

\begin{remark}\label{rem:decompKernel}
The last assumption may appear to be extremely stringent at first sight. In practice, this turns out
not to be a problem at all. Say for example that we want to define an operator that represents convolution
with $P$, the fundamental solution to the heat equation. Then, $P$ can be decomposed into a sum of terms
satisfying the bound \eref{e:boundKn} with $\beta = 2$,
but it does of course not annihilate generic polynomials and it is not supported in the ball of radius $1$.

However, for any fixed value of $N>0$, it is straightforward to decompose $P$ as $P = K + \hat K$, where the
kernel $K$ is compactly supported and satisfies all of the properties mentioned above, and the kernel $\hat K$ is
smooth. Lifting the convolution with $\hat K$ to an operator from $\D^\gamma \to \D^{\gamma + \beta}$
(actually to $\D^{\bar \gamma}$ for any $\bar \gamma > 0$) is straightforward, so that we have reduced our
problem to that of constructing an operator describing the convolution by $K$.
\end{remark}

Given such a kernel $K$, we can now make precise what we meant earlier when we said that the models
under consideration should be compatible with the kernel $K$.

\begin{definition}
Given a kernel $K$ as in Assumption~\ref{ass:kernel} and a regularity structure $\TT$ satisfying
Assumptions~\ref{ass:poly} and \ref{ass:int}, we say that a model $(\Pi,\Gamma)$ is \textit{admissible} if 
the identities
\begin{equ}[e:defAdmissible]
\bigl(\Pi_x \X^k\bigr)(y) = (y-x)^k\;,\qquad
\Pi_x \CI \tau = K * \Pi_x \tau - \Pi_x \CJ(x) \tau\;,
\end{equ}
hold for every $\tau \in T$ in the domain of $\CI$. 
Here, $\CJ(x) \colon T \to \bar T$ is the linear map given on 
homogeneous elements by
\begin{equ}[e:defJ]
\CJ(x)\tau = \sum_{|k| < |\tau| + \beta} {\X^k\over k!} \int D^{(k)} K(x-y)\,\bigl(\Pi_x \tau\bigr)(dy)\;.
\end{equ}
\end{definition}

\begin{remark}\label{rem:welldef}
While $K * \xi$ is well-defined for any distribution $\xi$, it is not so clear \textit{a priori} whether
the operator $\CJ(x)$ given in \eref{e:defJ} is also well-defined. It turns out that the axioms of a model
do ensure that this is the case. The correct way of interpreting \eref{e:defJ} is by  
\begin{equ}
\CJ(x)\tau = \sum_{|k| < |\tau| + \beta} \sum_{n \ge 0} {\X^k\over k!} \bigl(\Pi_x \tau\bigr)\bigl(D^{(k)} K_n(x-\cdot)\bigr)\;.
\end{equ}
The scaling properties of the $K_n$ ensure that $2^{(\beta - |k|)n} D^{(k)} K_n(x-\cdot)$ is 
of the form $c \phi_x^\lambda$ with $\phi \in \CB_r$ (for arbitrary $r$), $c \approx 1$ and
$\lambda \approx 2^{-n}$. As a consequence of \eqref{e:bounds}, one then has 
\begin{equ}
\bigl|\bigl(\Pi_x \tau\bigr)\bigl(D^{(k)} K_n(x-\cdot)\bigr)\bigr| \lesssim 2^{(|k| - \beta - |\tau|)n}\;,
\end{equ}
so that this expression is indeed summable as long as $|k| < |\tau| + \beta$. 
\end{remark}

\begin{remark}
The above definition of an admissible model dovetails very nicely 
with our axioms defining a general model. Indeed, starting from \textit{any} regularity structure $\TT$, \textit{any}
model $(\Pi, \Gamma)$ for $\TT$, and a kernel
$K$ satisfying Assumption~\ref{ass:kernel}, it is usually possible to build a larger regularity structure $\hat \TT$
containing $\TT$ (in the ``obvious'' sense that $T \subset \hat T$ and the action of $\hat G$ on $T$ is compatible with 
that of $G$) and endowed with an abstract integration map $\CI$, as well as an admissible model $(\hat \Pi, \hat \Gamma)$ on 
$\hat \TT$ which reduces to $(\Pi,\Gamma)$ when restricted to $T$. 
See \cite{Regular} for more details.

The only exception to this rule arises
when the original structure $T$ contains some homogeneous element $\tau$ which does not represent a polynomial
and which is such that $|\tau| + \beta \in \N$. Since the bounds appearing both in the definition of a model and
in Assumption~\ref{ass:kernel} are only upper bounds, it is in practice easy to exclude such a situation by slightly
tweaking the definition of either the exponent $\beta$ or of the original regularity structure $\TT$. 
\end{remark}

With all of these definitions in place, we can finally build the operator $\CK \colon \D^\gamma\to \D^{\gamma + \beta}$
announced at the beginning of this section. Recalling the definition of $\CJ$ from \eref{e:defJ}, we set
\begin{equ}[e:defKf]
\bigl(\CK f\bigr)(x) = \CI f(x) + \CJ(x) f(x) + \bigl(\CN f\bigr)(x)\;,
\end{equ}
where the operator $\CN$ is given by
\begin{equ}[e:defN]
\bigl(\CN f\bigr)(x) = 
\sum_{|k| < \gamma + \beta} {\X^k\over k!} \int D^{(k)} K(x-y)\,\bigl(\CR f - \Pi_x f(x)\bigr)(dy)\;.
\end{equ}
Note first that thanks to the reconstruction theorem, it is possible to verify that the right hand side of 
\eref{e:defN} does indeed make sense for every $f \in \D^\gamma$ in virtually the same way as in Remark~\ref{rem:welldef}.
One has:

\begin{theorem}\label{theo:Schauder}
Let $K$ be a kernel satisfying Assumption~\ref{ass:kernel}, let $\TT = (A,T,G)$ be a regularity structure
satisfying Assumptions~\ref{ass:poly} and \ref{ass:int}, and let $(\Pi,\Gamma)$ be an admissible model for $\TT$.
Then, for every $f \in \D^\gamma$ with $\gamma \in (0,N-\beta)$ and $\gamma + \beta \not \in \N$, 
the function $\CK f$ defined in \eref{e:defKf} belongs to $\D^{\gamma + \beta}$ and
satisfies $\CR \CK f = K * \CR f$.
\end{theorem}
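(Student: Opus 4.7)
The plan is to establish the two claims of the theorem separately: first that $\CK f \in \CD^{\gamma+\beta}$, and second that $\CR \CK f = K * \CR f$. Throughout, the dyadic decomposition $K = \sum_{n\ge 0} K_n$ from Assumption~\ref{ass:kernel} is the central analytic tool: it lets us trade off the vanishing moments of $K_n$ against the scaling bound $|D^{(k)}K_n| \lesssim 2^{n(d-\beta+|k|)}$ and to split sums over $n$ at the threshold $2^{-n} \sim |x-y|$.

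I would begin with the second claim, which is cleaner and helps fix notation. Using admissibility one computes
\begin{equ}
\Pi_x \CK f(x) \;=\; \Pi_x \CI f(x) + \Pi_x \CJ(x) f(x) + \Pi_x \CN f(x) \;=\; K * \Pi_x f(x) + \Pi_x \CN f(x)\;,
\end{equ}
so that by definition of $\CN$ the quantity $\Pi_x \CN f(x)$ is precisely the Taylor polynomial at $x$, of degree strictly less than $\gamma+\beta$, of the function $K * (\CR f - \Pi_x f(x))$. Therefore
\begin{equ}
K * \CR f - \Pi_x \CK f(x) \;=\; \bigl(K * (\CR f - \Pi_x f(x))\bigr) - T_x^{<\gamma+\beta}\bigl(K * (\CR f - \Pi_x f(x))\bigr)\;,
\end{equ}
which is a Taylor remainder. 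Testing against $\phi_x^\lambda$ and splitting $K = \sum_n K_n$ into scales $2^{-n} \lesssim \lambda$ and $2^{-n} \gtrsim \lambda$, one bounds each piece using the reconstruction bound $|(\CR f - \Pi_x f(x))(\phi_x^\mu)| \lesssim \mu^\gamma$ combined with \eqref{e:boundKn} and the cancellation of low-order Taylor terms. This yields $|(K*\CR f - \Pi_x \CK f(x))(\phi_x^\lambda)| \lesssim \lambda^{\gamma+\beta}$, and the uniqueness part of Theorem~\ref{theo:reconstruction} (applied with exponent $\gamma+\beta$) identifies $K * \CR f$ as the reconstruction of $\CK f$, provided the first claim is known.

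For the first claim, I would separate the components along the non-polynomial and polynomial parts of $T$. For $\alpha \notin \N$, only $\CI f$ contributes to the $\alpha$-component of $\CK f$, since $\CJ(x)f(x), \CN f(x) \in \bar T$. Writing $\CI f(x) - \Gamma_{xy}\CI f(y) = \CI\bigl(f(x) - \Gamma_{xy}f(y)\bigr) + (\CI \Gamma_{xy} - \Gamma_{xy}\CI)f(y)$ and using Assumption~\ref{ass:int} to dispose of the bracketed discrepancy (which lies in $\bar T$, hence contributes nothing to non-polynomial components), the first term has the required size $|x-y|^{\gamma+\beta-\alpha}$ simply because $\CI$ shifts homogeneity by $\beta$ and $f \in \CD^\gamma$.

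The main obstacle, and where the bulk of the technical work lives, is bounding the polynomial components, i.e.~$\|\CJ(x)f(x) + \CN f(x) - \Gamma_{xy}\bigl(\CJ(y)f(y) + \CN f(y)\bigr) + (\CI - \Gamma_{xy}\CI\Gamma_{yx})(\text{correction})\|_\alpha$ for $\alpha \in \N$. The only viable strategy I see is a term-by-term dyadic analysis: expand each expression using the definitions \eqref{e:defJ}, \eqref{e:defN} and the decomposition of $K$, and split the resulting sum over $n$ at $2^{-n} \sim |x-y|$. For scales $2^{-n} \ll |x-y|$, use that $D^{(k)}K_n(x-\cdot) - D^{(k)}K_n(y-\cdot)$ can be Taylor-expanded and the reconstruction theorem bound applied, producing a geometric series that converges because $|k| < \gamma+\beta$. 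For scales $2^{-n} \gg |x-y|$, Taylor-expand each kernel about a common point so that the increment $x - y$ explicitly produces the gain $|x-y|^{\gamma+\beta-\alpha}$. The delicate bookkeeping is to verify that the contributions from $\CJ$, $\CN$, and the $\CI$-discrepancy under $\Gamma_{xy}$ assemble into a single telescoping expression which, after regrouping, has precisely the right cancellations — in particular, that the truncation thresholds in \eqref{e:defJ} and \eqref{e:defN} mesh correctly. Modulo this combinatorial care, the resulting bound is $|x-y|^{\gamma+\beta-\alpha}$, completing the proof.
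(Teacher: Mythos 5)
Your proposal takes a genuinely different (and more ambitious) route from the paper's own treatment, which simply defers the full argument to the original reference and contents itself with a quick verification of the identity $\CR\CK f = K*\CR f$ under the extra hypothesis that all $\Pi_x\tau$ are \emph{continuous functions}, so that the pointwise formula of Remark~\ref{rem:continuousModel} applies and the three pieces of \eqref{e:defKf} can be added at the point $x$ directly. Your argument for the identity is the one that actually works for general models: rewrite $\Pi_x\CK f(x) = K*\Pi_xf(x) + \Pi_x\CN f(x)$ via admissibility, observe that $\Pi_x\CN f(x)$ is exactly the degree-$<\gamma+\beta$ Taylor polynomial at $x$ of $K*(\CR f - \Pi_x f(x))$, estimate the remainder by a dyadic split of $K=\sum_n K_n$, and then invoke the \emph{uniqueness} half of Theorem~\ref{theo:reconstruction} to conclude — correctly noting that this last step presupposes $\CK f\in\CD^{\gamma+\beta}$. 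Your handling of the non-polynomial components of $\CK f(x) - \Gamma_{xy}\CK f(y)$ is also correct and clean: the commutator $(\CI\Gamma_{xy}-\Gamma_{xy}\CI)f(y)$ lands in $\bar T$ by Assumption~\ref{ass:int}, and the remaining term $\CI(f(x)-\Gamma_{xy}f(y))$ has the right size because $\CI$ raises homogeneity by $\beta$.

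That said, you should be aware that the estimate on the integer-homogeneity components is not a matter of ``combinatorial care'' bolted onto an otherwise complete argument — it \emph{is} the Schauder estimate, and it occupies the overwhelming majority of the corresponding proof in the reference. Two specific points your sketch elides: (i) verifying that the truncations $|k|<|\tau|+\beta$ in $\CJ$ and $|k|<\gamma+\beta$ in $\CN$ combine so that in the difference $\CK f(x)-\Gamma_{xy}\CK f(y)$ every term with the wrong power of $|x-y|$ telescopes away (this is precisely where the hypothesis $\gamma+\beta\notin\N$ enters: at integer $\gamma+\beta$ a boundary term fails to cancel and the statement is genuinely false); and (ii) in the far-field regime $2^{-n}\gg|x-y|$, the gain in $|x-y|$ has to come from the vanishing moments of $K_n$ applied to the \emph{difference} $\Pi_xf(x)-\Pi_y\Gamma_{yx}f(x)=0$ combined with the model bounds, which requires some care to set up without circularity. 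Since the paper itself punts to the reference here, your proposal is a reasonable account of the strategy; just do not mistake the last paragraph for having been completed.
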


\begin{proof}
The complete proof of this result can be found in \cite{Regular} and will not be given here. Let us simply show that one has
indeed $\CR \CK f = K * \CR f$ in the particular case when our model consists of continuous functions so
that Remark~\ref{rem:continuousModel} applies. In this case, one has
\begin{equ}
\bigl(\CR \CK f\bigr)(x) = \bigl(\Pi_x (\CI f(x) + \CJ(x) f(x))\bigr)(x) + \bigl(\Pi_x\bigl(\CN f\bigr)(x)\bigr)(x)\;.
\end{equ}
As a consequence of \eref{e:defAdmissible}, the first term appearing in the right hand side of this expression is given by
\begin{equ}
\bigl(\Pi_x (\CI f(x) + \CJ(x) f(x))\bigr)(x) = \bigl(K * \Pi_x f(x)\bigr)(x)\;.
\end{equ}
On the other hand, the only term contributing to the second term is the one with $k = 0$ (which is always present
since $\gamma > 0$ by assumption) which then yields
\begin{equ}
\bigl(\Pi_x\bigl(\CN f\bigr)(x)\bigr)(x) = \int K(x-y)\,\bigl(\CR f - \Pi_x f(x)\bigr)(dy)\;.
\end{equ}
Adding both of these terms, we see that the expression $\bigl(K * \Pi_x f(x)\bigr)(x)$ cancels, leaving us with the desired result.
\end{proof}

\section{Application of the theory to the dynamical \texorpdfstring{$\Phi^4_3$}{Phi43} model}
\label{sec:app}

We now sketch how the theory of regularity structures can be used to obtain
the kind of convergence result stated in Theorem~\ref{theo:construction}.
We will only focus on the dynamical $\Phi^4_3$ model and ignore the corresponding result for the
KPZ equation. 
First of all, we note that while our solution $\Phi$ will be 
a space-time distribution (or rather an element of $\D^\gamma$ for some regularity structure with 
a model over $\R^4$), the ``time'' direction has a different scaling behaviour from the three ``space''
directions. As a consequence, it turns out to be effective to slightly change our definition of
``localised test functions'' by setting
\begin{equ}
\phi_{(s,x)}^\lambda (t,y) = \lambda^{-5} \phi\bigl(\lambda^{-2}(t-s), \lambda^{-1}(y-x)\bigr)\;.
\end{equ}
Accordingly, the ``effective dimension'' of our space-time is actually $5$, rather than $4$.
The theory presented above extends \textit{mutatis mutandis} to this setting. (Note in particular
that when considering the degree of a regular monomial, powers of the time variable should now be 
counted double.) With this way of measuring regularity, space-time white noise
belongs to $\CC^{-\alpha}$ for every $\alpha > {5\over 2}$.

\subsection{Construction of the associated regularity structure}

Our first step is to build a regularity structure that is sufficiently large to allow to 
reformulate \eref{e:Phi4} as a fixed point in $\D^\gamma$ for some $\gamma > 0$. 
Denoting by $P$ the heat kernel, we 
can write the solution to \eref{e:Phi4} with initial condition $\Phi_0$ as
\begin{equ}[e:intPhi4]
\Phi = P * \one_{t > 0}\bigl(\xi - \Phi^3\bigr) + P \Phi_0\;,
\end{equ}
where $*$ denotes space-time convolution and where we denote by $P \Phi_0$ 
the solution to the heat equation with initial condition $\Phi_0$. 
Here, $\one_{t > 0}$ denotes the indicator function
of the set $\{(t,x)\,:\, t>0\}$. In order to have a chance of fitting 
this into the framework described above, we first decompose the heat kernel $P$ as
$P = K + \hat K$,
where $K$ satisfies all of the assumptions of Section~\ref{sec:Schauder} with $\beta = 2$ and
the remainder $\hat K$ is smooth, see Remark~\ref{rem:decompKernel}.
For any regularity structure containing the usual
Taylor polynomials and equipped with an admissible model, is straightforward to associate to $\hat K$ an
operator $\hat \CK \colon \D^\gamma \to \D^{\gamma + 2}$ via
\begin{equ}
\bigl(\hat \CK f\bigr)(z) = \sum_{|k| < \gamma+2} {\X^k \over k!} \bigl(D^{(k)}\hat K * \CR f\bigr)(z)\;,
\end{equ}
where $z$ denotes a space-time point.
Similarly, the harmonic extension of $\Phi_0$ can be lifted to an element in $\D^\gamma$
(for any fixed $\gamma > 0$) which we denote again by $P \Phi_0$ by
considering its Taylor expansion around every space-time point. At this stage, we note that we actually
cheated a little: while $P \Phi_0$ is smooth in $\{(t,x)\,:\, t > 0, x\in \T^3\}$ and vanishes when $t < 0$,
it is of course singular on the time-$0$ hyperplane $\{(0,x)\,:\, x\in \T^3\}$. Similarly, we have the
problem that the function $\one_{t > 0}$ does not belong to any $\CD^\gamma$.
Both of these problems can be cured at once
by introducing weighted versions of the spaces $\D^\gamma$ allowing for singularities on
a given hyperplane. A precise definition of these spaces and their behaviour under multiplication and
the action of the integral operator $\CK$ can be found in \cite{Regular}. For the purpose of the informal
discussion given here, we will simply ignore this problem.

As in Section~\ref{sec:classical}, we furthermore introduce a 
new symbol $\sXi$ which will be used to represent the noise $\xi$.
This suggests that the formulation of \eref{e:Phi4} in a suitable (weighted) space
$\CD^\gamma$ should be of the form
\begin{equ}[e:abstractFull]
\Phi = \CP \one_{t>0}\bigl(\sXi - \Phi^3\bigr) + P \Phi_0\;,
\end{equ}
where we set $\CP = \CK + \hat \CK$. In view of \eref{e:defKf}, for $t>0$ this equation is of the type
\begin{equ}[e:abstract]
\Phi = \CI \bigl(\sXi - \Phi^3\bigr) + (\ldots)\;,
\end{equ}
where the terms $(\ldots)$ consist of functions that take values in the subspace 
$\bar T\subset T$ spanned by the regular Taylor monomials $\X^k$. 
In order to build a regularity structure in which \eref{e:abstract} can be formulated, 
it is natural to start with the structure given by abstract polynomials (again with the parabolic scaling
which causes the abstract ``time'' variable to have homogeneity $2$ rather than $1$), and to add
a symbol $\sXi$ to it which we postulate to have homogeneity $-{5\over 2}^{-}$, where we denote by $\alpha^-$ an
exponent strictly smaller than, but arbitrarily close to, the value $\alpha$.

We then simply add to $T$ all of the formal expressions that an application of the right hand side of \eref{e:abstract}
can generate for the description of $\Phi$, $\Phi^2$, and $\Phi^3$. The homogeneity of a given expression is 
completely determined by the rules $|\CI \tau| = |\tau| + 2$ and $|\tau \bar \tau| = |\tau| + |\tau|$.
More precisely, we consider a collection $\CU$ of formal expressions which is the
smallest collection containing $\X^k$ and $\CI(\sXi)$, and such that 
\begin{equ}[e:induction]
\tau_1,\tau_2,\tau_3 \in \CU \quad\Rightarrow\quad \CI(\tau_1\tau_2\tau_3) \in \CU\;,
\end{equ}
where it is understood that $\CI(X^k) = 0$ for every multiindex $k$.
We then set 
\begin{equ}[e:defCW]
\CW = \{\sXi\} \cup \{\tau_1\tau_2\tau_3\,:\, \tau_i \in \CU\}\;,
\end{equ}
and we define our space $T$ as the set of all linear combinations of elements in $\CW$. 
(Note that since $\1 \in \CU$, one does in particular have $\CU \subset \CW$.)
Naturally, $\CT_\alpha$ consists of those linear combinations that only involve
elements in $\CW$ that are of homogeneity $\alpha$. 
It is not too difficult to convince oneself that, 
for every $\alpha \in \R$, $\CW$ contains only
finitely many elements of homogeneity less than $\alpha$, so that each $\CT_\alpha$
is finite-dimensional.

In order to simplify expressions later, we will use the following shorthand graphical 
notation for elements of $\CW$. For $\sXi$, we simply draw a dot.
The integration map is then represented by a downfacing line and the multiplication of 
symbols is obtained by joining them at the root. For example, we have
\begin{equ}
\CI(\sXi) = \<1>\;,\quad
\CI(\sXi)^3 = \<3>\;,\quad
\CI(\sXi)\CI(\CI(\sXi)^3) = \<31>\;.
\end{equ}
Symbols containing factors of $\X$ have no particular graphical representation, so we
will for example write $\X_i \CI(\sXi)^2 = \X_i\<2>$. With this notation, the space $T$ is 
given by
\begin{equs}
T &= \langle \sXi, \<3>, \<2>, \<32>,\<1>, \<31>, \<22>, \X_i \<2>, \1, \<30>, \<21>, \ldots\rangle\;,
\end{equs}
where we ordered symbols in increasing order of homogeneity and used $\scal{\cdot}$ to denote
the linear span.
Given any sufficiently regular function $\xi$ (say a continuous space-time function), there is then
a canonical way of lifting $\xi$ to a model $\LL(\xi) = (\Pi,\Gamma)$ for $T$ by setting
\minilab{e:canonical}
\begin{equ}[e:basic]
\bigl(\Pi_x \sXi\bigr)(y) = \xi(y)\;,\qquad \bigl(\Pi_x \X^k\bigr)(y) = (y-x)^k\;,
\end{equ}
and then recursively by
\minilab{e:canonical}
\begin{equ}[e:product]
\bigl(\Pi_x \tau \bar \tau\bigr)(y) = \bigl(\Pi_x \tau\bigr)(y)\cdot \bigl(\Pi_x \bar \tau\bigr)(y)\;,
\end{equ}
as well as \eref{e:defAdmissible}. (Note that here we used $x$ and $y$ as notations for generic
space-time points in order to keep notations compact.)

\subsection{Construction of the structure group}
\label{sec:structure}

So far, we have only described the vector space $T$ arising in the construction of a regularity 
structure suitable for the analysis of the dynamical $\Phi^4_3$ model, but we have not yet 
described the corresponding structure group $G$.
The reason why a non-trivial group $G$ is needed is that in \eqref{e:defAdmissible},
as soon as $|\CI(\tau)| > 0$, $\Pi_z \CI(\tau)$ depends non-trivially on the base point $z$,
so that $G$ is needed in order to be able to enforce the algebraic relation
$\Pi_{\bar z} = \Pi_{z}\Gamma_{z\bar z}$ of Definition~\ref{def:model}, which should
be interpreted as the action of ``reexpanding'' a ``Taylor series'' around a different point.

In our case, in view of \eref{e:defAdmissible}, the coefficients of these reexpansions will naturally be
some polynomials in $x$ and in the expressions appearing in \eref{e:defJ}. This suggests that we should
define a space $T^+$ whose basis vectors consist of formal expressions of the type
\begin{equ}[e:genPolynom]
X^k \prod_{i=1}^N \II_{\ell_i}(\tau_i)\;,
\end{equ}
where $N$ is an arbitrary but finite number, the $\tau_i$ are basis elements of $T$
different from the Taylor monomials $\X^k$,
and the $\ell_i$ are $d$-dimensional multiindices satisfying $|\ell_i| < |\tau_i| + 2$. 
(The last bound is a reflection of the restriction of the summands in \eref{e:defJ} with $\beta = 2$.)
The space $T^+$ also admits a natural graded structure $T^+ = \bigoplus T^+_\alpha$ by setting
\begin{equ}
|\II_{\ell}(\tau)| = |\tau| + 2 - |\ell|\;,\qquad |X^k| = |k|\;,
\end{equ}
and by postulating that the degree of a product is the sum of the degrees. Unlike in the case of $T$ however,
elements of $T^+$ all have strictly positive homogeneity, except for the empty product $\one$ which we postulate
to have degree $0$.

To any given admissible model $(\Pi,\Gamma)$, it is then natural to associate linear maps
$f_x \colon T^+ \to \R$ by $f_x (X^k) = (-x)^k$, $f_x(\sigma \bar \sigma) = f_x(\sigma) f_x(\bar \sigma)$, and
\begin{equ}[e:deffx]
f_x (\II_{\ell}\tau) = -\sum_{|k+\ell| < |\tau|+2} {(-x)^k\over k!} \int D^{(\ell+k)} K(x-y)\, \bigl(\Pi_x \tau\bigr)(dy)\;.
\end{equ}
The minus signs are here purely by convention and serve to make some expressions simpler later on.\footnote{This definition differs from that in \cite{Regular} by a simple change of basis in $T^+$ and,
while the inclusion of the sum over $k$ in \eqref{e:deffx} may not appear very natural here, it leads
to more natural expressions for the maps $\Delta$ and $\Deltap$ below.}
It then turns out that it is possible to build a linear map $\Delta \colon T \to T \otimes T^+$ such that
if we define $F_x \colon T \to T$ by
\begin{equ}[e:defineaction]
F_x \tau = (\id \otimes f_x)\Delta \tau\;,
\end{equ}
where $I$ denotes the identity operator on $T$, then these maps are invertible and,
given an admissible model $(\Pi,\Gamma)$,
$\Pi_x F_x^{-1}$ is independent of $x$. If this is the case, then one can then recover 
the maps $\Gamma_{xy}$ by
\begin{equ}[e:defGammaxy]
\Gamma_{xy} = F_x^{-1} \circ F_y\;.
\end{equ}
The ``correct'' definition for the map $\Delta$ compatible with  
\eqref{e:defAdmissible} and \eqref{e:deffx} is then given by
\begin{equ}
\Delta \1 = \1 \otimes \one\;,\qquad \Delta \sXi = \sXi \otimes \one\;,\qquad \Delta \X = \X \otimes X\;,
\end{equ}
and then recursively by
\begin{equ}
\Delta(\tau \bar \tau) = (\Delta \tau)(\Delta \bar \tau)\;,\qquad 
\Delta \CI(\tau) = (\CI \otimes \id) \Delta \tau + \sum_k {\X^k\over k!} \otimes \II_k(\tau)\;.
\end{equ}

\begin{remark}
This definition shows that the convention $\CI(\X^k) = 0$
is compatible with the convention $\II_\ell(\X^k) = 0$.
\end{remark}

The identities \eqref{e:defineaction} and \eqref{e:defGammaxy} and the definition of $f_x$ suggest
that one should take for $G$ the set of all linear maps of the type
\begin{equ}
\Gamma_f \tau = (\id \otimes f)\Delta \tau\;,
\end{equ}
where $f$ is a multiplicative linear functional on $T^+$.
Before we show that $G$ does indeed form a group, we argue that if $\Pi_x$ is defined
recursively as above and $F_x$ is given by \eqref{e:defineaction}, then there exists
a single linear map $\PPi\colon T \to \CS'$ such that
\begin{equ}[e:idenPPi]
\Pi_x\tau  = \PPi F_x \tau \qquad \forall \tau \in T\;.
\end{equ}

We can simply exhibit $\PPi$ explicitly. Set
\begin{equ}[e:defPPi1]
\bigl(\PPi \sXi\bigr)(x) = \xi(x)\;, \qquad
\bigl(\PPi \X^k\bigr)(x) = x^k\;,
\end{equ}
and then recursively
\begin{equ}[e:recPPi]
\PPi \tau \bar \tau = \PPi \tau \cdot \PPi \bar \tau\;,\qquad \PPi \CI \tau = K * \PPi \tau\;.
\end{equ}
Note that this is very similar to the definition of $\LL(\xi)$, with the notable exception
that \eref{e:defAdmissible} is replaced by the more ``natural'' identity $\PPi \CI \tau = K * \PPi \tau$.
It is then a simple exercise in binomial identities to show that indeed
$\Pi_x \X^k = \PPi F_x \X^k$ and that, assuming that \eqref{e:idenPPi} holds for some $\tau$ and that 
$f_x$ is given by \eqref{e:deffx}, one also
has $\Pi_x \CI(\tau) = \PPi F_x \CI(\tau)$. Finally, since all relevant objects are multiplicative,
one can see that if \eqref{e:idenPPi} holds for symbols $\tau$ and $\bar \tau$, then it must also 
hold for their product $\tau \bar \tau$.

We now argue that $G$ as defined above actually forms a group, so that in particular the maps
$F_x$ are invertible. Indeed, if we define a map $\Deltap \colon T^+ \to T^+\otimes T^+$
very similarly to $\Delta$ by
\begin{equ}
\Deltap \one = \one \otimes \one\;,\qquad \Deltap X = X \otimes X\;,
\end{equ} 
and then recursively by
\begin{equ}[e:defDeltap]
\Deltap(\sigma \bar \sigma) = (\Deltap \sigma)(\Deltap \bar \sigma)\;,\qquad 
\Deltap \II_\ell(\tau) = (\II_\ell \otimes \id) \Delta \tau + \sum_k {X^k\over k!} \otimes \II_{\ell+k}(\tau)\;,
\end{equ}
then it can be verified that this map intertwines with $\Delta$ via the relations
\begin{equ}[e:propDelta]
(\Delta \otimes I)\Delta = (I \otimes \Deltap)\Delta\;,\qquad 
(\Deltap \otimes I)\Deltap = (I \otimes \Deltap)\Deltap\;.
\end{equ}
We then define a product $\circ$ on the space of linear functionals $f \colon T^+ \to \R$ by
\begin{equ}
(f \circ g)(\sigma) = (f \otimes g)\Deltap \sigma\;.
\end{equ}
If we furthermore denote by $\Gamma_f$ the operator $T$ associated to any such linear functional as in 
\eref{e:defineaction}, the first identity of \eref{e:propDelta} 
yields the identity $\Gamma_f \Gamma_g = \Gamma_{f\circ g}$. The first identity of \eref{e:defDeltap} furthermore
ensures that if $f$ and $g$ are both multiplicative in the sense that $f(\sigma \bar \sigma) = f(\sigma) f(\bar \sigma)$,
then $f\circ g$ is again multiplicative. It also turns out that every multiplicative linear functional $f$ admits a 
unique inverse $f^{-1} = \CA f$ for some linear map $\CA \colon T \to T$ 
such that $f^{-1} \circ f = f\circ f^{-1} = e$, where $e \colon T^+ \to \R$
maps every basis vector of the form \eref{e:genPolynom} to zero, except for $e(\one) = 1$.
The element $e$ is neutral in the sense that $\Gamma_e$ is the identity operator.

It is now natural to define the structure group $G$ associated to $T$ as the set of all multiplicative
linear functionals on $T^+$, acting on $T$ via \eref{e:defineaction}. Furthermore, for any admissible model,
one has the identity
\begin{equ}
\Gamma_{xy} = F_x^{-1} F_y = \Gamma_{\gamma_{xy}}\;,\qquad \gamma_{xy} = f_x^{-1} \circ f_y\;.
\end{equ}

Returning to the relation between $\Pi_x$ and $\PPi$, we
showed actually more, namely that the knowledge of $\PPi$ and the 
knowledge of $(\Pi,\Gamma)$ are equivalent. Inedeed, one the  one hand one has
$\PPi = \Pi_x F_x^{-1}$ and the map $F_x$ can be recovered from $\Pi_x$ by \eref{e:deffx} and \eqref{e:defineaction}. 
On the other hand however, one also has of course $\Pi_x = \PPi F_x^{-1}$
and, if we equip $T$ with an adequate recursive structure (denote by 
$|\tau|_I$ the number
of times $\CI$ appears in the symbol $\tau$), then 
it is possible to show that the determination of $F_x^{-1} \tau$ (and therefore of $\Pi_x \tau$)
only requires knowledge of $f_x(\II_k(\sigma))$ (and therefore of $\Pi_x\sigma$) for
symbols $\sigma$ with $|\sigma|_I \le |\tau|_I -1$.

Furthermore, the translation $(\Pi,\Gamma) \leftrightarrow \PPi$ outlined above works for 
\textit{any} admissible model and does not at all rely on the fact that it was built by 
lifting a continuous function. In particular, it does \textit{not} rely on 
the fact that $\Pi_x$ and $\PPi$ are multiplicative. In the general case,
the first identity in \eref{e:recPPi} may then of course fail to be true, even
if $\PPi\tau$ happens to be a continuous function for every $\tau \in T$. 
The only reason why our definition of an admissible model does not simply consist of the
single map $\PPi$ is that there seems to be no simple way of describing the topology
given by Definition~\ref{def:model} in terms of $\PPi$.

\subsection{Renormalisation of the dynamical \texorpdfstring{$\Phi^4_3$}{Phi43} model}
\label{sec:Phi}

Combining Theorem~\ref{theo:mult} and Theorem~\ref{theo:Schauder}, we see that the 
map 
\begin{equ}
\Phi \mapsto \CP \bigl(\sXi - \Phi^3\bigr)\;,
\end{equ}
is a continuous map from $\CD^\gamma$ into itself, provided that $\gamma > 1+2\kappa$.
Actually, one obtains a regularity improvement in the sense that it maps
$\CD^\gamma$ into $\CD^{\bar \gamma}$ for some $\bar \gamma > \gamma$. In way
reminiscent of usual parabolic PDE techniques, one can leverage this regularity
improvement to build spaces $\CD^{\gamma,\eta}$ (essentially weighted versions of
the spaces $\CD^\gamma$ which allow coefficients to become singular near
times $0$) such that the following holds.

\begin{theorem}\label{theo:localSol}
For every $\alpha > -{2\over 3}$ there exist exponents $\gamma$ and $\eta$ such that,
for every $\Phi_0 \in \CC^\alpha$ and every admissible model, \eref{e:abstractFull} 
admits a unique local solution in $\CD^{\gamma,\eta}$. Furthermore, this solution
depends continuously on both $\Phi_0$ and the underlying model.
\end{theorem}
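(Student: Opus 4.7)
The plan is to recast \eqref{e:abstractFull} as a fixed point problem for the map $\CM_T(\Phi) = \CP\one_{t>0}\bigl(\sXi - \Phi^3\bigr) + P\Phi_0$ on a small time interval and to apply the Banach fixed point theorem in a suitable weighted analogue of $\CD^\gamma$. First I would pin down the exponents. The noise $\sXi$ has homogeneity $-\frac{5}{2}-\kappa$ for some small $\kappa>0$, so $\CI(\sXi)$ has homogeneity $\alpha_0 := -\frac{1}{2}-\kappa$. Any candidate solution of \eqref{e:abstractFull} must accordingly be sought in a space of modelled distributions taking values in $T_{\geq \alpha_0}$. Applying Theorem~\ref{theo:mult} twice gives $\Phi^3 \in \CD^{\gamma-1-2\kappa}_{3\alpha_0}$, and then Theorem~\ref{theo:Schauder} with $\beta=2$ yields $\CP\bigl(\sXi - \Phi^3\bigr) \in \CD^{\gamma+1-2\kappa}_{\alpha_0}$. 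Provided $\gamma > 1+2\kappa$ (so that the reconstruction theorem applies to $\Phi^3$), this produces a strict regularity gain $\theta = 1-3\kappa > 0$, which is the mechanism that will drive the contraction on short time intervals.

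The second step is to work in \emph{weighted} spaces $\CD^{\gamma,\eta}$ allowing a controlled blow-up of coefficients as $t \downarrow 0$, as defined in \cite{Regular}. The exponent $\eta$ has to be chosen so that (i) the harmonic extension $P\Phi_0$ of an initial condition $\Phi_0 \in \CC^\alpha$ can be lifted to an element of $\CD^{\gamma,\eta}$ by its pointwise Taylor expansion, which forces $\eta \leq \alpha$; (ii) multiplication by $\one_{t>0}$ is continuous; and (iii) after cubing, the weighted analogue of Theorem~\ref{theo:mult} still produces an object which, via the weighted analogue of Theorem~\ref{theo:Schauder}, can be sent back into $\CD^{\gamma,\eta}$. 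The constraint $\alpha > -\frac{2}{3}$ is exactly what is needed for this system of inequalities (involving $\gamma$, $\eta$, $3\eta$ and $\alpha_0$) to admit a solution in the regime $\kappa \ll 1$; the critical point $\alpha = -\frac{2}{3}$ is where the cube of the weight saturates the permissible blow-up of $\CD^{\gamma,\eta}$ near the initial hyperplane.

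Once the exponents are fixed, the contraction argument is essentially mechanical: by the weighted versions of Theorems~\ref{theo:mult} and \ref{theo:Schauder} one has, for $\Phi,\bar\Phi$ in a ball of radius $R$ in $\CD^{\gamma,\eta}([0,T])$,
\begin{equ}
\|\CM_T(\Phi) - \CM_T(\bar\Phi)\|_{\gamma,\eta;T} \lesssim T^\theta\bigl(1+R^2\bigr)\,\|\Phi - \bar\Phi\|_{\gamma,\eta;T}\;,
\end{equ}
and similarly $\|\CM_T(\Phi)\|_{\gamma,\eta;T} \lesssim \|\Phi_0\|_{\CC^\alpha} + T^\theta(\|\sXi\|+R^3)$, where the norms on the right depend on the underlying admissible model. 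Choosing $R$ large (determined by $\Phi_0$ and the model) and $T$ small then gives stability of the ball and strict contraction, whence a unique fixed point.

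Continuous dependence on $(\Phi_0,\Pi,\Gamma)$ follows from the same estimates applied to two triples simultaneously, using the quantitative continuity of multiplication and of $\CK$ with respect to the natural distance on $\MM \ltimes \CD^{\gamma,\eta}$ mentioned after Theorem~\ref{theo:reconstruction}. The main obstacle I expect is not the fixed point step itself but precisely the bookkeeping of weighted spaces: one must verify that all three operations (lifting of $\Phi_0$, multiplication by $\one_{t>0}$, abstract cubing) produce weights lying in the admissible window around $-2$, and that the time gain $T^\theta$ survives the passage to the weighted setting. Once this is done, the proof is a standard contraction mapping argument, and the dependence on the model is inherited term by term.
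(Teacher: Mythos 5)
Your proposal follows the same route as the paper, which itself only sketches this theorem and defers all details to \cite{Regular}: lift to the weighted spaces $\CD^{\gamma,\eta}$, use multiplication and Schauder estimates to show the right-hand side map gains regularity, and close a contraction argument via a factor $T^\theta$ picked up from the weighted Schauder estimate, with the threshold $\alpha > -2/3$ coming from the requirement that the weight of the cube, which is essentially $3\eta$, stay strictly above $-2$. Two small inaccuracies worth flagging: the regularity gain is $1-2\kappa$, not $1-3\kappa$ (you have $\Phi^3\in\CD^{\gamma-1-2\kappa}$ and then $\CP\Phi^3\in\CD^{\gamma+1-2\kappa}$), and in your stability estimate the term $\CP\one_{t>0}\sXi$ does not acquire a factor $T^\theta$; it contributes an $\eps$-independent model-dependent constant, so the bound should read something like $\|\CM_T(\Phi)\|_{\gamma,\eta;T}\lesssim \|\Phi_0\|_{\CC^\alpha}+C_{\mathrm{model}}+T^\theta R^3$, which still closes the fixed point argument with $R$ chosen large in terms of $\Phi_0$ and the model and $T$ small. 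Neither issue affects the correctness of the overall argument.
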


\begin{remark}
One hits some minor technical difficulties in the actual definition of local solutions
to \eqref{e:abstractFull}, but these are resolved in \cite{Regular}. One can also show
that solutions can be continued either forever or up to a time where the $\CC^\alpha$
solution of the solution explodes. In this particular instance, one can actually show
that solutions do not blow up, as shown in \cite{GlobalSols}.
\end{remark}

Given a model of the type $\LL(\xi)$ constructed as in the previous subsection
for a continuous space-time function $\xi$, 
it then  follows from \eref{e:product}
and the admissibility of $\LL(\xi)$ that the associated reconstruction operator satisfies the properties
\begin{equ}
\CR \CK f = K * \CR f \;,\qquad \CR (fg) = \CR f \cdot \CR g\;,
\end{equ} 
as long of course as all the functions to which $\CR$ is applied belong to $\D^\gamma$ for some $\gamma > 0$
so that $\CR f$ is uniquely defined by \eqref{e:boundRf}.
As a consequence, applying the reconstruction operator $\CR$ to both sides of  \eref{e:abstractFull},
we see that if $\Phi$ solves \eref{e:abstractFull} then 
$\CR \Phi$ solves the integral equation \eqref{e:intPhi4},
which thus yields the unique classical solution to \eref{e:Phi4}.
Note however that this identification of solutions to \eqref{e:abstractFull} with
solutions to \eqref{e:Phi4} relies crucially on the identity
$\CR(\Phi^3) = (\CR \Phi)^3$. This identity has no reason whatsoever 
to hold for a generic admissible model! In particular, while \eqref{e:abstractFull} makes sense
for \textit{any} admissible model, we do not know in general whether its solutions still solve
a local PDE.

At this stage, the situation is as follows. For any \textit{continuous} realisation $\xi$ of the driving noise,
we have factored the solution map $(\Phi_0,\xi) \to \Phi$ associated to \eref{e:Phi4} into maps
\begin{equ}
(\Phi_0,\xi) \to \bigl(\Phi_0,\LL(\xi)\bigr) \to \Phi \to \CR \Phi \;,
\end{equ}
where the middle arrow corresponds to the solution to \eref{e:abstractFull} 
built in Theorem~\ref{theo:localSol}.
The advantage of such a factorisation is that the last two arrows yield \textit{continuous} maps, even in topologies
sufficiently weak to be able to describe driving noise having the lack of regularity of space-time white noise.
The only arrow that isn't continuous in such a weak topology is the first one. We also hope that the
reader is convinced
that a similar construction can be performed for a very large class of semilinear stochastic PDEs.
In particular, the KPZ equation can also be analysed in this framework. 

Given this construction, one is lead naturally to the following 
question: given a sequence $\xi_\eps$ of ``natural'' regularisations
of space-time white noise, do the lifts $\LL(\xi_\eps)$ converge in 
probably in a suitable space of admissible models? Unfortunately, unlike in the case of the theory of rough paths
where this is very often the case (but see \cite{KPZ} for an example where it fails there too), 
the answer to this question in the context of SPDEs is often an emphatic \textit{no}.
Indeed, if it were the case for the dynamical $\Phi^4_3$ model,
 then one could have chosen the constant $C_\eps$ to be independent of $\eps$ in 
\eref{e:Phi43Regular}, which is certainly not the case.

The way in which we are able to circumvent the fact that $\LL(\xi_\eps)$ does not converge to a limiting model
as $\eps \to 0$ is to consider instead a sequence of \textit{renormalised} models. The main idea
is to exploit the fact that our definition of a model does not impose the identity
\eref{e:product}, even in situations where $\xi$ itself happens to be a continuous function. 
One question that then imposes itself is: what are the natural ways of ``deforming'' the usual
product which still lead to an admissible model? It turns out that the regularity structure
whose construction was sketched above comes equipped with a natural \textit{finite-dimensional} 
group of continuous transformations $\RR$ on its space of admissible models (henceforth called the 
``renormalisation group''), which essentially amounts
to the space of all natural deformations of the product. It then turns out that even though 
$\LL(\xi_\eps)$ does not converge, it is possible to find a sequence $M_\eps$ of elements in $\RR$ such that 
the sequence $M_\eps \LL(\xi_\eps)$ converges to a limiting model $(\hat \Pi, \hat \Gamma)$.
Unfortunately, the elements $M_\eps$ no \textit{not} preserve the image of $\LL$ in the space of
admissible models. As a consequence, when solving the fixed point map \eref{e:abstractFull} 
with respect to the model $M_\eps \LL(\xi_\eps)$ and inserting the solution into the reconstruction operator,
it is not clear \textit{a priori} that the resulting function (or distribution) can again be interpreted
as the solution to some modified PDE. It turns out however that
this is again the case and the modified equation is precisely given by \eref{e:Phi43Regular}, 
where $C_\eps$ is 
some linear combination of the two constants appearing in the description of $M_\eps$.

There are now three questions that remain to be answered:
\begin{enumerate}
\item How does one construct the renormalisation group $\RR$?
\item How does one derive the new equation obtained when renormalising a model?
\item What is the right choice of $M_\eps$ ensuring that the renormalised models converge?
\end{enumerate}

\subsection{The renormalisation group}

In order to build the group $\RR$, it turns out to be appropriate to describe its action 
first at the level of $\PPi$ rather than at the level of $(\Pi,\Gamma)$. 
At this stage we note that if $\xi$ happens to be a stationary stochastic process and $\PPi$ is built
from $\xi$ by \eqref{e:defPPi1} and \eqref{e:recPPi}, 
then $\PPi \tau$ is also a stationary stochastic process for every
$\tau \in T$. In order to define $\RR$, it is natural to consider only transformations of the space of admissible
models that preserve this property. Since we are not in general allowed to multiply components of $\PPi$,
the only remaining operation is to form linear combinations between them. 
It is therefore natural to describe elements of $\RR$ 
by linear maps $M \colon T \to T$ and to postulate their action on admissible models by 
$\PPi \mapsto \PPi^M$ with
\begin{equ}
\PPi^M \tau = \PPi M \tau\;.
\end{equ}
It is not clear \textit{a priori} whether given such a map $M$ and an admissible model
$(\Pi,\Gamma)$ there is a coherent way of building a new model $(\Pi^M, \Gamma^M)$ such that 
$\PPi^M$ is the map associated to $(\Pi^M, \Gamma^M)$ as above. It turns out that one has the following statement:

\begin{proposition}\label{prop:transform}
In the above context, for every linear map $M \colon T \to T$ commuting with $\CI$ and multiplication
by $X^k$, there exist \textit{unique}
linear maps $\Delta^M \colon T \to T \otimes T^+$ and $\hat \Delta^M \colon T^+ \to T^+ \otimes T^+$ such that if we set
\begin{equ}
\Pi_x^M \tau = \bigl(\Pi_x \otimes f_x\bigr)\Delta^M \tau \;,\qquad \gamma_{xy}^M(\sigma) = (\gamma_{xy} \otimes f_x)\hat \Delta^M\sigma\;,
\end{equ}
then $\Pi_x^M$ satisfies again \eref{e:defAdmissible} and the identity $\Pi_x^M \Gamma_{xy}^M = \Pi_y^M$.
\end{proposition}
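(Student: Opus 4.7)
My plan is to use the equivalence between admissible models $(\Pi,\Gamma)$ and their ``big'' maps $\PPi$ recalled in Section~\ref{sec:structure}. Given $M$ as in the statement, the natural candidate for the renormalised big map is $\PPi^M := \PPi \circ M$; because $M$ commutes with $\CI$ and with multiplication by the $\X^k$, one checks immediately that
\begin{equ}
\bigl(\PPi^M \X^k\bigr)(x) = x^k\;,\qquad \PPi^M \CI\tau = \PPi \CI M\tau = K * \PPi M\tau = K * \PPi^M \tau\;,
\end{equ}
so $\PPi^M$ is already the big map of some admissible model, unique by the equivalence. The substantive content of the proposition is the claim that this model can be recovered in the specific form $\Pi_x^M = (\Pi_x \otimes f_x)\Delta^M$, $\gamma_{xy}^M = (\gamma_{xy}\otimes f_x)\hat\Delta^M$ with linear maps $\Delta^M$, $\hat\Delta^M$ that depend only on $M$, not on the original model.

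I would prove uniqueness first, by reading off the recursive definitions from the imposed identities. Admissibility forces $\Delta^M\X^k = \X^k\otimes\one$, the identity $\PPi^M\sXi = \PPi(M\sXi)$ forces $\Delta^M \sXi$, and the admissibility relation for $\CI$ yields a recursive formula of the shape
\begin{equ}
\Delta^M \CI\tau = (\CI \otimes \id)\Delta^M\tau + \sum_{|k|<|\tau|+2}\frac{\X^k}{k!}\otimes \II^M_k(\tau)\;,
\end{equ}
where $\II^M_k(\tau)\in T^+$ is the unique element whose pairing against $f_x$ reproduces the Taylor coefficients in \eqref{e:defJ} computed from $\PPi M$ rather than from $\PPi$. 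The requirement $\PPi M (\tau\bar\tau) = \PPi M\tau \cdot \PPi M\bar\tau$ (whenever it makes sense at the analytical level) then forces a corresponding combinatorial rule for $\Delta^M(\tau\bar\tau)$. An analogous downward induction on $T^+$, using the recursion \eqref{e:defDeltap} for $\Deltap$ together with the constraint $\Pi_x^M\Gamma_{xy}^M = \Pi_y^M$, pins $\hat\Delta^M$ down uniquely.

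For existence I would take these recursions as the definitions of $\Delta^M$ and $\hat\Delta^M$ and verify two families of statements by induction on the number $|\tau|_\CI$ of occurrences of $\CI$ in $\tau$. First, the analytic bounds \eqref{e:bounds} for $(\Pi^M, \Gamma^M)$, which reduce to the analogous bounds for $(\Pi, \Gamma)$ together with continuity of $M$. Second, the intertwining identities
\begin{equ}
(\Delta^M \otimes \id)\Delta^M = (\id \otimes \hat\Delta^M)\Delta^M\;,\qquad (\hat\Delta^M \otimes \id)\hat\Delta^M = (\id \otimes \hat\Delta^M)\hat\Delta^M\;,
\end{equ}
which are the exact analogues of \eqref{e:propDelta} and which encode both the key identity $\Pi_x^M \Gamma_{xy}^M = \Pi_y^M$ and the group property $\Gamma_{xy}^M\Gamma_{yz}^M = \Gamma_{xz}^M$.

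The hard part will be this second family. The recursive definition of $\Delta^M\CI\tau$ couples a ``geometric'' piece $(\CI\otimes\id)\Delta^M\tau$ to Taylor-series corrections generated by $\PPi M$, and one must check that these corrections reassemble correctly through $\hat\Delta^M$ so as to preserve the intertwining relations; this reduces, after expanding the recursions on both sides, to an identity that can be verified term by term using the inductive hypothesis and the corresponding properties of $\Delta$ and $\Deltap$. This is also precisely the step at which the commutation hypotheses on $M$ are indispensable: without them, already the starting point $\PPi^M = \PPi M$ would fail the admissibility identity for $\CI$, and no algebraic bookkeeping at the level of $\Delta^M$ could possibly repair it.
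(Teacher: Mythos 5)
Your overall strategy — work at the level of the ``big'' map $\PPi$, set $\PPi^M := \PPi M$, extract $\Delta^M$ and $\hat\Delta^M$ by recursion, and verify the required identities by induction on the number of integration symbols — is the correct one, and you rightly identify the substantive content of the proposition as the universality of $\Delta^M$ and $\hat\Delta^M$ (they must depend only on $M$, not on the model). But there is a genuine gap where you specify $\Delta^M$ on products. You write that ``the requirement $\PPi M (\tau\bar\tau) = \PPi M\tau \cdot \PPi M\bar\tau$ \ldots forces a corresponding combinatorial rule for $\Delta^M(\tau\bar\tau)$.'' This requirement is simply false: $M$ is not a multiplicative map on $T$ (e.g.\ with $M = \exp(-C L_1)$ one has $M\<2> = \<2> - C\1$ while $M\<1>\cdot M\<1> = \<2>$), and indeed the entire purpose of renormalisation is to deform the naive product, as the paper emphasises (``the first identity in \eqref{e:recPPi} may then of course fail to be true''). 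The actual constraint pinning down $\Delta^M(\tau\bar\tau)$ is only $\PPi^M(\tau\bar\tau) = \PPi\bigl(M(\tau\bar\tau)\bigr)$, which cannot be disassembled into a pointwise product of $\Delta^M\tau$ and $\Delta^M\bar\tau$. Consequently the correct recursion for $\Delta^M$ is not built out of the coalgebra structure of $\Delta^M$ itself applied factor by factor; rather it involves first applying $M$ (and a suitable extension $\hat M$ of $M$ to $T^+$) and then the \emph{original} coproducts $\Delta$, $\Deltap$. Without this the construction does not close, and the intertwining identities you propose at the end, $(\Delta^M\otimes\id)\Delta^M = (\id\otimes\hat\Delta^M)\Delta^M$ and its $\hat\Delta^M$-analogue, are not the relevant relations either: the relations that actually encode $\Pi_x^M\Gamma_{xy}^M = \Pi_y^M$ and the group law for $\Gamma^M$ necessarily involve both the original and the renormalised coproducts, and are considerably more intricate.

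Two smaller remarks. First, the proposition only asserts the \emph{algebraic} identities (admissibility and $\Pi_x^M\Gamma_{xy}^M = \Pi_y^M$); the analytic bounds are deliberately deferred to the definition of the renormalisation group $\RR$ via \eqref{e:defRenorm}, so they should not be part of this proof. Relatedly, your opening claim that ``$\PPi^M$ is already the big map of some admissible model'' overstates the situation: the translation $\PPi \leftrightarrow (\Pi,\Gamma)$ produces a pair satisfying the algebraic identities, but admissibility as a \emph{model} would additionally require the analytic bounds, which are precisely what fails for arbitrary $M$. Second, the hypothesis ``$M$ commutes with multiplication by $X^k$'' by itself does not give $M\X^k = \X^k$ unless one also knows $M\1 = \1$; this is true for the renormalisation maps used in the paper but should be made explicit if you rely on $(\PPi^M\X^k)(x) = x^k$.
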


At this stage it may look like \textit{any} linear map $M \colon T \to T$ commuting with $\CI$ and multiplication by $X^k$
yields a transformation on the space
of admissible models by Proposition~\ref{prop:transform}. This however is not true since we have completely
disregarded the \textit{analytical} bounds that every model has to satisfy.
It is clear from Definition~\ref{def:model} that these are satisfied in general 
if and only if, for every symbol $\tau$, $\Pi_x^M \tau$ is a linear combination
of the $\Pi_x \bar\tau$ only involving symbols $\bar \tau$ with $|\bar\tau| \ge |\tau|$. 
This suggests the following definition. 

\begin{definition}
The renormalisation group $\RR$ consists of the set of linear maps $M \colon T \to T$ commuting with $\CI$
and with multiplication by $X^k$,
such that for $\tau \in T_\alpha$ and $\sigma \in T_\alpha^+$, one has 
\begin{equ}[e:defRenorm]
\Delta^M \tau - \tau \otimes \one \in  T_{>\alpha} \otimes T^+\;,\qquad
\hat \Delta^M \sigma - \sigma \otimes \one \in T_{>\alpha}^+ \otimes T^+\;.
\end{equ}
Its action on the space of admissible models is given by Proposition~\ref{prop:transform}.
\end{definition}

\begin{remark}
It turns out that the second condition of \eqref{e:defRenorm} is actually a consequence
of the first one, see the appendix in \cite{Jeremy}.
\end{remark}

\subsection{The renormalised equations}
\label{sec:renorm}

In the case of the dynamical $\Phi^4_3$ model considered in this article, 
it turns out that in general (i.e.\ if we also want to cover the situation mentioned
in the introduction where $\xi$ is approximated by a rescaled non-Gaussian stationary process)
we need a five-parameter subgroup of $\RR$
to renormalise the equations. More precisely, we consider elements $M \in \RR$ of the form
$M = \exp(- \sum_{i=1}^5 C_i L_i)$, where the generators $L_i$ are 
determined by the substitution rules
\begin{equ}
L_1 \colon \<2> \mapsto \1\;,\quad L_2 \colon \<3> \mapsto \1\;,\quad  L_3 \colon \<22> \mapsto \1\;,\quad
L_4 \colon \<32> \mapsto \1\;,\quad L_5 \colon \<31> \mapsto \1\;.
\end{equ}
This should be understood in the sense that if $\tau$ is an arbitrary formal expression,
then $L_1 \tau$ is the sum of all formal expressions obtained from $\tau$ by performing
a substitution of the type $\<2> \mapsto 1$, and similarly for the other $L_i$. For example, one has
\begin{equ}
L_1 \<3> = 3 \<1>\;,\qquad L_1 \<12> = \<10> \;,\qquad L_3 \<32> = 3 \<1>\;,\qquad L_1 \<1> = 0\;.
\end{equ}
The convention $\CI(\1) = 0$ also suggests that one should set for example
$L_2 \<32> = 0$, since the symbol \<32b>, which is what the substitution $ \<3> \mapsto \1$ creates, 
would contain a factor of $\CI(\1)$.
It is easy to see that all of the $L_i$ commute with each other so that, as an abstract group,
the transformations we consider form a copy of $\R^5$ with addition as its group operation.
This is not a general fact however.
One then has the following result:

\begin{proposition}\label{prop:renorm}
The linear maps $M$ of the type just described belong to $\RR$. Furthermore, if $(\Pi,\Gamma)$ is an
admissible model such that $\Pi_x \tau$ is a continuous function for every $\tau \in T$, then one has the
identity
\begin{equ}[e:actionRenorm]
\bigl(\Pi_x^M \tau\bigr)(x) = \bigl(\Pi_x M \tau\bigr)(x)\;.
\end{equ}
\end{proposition}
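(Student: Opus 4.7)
The plan splits into two parts matching the two claims. First I observe that every substitution rule $L_i$ acts trivially on symbols of the form $\X^k$ and on symbols whose outermost operation is $\CI$, since each of its source symbols $\<2>, \<3>, \<22>, \<32>, \<31>$ is a product at the root. Hence $M$ commutes both with multiplication by $\X^k$ and with $\CI$, which is precisely the hypothesis of Proposition~\ref{prop:transform}; this produces canonical maps $\Delta^M$ and $\hat\Delta^M$ yielding admissible transformed models.

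For membership in $\RR$ I would verify the upper triangularity \eqref{e:defRenorm}. The key observation is that each $L_i$ strictly raises homogeneity, since each of its source symbols has strictly negative homogeneity while $|\1| = 0$. By commutativity this property extends to every monomial in the $L_i$, hence $(M-\id)\tau \in T_{>|\tau|}$ for every homogeneous $\tau$. One then argues by induction on $|\tau|_\CI$ that the recursive form of $\Delta^M$ forced by Proposition~\ref{prop:transform} preserves the triangularity $\Delta^M\tau - \tau\otimes\one \in T_{>|\tau|}\otimes T^+$, using the intertwining relations \eqref{e:propDelta}. The analogous condition on $\hat\Delta^M$ is then automatic, as noted in the remark following the definition of $\RR$.

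For the identity \eqref{e:actionRenorm} at continuous models, the plan is to establish the stronger claim
\begin{equ}[e:refinedDelta]
\Delta^M \tau - M\tau\otimes\one \in \bar T_+ \otimes T^+\;,
\end{equ}
where $\bar T_+$ denotes the span of monomials $\X^k$ with $|k|>0$. Granting this, evaluation at $y=x$ of $\Pi_x^M\tau = (\Pi_x\otimes f_x)\Delta^M\tau$ yields $(\Pi_x M\tau)(x)\cdot f_x(\one) = (\Pi_x M\tau)(x)$ from the leading term, while every correction term contributes a factor $(y-x)^k$ with $|k|>0$ from $\Pi_x \X^k$, which vanishes at $y=x$. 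The refined statement \eqref{e:refinedDelta} I would prove by induction on tree complexity: the base cases $\sXi$ and $\X^k$ are fixed by $M$; for $\CI(\tau)$, the only new contributions to $\Delta^M\CI(\tau)$ beyond $(\CI\otimes\id)\Delta^M\tau$ come from the tail $\X^k/k!\otimes\II_k(\cdot)$ of the integration recursion, which already carries a polynomial factor of strictly positive degree; for products $\tau\bar\tau$ one invokes multiplicativity of $\Delta^M$ together with the inductive hypothesis on each factor.

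The main obstacle will be the inductive step for products. Since $M$ is \emph{not} multiplicative (for instance $M(\<1>\cdot\<1>) = \<2> - C_1 \1 \ne \<2> = M\<1>\cdot M\<1>$ when $M = e^{-C_1 L_1}$), one must verify that the leading term of $\Delta^M(\tau\bar\tau)$ assembles precisely into $M(\tau\bar\tau)\otimes\one$ rather than the naive $(M\tau\cdot M\bar\tau)\otimes\one$. This is the combinatorial heart of the argument: it requires a careful analysis of how substitutions at the roots of subtrees recombine across the factorisation $\Delta(\tau\bar\tau) = (\Delta\tau)(\Delta\bar\tau)$, and it is precisely here that the specific choice of rules $L_i$, together with the convention $\CI(\1)=0$ (which forces for example $L_2\<32>=0$), ensures consistency.
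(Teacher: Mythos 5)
Your plan for the first claim (membership in $\RR$) is essentially sound: $M$ commutes with $\CI$ and with multiplication by $\X^k$ because the source symbols of each $L_i$ are products at the root, and $M-\id$ strictly raises homogeneity because the $L_i$ do. One small wording issue: the $L_i$ do \emph{not} ``act trivially'' on symbols of the form $\CI(\sigma)$ (for instance $L_1\<30> = 3\<10>\neq 0$); what is true, and what you actually need, is that they \emph{commute} with $\CI$ because the substitution cannot consume the outermost integration.

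For the second claim, however, the proposed lemma \eqref{e:refinedDelta} is too strong and in fact false. Take $M=\exp(-C_1 L_1)$ and $\tau=\<31>$. Working through the recursion (using the paper's formula \eqref{e:deffx} for $f_x$, including the full sum over $k$), one finds
\begin{equ}
\Delta^M \<31> \;=\; M\<31>\otimes\one \;+\; 3C_1 \sum_i \<1>\X_i \otimes \II_{e_i}(\<1>)\;,
\end{equ}
and the first factor $\<1>\X_i$ is \emph{not} of the form $\X^k$; it lies in $T_{1/2-\kappa}$ but outside $\bar T_+$. So the statement ``every correction term carries a pure polynomial factor $\X^k$'' does not survive even this simple example, and the inductive step for products cannot be made to produce it: as you already observe in your closing paragraph, the coproduct $\Delta(\tau\bar\tau)=(\Delta\tau)(\Delta\bar\tau)$ generates left legs of the form $\tau'\X^k$ with $\tau'\neq\1$, and these persist in $\Delta^M$.

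What is true, and what your evaluation argument actually requires, is the weaker statement
\begin{equ}
\Delta^M\tau - M\tau\otimes\one \;\in\; T_{>0}\otimes T^+\;,
\end{equ}
i.e.\ the left legs of all correction terms have \emph{strictly positive homogeneity} (not necessarily being polynomials). This suffices because for any continuous admissible model and any $\sigma\in T_\alpha$ with $\alpha>0$, the analytical bound \eqref{e:bounds} combined with the argument of Remark~\ref{rem:continuousModel} gives $(\Pi_x\sigma)(x)=0$; thus every correction term $\sigma_i\otimes\eta_i$ contributes $f_x(\eta_i)\,(\Pi_x\sigma_i)(x)=0$ at the evaluation point. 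Note that $\bar T_+\subsetneq T_{>0}$, so your argument from the refined claim is a valid deduction \emph{if} the claim held; the fix is to establish the weaker inclusion instead. Proving it still requires the careful bookkeeping you flag for the product case, but the target is now achievable: for products one needs that the left legs $\tau^{(1)}\bar\tau^{(1)}$ arising from $\Delta^M\tau\cdot\Delta^M\bar\tau$ have positive homogeneity unless both primitive factors are, which forces the term $M\tau\cdot M\bar\tau\otimes\one$; the mismatch between $M\tau\cdot M\bar\tau$ and $M(\tau\bar\tau)$ then must be absorbed by further corrections, and this is where the specific structure of the $L_i$ and the recursion from Proposition~\ref{prop:transform} enter. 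So the combinatorial heart you identify is real, but it should be aimed at the weaker, correct inclusion in $T_{>0}\otimes T^+$.
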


\begin{remark}
Note that it it is the same value $x$ that appears twice on each side of \eref{e:actionRenorm}.
It is in fact \textit{not} the case that one has $\Pi_x^M \tau = \Pi_x M \tau$!
However, the identity \eref{e:actionRenorm} is all we need to derive the renormalised equations. 
\end{remark}

It is now rather straightforward to show the following:

\begin{proposition}\label{prop:renormEquation}
Let $M$ be as above and let $(\Pi^M,\Gamma^M)$ be the model obtained by acting 
with $M$ on the canonical model $(\Pi,\Gamma) = \LL(\xi)$ for some smooth function $\xi$.
Let furthermore $\Phi$ be the solution to \eref{e:abstractFull} with respect to the model $(\Pi^M,\Gamma^M)$. Then, the function
$u(t,x) = \bigl(\CR^M \Phi\bigr)(t,x)$ solves the equation
\begin{equ}
\d_t u = \Delta u - u^3 + (3C_1 - 9 C_3 - 6C_5)u - (C_2 + 3C_4) + \xi\;.
\end{equ}
\end{proposition}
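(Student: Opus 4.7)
The strategy is to apply the renormalised reconstruction operator $\CR^M$ to both sides of \eqref{e:abstractFull} and read off the classical PDE satisfied by $u = \CR^M\Phi$. Since $\xi$ is smooth, both the canonical model $(\Pi,\Gamma)=\LL(\xi)$ and the renormalised model $(\Pi^M,\Gamma^M)$ consist of continuous functions, so by Remark~\ref{rem:continuousModel} one has $u(x)=(\Pi^M_x\Phi(x))(x)$ pointwise, with analogous formulas for $\CR^M$ applied to any other element of $\CD^{\gamma,\eta}$ that appears below. None of the generators $L_i$ acts on $\sXi$, so $M\sXi = \sXi$ and $\CR^M\sXi = \xi$. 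Combined with the intertwining identity $\CR^M\CP = P*\CR^M$ from Theorem~\ref{theo:Schauder} (applied to both $K$ and the lift of the smooth remainder $\hat K$), reconstructing \eqref{e:abstractFull} yields $u = P*\1_{t>0}(\xi - \CR^M(\Phi^3)) + P\Phi_0$, so the task reduces to establishing the pointwise identity $\CR^M(\Phi^3)(x) = u(x)^3 - (3C_1 - 9C_3 - 6C_5)u(x) + (C_2 + 3C_4)$.

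By Proposition~\ref{prop:renorm}, $\CR^M(\Phi^3)(x) = (\Pi_x M(\Phi^3(x)))(x)$, where $\Pi_x$ is the canonical model and is multiplicative at the base point: $(\Pi_x \tau \bar\tau)(x) = (\Pi_x\tau)(x)(\Pi_x\bar\tau)(x)$. Were $M$ multiplicative on products in $T$, this would immediately yield $u(x)^3$. The entire renormalisation correction is therefore precisely the ``cross-contraction'' term $(\Pi_x[M(\Phi(x)^3) - (M\Phi(x))^3])(x)$: each $L_i$ acting on a product sums over substitutions of its pattern including those that straddle more than one factor of $\Phi$, whereas $(M\Phi)^3$ only contains substitutions within individual factors.

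To compute this correction, one expands $\Phi(x)$ in the basis of $T$ to the order needed in $\CD^{\gamma,\eta}$. By \eqref{e:abstractFull} this expansion begins with $\<1>$, is followed by $-\<31> - 3u(x)\<21> + \cdots$ inherited from $-\CI(\Phi^3)$, and carries a constant term $u(x)\1$ together with $\X_i$-terms controlling the spatial derivative of $u$. Cubing this expansion in $T$ and applying $M = \exp(-\sum_i C_i L_i)$ --- which truncates to a finite sum since the $L_i$ pairwise commute and each is nilpotent on any finite-dimensional invariant subspace --- produces an expression which, after evaluation via $\Pi_x$ at the base point, decomposes into pieces proportional to $1$, $u(x)$, $u(x)^2$ and $u(x)^3$. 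A careful tracking of how each substitution $L_i$ acts on the $\<2>, \<3>, \<22>, \<32>, \<31>$ patterns appearing at various positions inside $\Phi^3(x)$ --- including sub-pattern substitutions such as $L_1\<3> = 3\<1>$ and $L_3\<32> = 3\<1>$ --- yields the constants $-(C_2+3C_4)$ and the linear term $+(3C_1 - 9C_3 - 6C_5)u(x)$ claimed above.

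The principal obstacle is this combinatorial bookkeeping. Individual substitutions produce contributions that a priori depend nontrivially on $x$ through factors such as $(\Pi_x\<1>)(x) = (K*\xi)(x)$. These must be shown to recombine, together with the action of the $L_i$ on the subleading subtrees of $\Phi(x)$, into $u(x) = (\Pi_x M\Phi(x))(x)$ rather than into the naive evaluation $(\Pi_x\Phi(x))(x)$, so that the final expression is a polynomial in $u$ alone with precisely the stated coefficients. Once this identity is verified, substituting back into the integral equation for $u$ and applying $\d_t - \Delta$ yields the announced PDE on $\{t>0\}$.
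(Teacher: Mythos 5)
Your high-level strategy mirrors the paper's: apply the reconstruction operator to both sides of \eqref{e:abstractFull}, use $\CR^M\sXi=\xi$ and $\CR\CK = K*\CR$ (so $\CR^M\CP = P*\CR^M$), invoke Proposition~\ref{prop:renorm} for the local formula $\CR^M(\Phi^3)(x)=(\Pi_x M\,\Phi(x)^3)(x)$, and reduce the claim to a finite identity in $T$. The difficulty is that the proposal stops exactly where the proof begins.

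The substance of the argument is the ``careful tracking'' that you describe as ``the principal obstacle'' and then decline to carry out. The paper makes it entirely concrete: \eqref{e:abstractFull} can be solved in $\CD^\gamma$ for $\gamma$ just above $1$, so one truncates at homogeneity $1$ and writes out the explicit expansions
\begin{equ}
\Phi = \<1> + \phi\,\1 - \<30> - 3\phi\,\<20> + \scal{\nabla\phi,\X}\;,\qquad
\sXi - \Phi^3 = \sXi - \<3> - 3\phi\,\<2> + 3\<32> - 3\phi^2\<1> + 6\phi\<31> + 9\phi\<22> - 3\scal{\nabla\phi,\<2>\X} - \phi^3\1\;,
\end{equ}
then applies $M$ term by term and verifies that, modulo positive homogeneity,
\begin{equ}
M(\sXi - \Phi^3) = \sXi - (M\Phi)^3 + (3C_1 - 9C_3 - 6C_5)\,M\Phi + \mathrm{const}\cdot\1\;.
\end{equ}
Evaluating via $\Pi_x$ at the base point (where $\Pi_x$ is multiplicative) gives the PDE. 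Without performing this $M$-computation one has not proven the proposition, only restated it.

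Two concrete slips would derail the computation if you attempted it. First, the terms you list as ``inherited from $-\CI(\Phi^3)$'' are wrong: they should be $-\<30>$ and $-3\phi\,\<20>$ (that is, $-\CI(\<3>)$ and $-3\phi\,\CI(\<2>)$), not $-\<31>$ and $-3u\<21>$; the latter carry an extra factor of $\CI(\sXi)$, have homogeneity $0$ so could not appear at that order in $\Phi$, and occur only inside $\Phi^3$, with coefficient $6\phi$ rather than $3u$. Second, the distinction you draw between $(\Pi_x M\Phi(x))(x)$ and ``the naive evaluation $(\Pi_x\Phi(x))(x)$'' is empty: none of the five generators $L_i$ finds its pattern inside the truncated expansion of $\Phi$ (e.g.\ $L_1\<20>=\CI(\1)=0$, $L_2\<30>=\CI(\1)=0$), so $M\Phi(x)=\Phi(x)$ and both expressions equal $u(x)$. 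The real point that needs an argument is that the $\1$- and $\<1>$-valued corrections produced by $M$ acting on $\Phi^3$ assemble into multiples of $\<1>+\phi\,\1$, which agrees with $M\Phi$ modulo positive homogeneity and hence evaluates to $u(x)$; this is exactly what the explicit computation in the paper verifies.
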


\begin{proof}
By Theorem~\ref{theo:mult}, it turns out that \eref{e:abstractFull} can be solved locally in $\D^\gamma$ 
(or rather a weighted version of this space taking into account possible blowup near time $0$)
as soon as $\gamma$ is a
little bit greater than $1$. Therefore, we only need to keep track of its solution $\Phi$ up to terms of homogeneity $1$.
By repeatedly applying the identity \eref{e:abstract}, we see that the solution $\Phi$ is necessarily of the form
\begin{equ}[e:decompuPhi]
\Phi = \<1> + \phi\, \1 - \<30> - 3 \phi\, \<20> + \scal{\nabla \phi, \X}\;,
\end{equ}
for some real-valued function $\phi$ and some $\R^3$-valued function $\nabla \phi$. We
emphasise that $\nabla \phi$ is treated
as an independent function here, we certainly do not suggest that the function $\phi$ is 
differentiable! Our notation
is only by analogy with the classical Taylor expansion. Similarly, the right hand side of 
the equation is given up to order $0$ by
\begin{equ}
\sXi - \Phi^3 = \sXi - \<3> - 3\phi\, \<2> + 3 \<32> - 3 \phi^2\,\<1> + 6 \phi\, \<31> 
 + 9\phi\, \<22> - 3 \scal{\nabla\phi, \<2> \X}
- \phi^3\,\1 \;. \label{e:RHS}
\end{equ}
Combining this with the definition of $M$, it is straightforward to see that, modulo terms of strictly
positive homogeneity, one has
\begin{equs}
M (\sXi - \Phi^3) &= \sXi - (M\Phi)^3 + 3C_1 \<1> + C_2\1 + 3C_1 \phi \1 - 9 C_3 \<1> - 3 C_4\1\\
&\qquad  - 6C_5\<1> 
- 6C_5\phi\1 - 9C_3 \phi \1\\
& = \sXi - (M\Phi)^3 + (3C_1 - 9C_3 - 6C_5) M\Phi + (C_2 - 3C_4)\1\;.
\end{equs}
Combining this with \eref{e:actionRenorm} and applying as before the reconstruction operator
$\CR^M$ to both sides of \eqref{e:abstractFull}, the claim now follows at once. 
\end{proof}

\subsection{Convergence of the renormalised models for Gaussian approximations}
\label{sec:convGauss}

We now argue that if $\xi_\eps = \rho_\eps * \xi$ as in Theorem~\ref{theo:construction},
then one expects to be able to find constants $C_1^{(\eps)}$ and $C_3^{(\eps)}$ (and set 
$C_i^{(\eps)} =0$ for $i \in \{2,4,5\}$) such that the 
sequence of renormalised models $M^\eps \LL(\xi_\eps)$ defined as in the previous subsection
converges to a limiting model. 
Instead of considering the actual sequence of models, we only consider the sequence of
stationary processes $\hat \PPi^\eps \tau := \PPi^\eps M^\eps \tau$, where $\PPi^\eps$ is associated
to $(\Pi^\eps, \Gamma^\eps) = \LL(\xi_\eps)$ as before. 
Since there are general arguments available to deal
with all the expressions $\tau$ of positive homogeneity, we restrict ourselves to those of negative
homogeneity which, leaving out $\sXi$ which is easy to treat, are given by
\begin{equ}
\<3>,\; \<2>,\; \<32>,\;\<1>,\; \<31>,\; \<22>,\; \X_i \<2>\;.
\end{equ}

\begin{remark}
Even if we can show that $\hat \PPi^\eps \tau$ converges weakly to some limit as $\eps \to 0$,
this does not necessarily imply that the corresponding sequence of models converges in their
natural topology. We will not try to address convergence in the correct topology here, although this
is a non-trivial problem. 
\end{remark}

For this section, some elementary notions from the theory of Wiener chaos expansions are required, but we will
try to hide this as much as possible.
Recall that $\PPi^\eps \<1> = K * \xi_\eps = K_\eps * \xi$,
where the kernel $K_\eps$ is given by $K_\eps = K * \rho_\eps$. This shows that, for $\eps > 0$, one has
\begin{equ}
\bigl(\PPi^\eps \<2>\bigr)(z) = \bigl(K * \xi_\eps\bigr)(z)^2 = \int\!\!\int K_\eps(z-z_1)K_\eps(z-z_2)\, \xi(z_1)\xi(z_2)\,dz_1\,dz_2\;.
\end{equ}
Here, we make use of the fact that, for any two distributions $\zeta$ and $\eta$, the product
$\zeta \cdot \eta$ is well-defined as a distribution on the product space. 
It is only when one tries to define a pointwise product yielding again a distribution
on the \textit{same} space that one runs into trouble.
Similar but more complicated expressions can be found for any formal expression $\tau$. 
This naturally leads to the study of random variables of the type
\begin{equ}[e:defIk]
I_k(f) = \int \!\cdots\! \int f(z_1,\ldots,z_k)\, \xi(z_1)\cdots \xi(z_k)\, dz_1\cdots dz_k\;.
\end{equ}
Ideally, one would hope to have an It\^o isometry of the type $\E I_k(f)I_k(g) = \scal{f^\sym,g^\sym}$,
where $\scal{\cdot,\cdot}$ denotes the $L^2$-scalar product and $f^\sym$ denotes the symmetrisation of $f$.
This would then allow to extend \eqref{e:defIk} from smooth test functions to all functions $f \in L^2$.
It is unfortunately \textit{not} the case that one has such an isometry. 
Instead, one should first replace the products in \eref{e:defIk}
by \textit{Wick products}, which are obtained by considering all possible \textit{contractions} of the type
\begin{equ}
\xi(z_i)\xi(z_j) \mapsto \Wick{\xi(z_i)\, \xi(z_j)} + \delta(z_i - z_j)\;.
\end{equ} 
For example, the distribution $\Wick{\xi(z_i)\, \xi(z_j)\, \xi(z_k)}$ is defined by the identity
\begin{equ}
\xi(z_i)\xi(z_j)\xi(z_k) = \Wick{\xi(z_i)\xi(z_j) \xi(z_k)} + \xi(z_i)\delta(z_j-z_k)
 + \xi(z_j)\delta(z_k-z_i) + \xi(z_k)\delta(z_i-z_j)\;.
\end{equ}
(See Section~\ref{sec:CLT} below for a more general definition which covers Gaussian
random variables as a special case.) If we then set
\begin{equ}
\hat I_k(f) = \int \!\cdots\! \int f(z_1,\ldots,z_k)\, \Wick{\xi(z_1)\cdots\xi(z_k)}\, dz_1\cdots dz_k\;,
\end{equ}
which is well-defined for all smooth test functions $f$, one recovers indeed
the It\^o isometry
\begin{equ}[e:defIto]
\E \hat I_k(f) \hat I_k(g) = \scal{f^\sym,g^\sym}\;.
\end{equ}
We refer to \cite{Nualart} for a more thorough description of this construction, which also
goes under the name of \textit{Wiener chaos}, with random variables of the type $\hat I_k(f)$ said
to belong to the chaos of $k$th order.
While chaoses of different order are not independent, they are orthogonal in the
sense that $\E \hat I_k(f) \hat I_\ell(g) = 0$ if $k \neq \ell$. 

One very nice property is that one has equivalence of moments in the sense that, for every $k>0$ and $p>0$ there exists a constant $C_{k,p}$
such that 
\begin{equ}[e:hypercontract]
\E |\hat I_k(f)|^p \le C_{k,p} \|f^\sym\|_{L^2}^p\le C_{k,p} \|f\|_{L^2}^p\;,
\end{equ}
where the second bound comes from the fact that symmetrisation is a contraction in $L^2$.
In other words, the $p$th moment of a random variable belonging to a Wiener chaos of fixed order can
be bounded by the corresponding power of its second moment.
The reason why such a bound is very useful is the following Kolmogorov-type result:

\begin{theorem}\label{theo:Kolmogorov}
Let $\xi$ be a distribution-valued random variable such that there exists $\alpha \le 0$ such that,
for every $p > 0$, the bound
\begin{equ}[e:boundKolmo]
\E |\xi(\phi_x^\lambda)|^p \lesssim \lambda^{\alpha p}\;,
\end{equ}
holds uniformly over $\phi \in \CB_r$ for some $r > |\alpha|$, all
$\lambda \in (0,1]$, and locally uniformly in $x$.
Then, for every $\beta < \alpha$, there exists a $\CC^\beta$-valued version of $\xi$.
\end{theorem}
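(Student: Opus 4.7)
The plan is to use a wavelet (or multiresolution) characterisation of $\CC^\beta = B^\beta_{\infty,\infty,\mathrm{loc}}$ together with a Chebyshev plus Borel--Cantelli argument to upgrade the hypothesised moment bounds on single test-function evaluations into an almost sure uniform bound on wavelet coefficients. This is the direct analogue for distributions of the classical Kolmogorov continuity criterion, where a pointwise $L^p$ modulus-of-continuity estimate is leveraged into almost sure H\"older regularity.

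First, I would fix a compactly supported wavelet basis $\{\psi^n_k\}_{n\ge 0,\,k}$ adapted to the scaling $\s$, with the mother wavelet of regularity strictly greater than $r$; each wavelet can be written as $\psi^n_k = c_n\, \phi_{x^n_k}^{2^{-n}}$ in the notation of \eqref{e:bounds} for a fixed $\phi \in \CB_r$, where $x^n_k$ ranges over a dyadic sublattice (scaled according to $\s$) and $c_n \asymp 2^{-n|\s|/2}$ (this is the $L^2$ normalisation). The classical fact I would invoke is that a distribution $\zeta$ belongs locally to $\CC^\beta$ if and only if, on every compact $\K$,
$$|\langle \zeta,\psi^n_k\rangle| \lesssim 2^{-n(\beta + |\s|/2)}$$
uniformly in $n\ge 0$ and those $k$ with $x^n_k \in \K$, together with an analogous bound on the coarse-scale ($n=0$) coefficients, which is handled by the hypothesis at $\lambda=1$. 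Feeding the assumption of the theorem into this gives, for every $p\ge 1$,
$$\E\,|\xi(\psi^n_k)|^p \lesssim 2^{-n(\alpha + |\s|/2)p},$$
locally uniformly in $x^n_k$.

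Second, Chebyshev's inequality yields, for any $\delta > 0$,
$$\P\bigl(|\xi(\psi^n_k)| > 2^{-n(\alpha + |\s|/2 - \delta)}\bigr) \lesssim 2^{-np\delta}.$$
Since the number of $k$ with $x^n_k$ in a fixed compact $\K$ is of order $2^{n|\s|}$, a union bound gives
$$\P\Bigl(\sup_{x^n_k \in \K} |\xi(\psi^n_k)| > 2^{-n(\alpha + |\s|/2 - \delta)}\Bigr) \lesssim 2^{n(|\s| - p\delta)},$$
which is summable in $n$ as soon as $p\delta > |\s|$. Borel--Cantelli then produces an almost surely finite random constant $C$ with $|\xi(\psi^n_k)| \le C\, 2^{-n(\alpha + |\s|/2 - \delta)}$ for every $n$ and every $x^n_k \in \K$; exhausting $\R^d$ by countably many compacts, this holds locally uniformly in $x$. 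I would then define $\tilde\xi$ through its wavelet expansion with these coefficients; the wavelet characterisation shows $\tilde\xi \in \CC^{\alpha-\delta}$ almost surely, and taking $\delta$ small enough that $\alpha-\delta > \beta$ yields the desired version in $\CC^\beta$. That $\tilde\xi$ agrees with $\xi$ as a distribution follows because, for any fixed $\phi$, both $\tilde\xi(\phi)$ and $\xi(\phi)$ are the $L^2(\P)$ limit of the same truncated wavelet sum, and this suffices on a countable dense set of $\phi$.

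The principal obstacle is really in the first step: one must verify the wavelet characterisation of $\CC^\beta$ for negative exponents under the parabolic scaling $\s$, and that reconstruction from summable wavelet coefficients genuinely produces an element of $\CC^\beta$. The strict inequality $\beta < \alpha$ arises, exactly as in the classical Kolmogorov criterion, because the factor $2^{n|\s|}$ from the union bound must be compensated by an arbitrarily small loss $\delta$ in the H\"older exponent; since $\delta$ is free, every $\beta < \alpha$ is reached, but not $\beta = \alpha$.
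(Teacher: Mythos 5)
The paper does not actually give a proof of this theorem (it refers the reader to the analogous result \cite[Thm~10.7]{Regular} for models), but your wavelet argument is exactly the mechanism used there and it is correct: the $L^2$-normalised wavelet coefficients satisfy $\E|\xi(\psi^n_k)|^p \lesssim 2^{-np(\alpha+|\s|/2)}$, Chebyshev plus the union bound over the $\sim 2^{n|\s|}$ wavelets per scale and Borel--Cantelli force an a.s.\ loss of an arbitrarily small $\delta$ in exchange for summability, and the Besov characterisation of $\CC^\beta$ via wavelet coefficients (valid for negative exponents under parabolic scaling, cf.\ \cite[Prop.~3.20]{Regular}) converts this into $\CC^{\alpha-\delta}$ regularity of the reconstructed version. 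The identification of $\tilde\xi$ with $\xi$ is also handled as you describe — for each fixed test function the truncated wavelet sums converge to both, hence they agree a.s., which is all the definition of a version requires (you do not even need the countable-dense-set reduction you mention).
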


\begin{remark}
Here, we say that $\tilde \xi$ is a version of $\xi$ if, for every smooth test function $\phi$,
 $\tilde\xi(\phi) = \xi(\phi)$ almost surely.
\end{remark}

\begin{remark}
There is an analogous result allowing to show the convergence of a sequence of models
to a limiting model in the topology given by Definition~\ref{def:model}, see \cite[Thm~10.7]{Regular} for
more details.
\end{remark}

\begin{remark}
As usual for Kolmogorov-type results, one can trade integrability for regularity in the sense that if
\eqref{e:boundKolmo} only holds for some fixed value $p > 2$, then we can only conclude that
$\xi$ has a version in some $\CC^\beta$ for $\beta < \alpha - {d \over p}$, where $d$ denotes the
scaling dimension of the underlying space. In our particular case one has $d=5$ since
we consider space-time distributions and ``time counts double''.
\end{remark}

Returning to our problem, we first argue that it should be possible to choose $M$ in such a way that 
$\hat \PPi^\eps \<2>$ converges to a limit as $\eps \to 0$.
The above considerations suggest that one should rewrite $\PPi^\eps \<2>$ as
\begin{equ}[e:Psi2]
\bigl(\PPi^\eps \<2>\bigr)(z) = \bigl(K * \xi_\eps\bigr)(z)^2 = \int\!\!\int K_\eps(z-z_1)K_\eps(z-z_2)\, 
\Wick{\xi(z_1)\,\xi(z_2)}\,dz_1\,dz_2 + C_\eps\;,
\end{equ}
where the constant $C_\eps$ is given by
\begin{equ}[e:defCeps]
C_\eps = \int K_\eps^2(z_1)\,dz_1 = \int K_\eps^2(z-z_1)\,dz_1\;.
\end{equ}
At this stage, it is convenient to introduce a graphical notation ``\`a la Feynman'' for these 
multiple integrals since expressions otherwise rapidly became unwieldy.
Similarly to \cite{Regular,Etienne,HaoShen}, we denote dummy integration variables by black dots,
we write \tikz \node[root] {}; for the distinguished variable $z$, 
\tikz \node[var] {}; for a Wick factor of the type
$\xi(z_i)$, \tikz[baseline=-1mm] \draw[keps] (0,0) -- (1,0); 
for the kernel $K_\eps$ evaluated at the difference between the variables
representing its two endpoints, and we implicitly assume that all variables except for 
\tikz \node[root] {}; are integrated out. With this notation, \eqref{e:Psi2} and \eqref{e:defCeps} can be expressed in 
a much friendlier  and more compact way as
\begin{equ}[e:defCepsGraph]
\PPi^\eps \<2> = 
\begin{tikzpicture}[scale=0.35,baseline=0cm]
	\node at (0,-1)  [root] (root) {};
	\node at (-1,1.5)  [var] (left) {};
	\node at (1,1.5)  [var] (right) {};
	
	\draw[keps] (left) to  (root);	
	\draw[keps] (right) to (root);
\end{tikzpicture}\;
 + \;
\begin{tikzpicture}[scale=0.35,baseline=0cm]
	\node at (0,-1)  [root] (root) {};
	\node at (0,1.6)  [int] (top) {};
	
	\draw[keps,bend right = 60] (top) to  (root);
	
	\draw[keps,bend left = 60] (top) to (root);
\end{tikzpicture}\;,\qquad
C_\eps = \begin{tikzpicture}[scale=0.35,baseline=0cm]
	\node at (0,-1)  [root] (root) {};
	\node at (0,1.6)  [int] (top) {};
	
	\draw[keps,bend right = 60] (top) to  (root);
	
	\draw[keps,bend left = 60] (top) to (root);
\end{tikzpicture}\;.
\end{equ}
This notation is also compatible with the graphical notation we are using for the 
various symbols: the graphical notation for $\PPi^\eps \tau$ is essentially the same
as that for $\tau$ but, as a consequence of the definition of Wick products, 
one should sum over all possible graphs obtained by pairwise contractions 
of the noises \tikz \node[var] {};.

Note now that $K_\eps$ is an $\eps$-approximation of the kernel $K$ which has the same singular behaviour
as the heat kernel. In terms of the parabolic distance, the singularity of the heat kernel scales like
$K(z) \sim |z|^{-3}$ for $z \to 0$. (Recall that we consider the parabolic distance $|(t,x)| = \sqrt{|t|} + |x|$,
so that this is consistent with the fact that the heat kernel is bounded by $t^{-3/2}$.)
This suggests that one has $K_\eps^2(z) \sim |z|^{-6}$ for $|z| \gg \eps$. Since parabolic space-time has scaling dimension
$5$ (time counts double!), this is a non-integrable singularity. As a matter of fact, there is a whole power of $z$
missing to make it borderline integrable, which correctly suggests that one has
\begin{equ}
C_\eps \sim {1\over \eps}\;.
\end{equ}
This already shows that one should not expect $\PPi^\eps \<2>$ to converge to a limit as $\eps \to 0$. 
Indeed, as we will see presently,
it turns out that the first term in \eref{e:Psi2} converges to a distribution-valued stationary 
space-time process,
so that one would like to somehow get rid of this diverging constant $C_\eps$. This is exactly where the renormalisation
map $M^\eps$ (in particular the factor $\exp(-C_1^{(\eps)} L_1)$) enters into play. Following the above
definitions, we see that one has
\begin{equ}
\bigl(\hat \PPi^\eps \<2>\bigr)(z) = \bigl(\PPi^\eps M\<2>\bigr)(z) = \bigl(\PPi^\eps \<2>\bigr)(z)  - C_1^{(
\eps)}\;.
\end{equ}
This suggests that if we make the choice $C_1^{(\eps)} = C_\eps$, then 
$\hat \PPi^\eps \<2>$ does indeed converge to a 
non-trivial limit as $\eps \to 0$. Writing \tikz[baseline=-1mm] \draw[kernel] (0,0) -- (1,0);
for the kernel $K$,
this limit is a distribution which can formally be described as
\begin{equ}
\PPi^\eps \<2> = 
\begin{tikzpicture}[scale=0.35,baseline=0cm]
	\node at (0,-1)  [root] (root) {};
	\node at (-1,1.5)  [var] (left) {};
	\node at (1,1.5)  [var] (right) {};
	
	\draw[kernel] (left) to  (root);	
	\draw[kernel] (right) to (root);
\end{tikzpicture}\;,
\end{equ}
with the implicit understanding that $\bigl(\PPi^\eps \<2> \bigr)(z)$ does not make sense
as a random variable, but must first be integrated against a smooth test function in order to produce
a random variable belonging to the second Wiener chaos.
Using the scaling properties of the kernel $K$,
it is not too difficult to show that this procedure works, and that the resulting 
random variables satisfy a
bound of the type considered in Theorem~\ref{theo:Kolmogorov} with $\alpha = -1$. 
Once we know that $\hat \PPi^\eps \<2>$ converges,
it is immediate that $\hat \PPi^\eps X\<2>$ converges as well, since this amounts to just multiplying a distribution
by a smooth function. 

A similar argument to what we did for $\<2>$ allows to take care of $\tau = \<3>$ since one then has
\begin{equs}
\bigl(\PPi^\eps \<3>\bigr)(z) &= 
\begin{tikzpicture}[scale=0.35,baseline=0cm]
	\node at (0,-1)  [root] (root) {};
	\node at (-1.5,1.5)  [var] (left) {};
	\node at (0,1.8)  [var] (middle) {};
	\node at (1.5,1.5)  [var] (right) {};
	
	\draw[keps] (left) to  (root);	
	\draw[keps] (right) to (root);
	\draw[keps] (middle) to (root);
\end{tikzpicture}\;
 + 3\;
\begin{tikzpicture}[scale=0.35,baseline=0cm]
	\node at (0,-1)  [root] (root) {};
	\node at (-1,1.6)  [int] (top) {};
	\node at (1,1.6)  [var] (topr) {};
	
	\draw[keps,bend right = 60] (top) to  (root);	
	\draw[keps,bend left = 60] (top) to (root);
	\draw[keps,bend left = 60] (topr) to (root);
\end{tikzpicture}\;.
\end{equs}
Noting that the second term in this expression is nothing but
$3C_\eps \bigl(\PPi^\eps \<1>\bigr)(z)$,
we see that in this case, provided again that $C_1^{(\eps)} = C_\eps$, 
$\hat \PPi^\eps \<3>$ is given by only the first term in the expression
above, which turns out to converge to a non-degenerate limiting random 
distribution in a similar way to what happened
for $\<2>$, but this time the Kolmogorov-type bound only holds for $\alpha = -{3\over 2}$. 

Going down our list of terms of negative homogeneity, we see that it remains to consider $\<31>$, $\<22>$,
and $\<32>$. It turns out that the last one is the most difficult, so we only discuss that one.
 The explicit expression for $\PPi^\eps \<32>$ is given in our graphical notation by
\begin{equs}
\PPi^\eps \<32> &= 
\begin{tikzpicture}[scale=0.35,baseline=0cm]
	\node at (0,-1)  [root] (root) {};
	\node at (0,1.5)  [int] (middle) {};
	\node at (-1.6,1)  [var] (left) {};
	\node at (1.6,1)  [var] (right) {};
	\node at (-1.5,3.5)  [var] (tl) {};
	\node at (1.5,3.5)  [var] (tr) {};
	\node at (0,4)  [var] (tm) {};
	
	\draw[keps] (left) to  (root);	
	\draw[keps] (right) to (root);
	\draw[keps] (tl) to  (middle);	
	\draw[keps] (tr) to (middle);
	\draw[keps] (tm) to  (middle);	
	\draw[kernel] (middle) to (root);
\end{tikzpicture}\; + 
6\;
\begin{tikzpicture}[scale=0.35,baseline=0cm]
	\node at (0,-1)  [root] (root) {};
	\node at (0,1.5)  [int] (middle) {};
	\node at (-1.6,.25)  [int] (left) {};
	\node at (1.6,1)  [var] (right) {};
	\node at (1.5,3.5)  [var] (tr) {};
	\node at (0,4)  [var] (tm) {};
	
	\draw[keps,bend right=40] (left) to  (root);	
	\draw[keps] (right) to (root);
	\draw[keps,bend left=40] (left) to  (middle);	
	\draw[keps] (tr) to (middle);
	\draw[keps] (tm) to  (middle);	
	\draw[kernel] (middle) to (root);
\end{tikzpicture}\; + 
3\;
\begin{tikzpicture}[scale=0.35,baseline=0cm]
	\node at (0,-1)  [root] (root) {};
	\node at (0,1.5)  [int] (middle) {};
	\node at (-1.6,1)  [var] (left) {};
	\node at (1.6,1)  [var] (right) {};
	\node at (-1,3.6)  [int] (tl) {};
	\node at (1,3.6)  [var] (tr) {};
	
	\draw[keps] (left) to  (root);	
	\draw[keps] (right) to (root);
	\draw[keps,bend right = 60] (tl) to  (middle);	
	\draw[keps,bend left = 60] (tl) to (middle);
	\draw[keps,bend left = 60] (tr) to (middle);
	\draw[kernel] (middle) to (root);
\end{tikzpicture}\; + \;
\begin{tikzpicture}[scale=0.35,baseline=0cm]
	\node at (0,-1)  [root] (root) {};
	\node at (1,1.6)  [int] (middle) {};
	\node at (-1,1.6)  [int] (left) {};
	\node at (-0.5,3.5)  [var] (tl) {};
	\node at (2.5,3.5)  [var] (tr) {};
	\node at (1,4)  [var] (tm) {};

	\draw[keps,bend right = 60] (left) to  (root);	
	\draw[keps,bend left = 60] (left) to (root);
	\draw[kernel,bend left = 60] (middle) to (root);
	
	\draw[keps] (tl) to  (middle);	
	\draw[keps] (tr) to (middle);
	\draw[keps] (tm) to  (middle);	
\end{tikzpicture}\\ 
&\qquad + 
3\;
\begin{tikzpicture}[scale=0.35,baseline=0cm]
	\node at (0,-1)  [root] (root) {};
	\node at (1,1.6)  [int] (middle) {};
	\node at (-1,1.6)  [int] (left) {};
	\node at (0,3.6)  [int] (tl) {};
	\node at (2,3.6)  [var] (tr) {};

	\draw[keps,bend right = 60] (left) to  (root);	
	\draw[keps,bend left = 60] (left) to (root);
	\draw[kernel,bend left = 60] (middle) to (root);
	
	\draw[keps,bend right = 60] (tl) to  (middle);	
	\draw[keps,bend left = 60] (tl) to (middle);
	\draw[keps,bend left = 60] (tr) to (middle);
\end{tikzpicture}\; + 6\;
\begin{tikzpicture}[scale=0.35,baseline=0cm]
	\node at (0,-1)  [root] (root) {};
	\node at (0,1.5)  [int] (middle) {};
	\node at (-1.6,.25)  [int] (left) {};
	\node at (1.6,1)  [var] (right) {};
	\node at (0,3.5)  [int] (top) {};
	
	\draw[keps,bend right=40] (left) to  (root);	
	\draw[keps] (right) to (root);
	\draw[keps,bend left=40] (left) to  (middle);	
	\draw[keps,bend right = 60] (top) to (middle);
	\draw[keps,bend left = 60] (top) to  (middle);	
	\draw[kernel] (middle) to (root);
\end{tikzpicture}\; + 6\;
\begin{tikzpicture}[scale=0.35,baseline=0cm]
	\node at (0,-1)  [root] (root) {};
	\node at (0,1.5)  [int] (middle) {};
	\node at (-1.6,.25)  [int] (left) {};
	\node at (1.6,.25)  [int] (right) {};
	\node at (0,4)  [var] (tm) {};
	
	\draw[keps,bend right=40] (left) to  (root);	
	\draw[keps,bend left=40] (left) to  (middle);	
	\draw[keps,bend left=40] (right) to  (root);	
	\draw[keps,bend right=40] (right) to  (middle);	
	\draw[keps] (tm) to  (middle);	
	\draw[kernel] (middle) to (root);
\end{tikzpicture}\;.
\end{equs}
It turns out that \textit{all} of these terms, except for the first two,
are divergent as $\eps \to 0$, so we have to hope that our definition of
$\hat \PPi^\eps$ creates sufficiently many cancellations to take care of them!
A simple calculation using our definition of $M^\eps$ shows that one has the 
identity
\begin{equ}
\hat \PPi^\eps \<32> = \PPi^\eps\<32> - 3C_1^{(\eps)} \PPi^\eps \<12> - C_1^{(\eps)} \PPi^\eps \<30>
+ 3 (C_1^{(\eps)})^2 \PPi^\eps \<10> - 3 C_3^{(\eps)} \PPi^\eps\<1>\;.
\end{equ}
(Recall that we have set $C_i^{(\eps)} = 0$ for $i \not \in \{1,3\}$.)
Inserting the definition of $\PPi^\eps$ into this expression and making use of
the identity $C_1^{(\eps)} = C_\eps$ with $C_\eps$ given by \eqref{e:defCepsGraph},
we see that many terms do indeed cancel out, finally yielding
\begin{equ}[e:Pihat32]
\hat \PPi^\eps \<32> = 
\begin{tikzpicture}[scale=0.35,baseline=0cm]
	\node at (0,-1)  [root] (root) {};
	\node at (0,1.5)  [int] (middle) {};
	\node at (-1.6,1)  [var] (left) {};
	\node at (1.6,1)  [var] (right) {};
	\node at (-1.5,3.5)  [var] (tl) {};
	\node at (1.5,3.5)  [var] (tr) {};
	\node at (0,4)  [var] (tm) {};
	
	\draw[keps] (left) to  (root);	
	\draw[keps] (right) to (root);
	\draw[keps] (tl) to  (middle);	
	\draw[keps] (tr) to (middle);
	\draw[keps] (tm) to  (middle);	
	\draw[kernel] (middle) to (root);
\end{tikzpicture}\; + 
6\;
\begin{tikzpicture}[scale=0.35,baseline=0cm]
	\node at (0,-1)  [root] (root) {};
	\node at (0,1.5)  [int] (middle) {};
	\node at (-1.6,.25)  [int] (left) {};
	\node at (1.6,1)  [var] (right) {};
	\node at (1.5,3.5)  [var] (tr) {};
	\node at (0,4)  [var] (tm) {};
	
	\draw[keps,bend right=40] (left) to  (root);	
	\draw[keps] (right) to (root);
	\draw[keps,bend left=40] (left) to  (middle);	
	\draw[keps] (tr) to (middle);
	\draw[keps] (tm) to  (middle);	
	\draw[kernel] (middle) to (root);
\end{tikzpicture}\; + 6\;
\begin{tikzpicture}[scale=0.35,baseline=0cm]
	\node at (0,-1)  [root] (root) {};
	\node at (0,1.5)  [int] (middle) {};
	\node at (-1.6,.25)  [int] (left) {};
	\node at (1.6,.25)  [int] (right) {};
	\node at (0,4)  [var] (tm) {};
	
	\draw[keps,bend right=40] (left) to  (root);	
	\draw[keps,bend left=40] (left) to  (middle);	
	\draw[keps,bend left=40] (right) to  (root);	
	\draw[keps,bend right=40] (right) to  (middle);	
	\draw[keps] (tm) to  (middle);	
	\draw[kernel] (middle) to (root);
\end{tikzpicture}
\; - 3C_3^{(\eps)}\;
\begin{tikzpicture}[scale=0.35,baseline=0cm]
	\node at (0,-1)  [root] (root) {};
	\node at (0,1.5)  [var] (middle) {};
	
	\draw[keps] (middle) to (root);
\end{tikzpicture}\;.
\end{equ}
This is still problematic: the penultimate term in this expression contains the kernel
\begin{equ}[e:defQeps]
Q_\eps = 
\begin{tikzpicture}[scale=0.35,baseline=-0.1cm]
	\node at (4,0)  [root] (root) {};
	\node at (0,0)  [root] (middle) {};
	\node at (2,-1.5)  [int] (left) {};
	\node at (2,1.5)  [int] (right) {};
	
	\draw[keps,bend right=30] (left) to  (root);	
	\draw[keps,bend left=30] (left) to  (middle);	
	\draw[keps,bend left=30] (right) to  (root);	
	\draw[keps,bend right=30] (right) to  (middle);	
	\draw[kernel] (middle) to (root);
\end{tikzpicture}\;.
\end{equ}
(This is a slight abuse of our notation: the two variables \tikz \node[root] {};
are fixed and $Q_\eps$ is evaluated at their difference.)
As $\eps \to 0$, $Q_\eps$ converges to a kernel
$Q$, which has a non-integrable singularity at the origin, thus preventing the corresponding
term to converge to a limit.

This is akin to the problem of making sense of integration against a one-dimensional kernel 
with a singularity of type $1/|x|$ at the origin. For the sake of the argument, let us consider a
function $W\colon \R \to \R$ which is compactly supported and smooth everywhere except at the origin,
where it diverges like $W(x) \sim 1/|x|$. It is then natural to associate to $W$ a ``renormalised'' distribution
$\Ren W$ given by
\begin{equ}
\bigl(\Ren W\bigr)(\phi) = \int W(x) \bigl(\phi(x) - \phi(0)\bigr)\,dx\;.
\end{equ}
Note that $\Ren W$ has the property that if $\phi(0) = 0$, then it simply corresponds to integration against $W$,
which is the standard way of associating a distribution to a function. In a way, the extra term can be interpreted
as subtracting a Dirac distribution with an ``infinite mass'' located at the origin, thus cancelling out the
divergence of the non-integrable singularity. It is also straightforward to verify that if $W_\eps$ is a sequence
of smooth approximations to $W$ (say one has $W_\eps(x) = W(x)$ for $|x| > \eps$ and $W_\eps \sim 1/\eps$ otherwise),
then $\Ren W^\eps \to \Ren W$ in a distributional sense, and (using the usual correspondence between functions
and distributions) one has
\begin{equ}
\Ren W^\eps = W^\eps - \hat C_\eps \delta_0\;,\qquad \hat C_\eps = \int W^\eps(x)\,dx\;.
\end{equ}
The cure to the problem we are facing for showing the convergence of $\PPi^\eps \<32>$ is
virtually identical. Indeed, by choosing 
\begin{equ}
C_3^{(\eps)} = 2\;\begin{tikzpicture}[scale=0.35,baseline=-0.1cm]
	\node at (4,0)  [int] (root) {};
	\node at (0,0)  [root] (middle) {};
	\node at (2,-1.5)  [int] (left) {};
	\node at (2,1.5)  [int] (right) {};
	
	\draw[keps,bend right=30] (left) to  (root);	
	\draw[keps,bend left=30] (left) to  (middle);	
	\draw[keps,bend left=30] (right) to  (root);	
	\draw[keps,bend right=30] (right) to  (middle);	
	\draw[kernel] (middle) to (root);
\end{tikzpicture}\;,
\end{equ}
(which is a constant as it does not depend on \tikz \node[root]{}; by translation invariance)
the term in the first homogeneous Wiener chaos for $\hat \PPi^\eps \<32>$ 
is precisely given by
\begin{equ}
6\int \bigl(\Ren \hat Q_\eps * K_\eps\bigr)(z- z_2)\xi(z_2)\,dz_2\;.
\end{equ}
It turns out that the convergence of $\Ren \hat Q_\eps$ to a limiting distribution $\Ren \hat Q$ takes
place in a sufficiently strong topology to allow to conclude that $\hat \PPi^\eps \<32>$ 
does indeed converge to a non-trivial limiting random distribution.

It should be clear from this whole discussion that while the precise values of the constants $C_1^{(\eps)}$
and $C_3^{(\eps)}$ depend on the shape of the mollifier $\rho$, the limiting (random)
model $(\hat \Pi,\hat \Gamma)$ obtained in this way is independent of it. Combining this with the
continuity of the solution to the fixed point map \eref{e:abstractFull} and of the reconstruction
operator $\CR$ with respect to the underlying model, the conclusion of Theorem~\ref{theo:construction}
follows.

\section{Convergence of other smooth models to \texorpdfstring{$\Phi^4_3$}{Phi43}}

In this final section, we give a short overview of the ideas involved in showing
that different kinds of smooth models (and not just the simplest one obtained by
hitting the noise with a smooth mollifier) also converge to the same process. 

\subsection{Non-Gaussian approximations}
\label{sec:CLT}

We first discuss how the argument sketched in the previous section 
can be modified to deal with the 
case when $\xi_\eps$ does no longer denote some simple mollification of $\xi$, but instead
denotes a suitable rescaling of an arbitrary stationary space-time stochastic process, i.e.\ 
one has
\begin{equ}
\xi_\eps(t,x) = \eps^{-{5\over 2}} \eta(t/\eps^2, x/\eps)\;,
\end{equ}
for a smooth stationary process $\eta$ on $\R\times \R^3$. As already mentioned
in the introduction, it is then possible to find choices of constants $C_\eps^{(i)}$
such that solutions to
\begin{equ}
\d_t \Phi_\eps = \Delta \Phi_\eps + C_\eps^{(1)} + C_\eps^{(2)} \Phi_\eps - \Phi_\eps^3 + \xi_\eps \;,
\end{equ}
converge as $\eps \to 0$ to the process $\Phi$ built in Theorem~\ref{theo:construction}.
In view of Proposition~\ref{prop:renormEquation}, this is again a consequence of the fact that
the renormalised models $\hat \PPi^\eps$ built from $\xi_\eps$ as in Section~\ref{sec:convGauss}
converge to the same limit as before, provided that the renormalisation
constants $C_i^{(\eps)}$ are suitably chosen. The purpose of this section is to show
why all of the constants $C_i^{(\eps)}$ with $i\in \{1,\ldots,5\}$ should in general
be set to non-zero values.

We essentially follow the arguments
of \cite{HaoShen} and we will completely ignore issues related to the fact
that one wishes to consider the limiting equation on a torus rather than the whole space.
We assume that the $\sigma$-fields generated by point evaluations of $\eta$ in any two regions 
separated by a distance of at least $1$ are independent.
The kind of example for $\eta$ one should have in mind is 
\begin{equ}
\eta(z) = \int \phi(z-\bar z) \,\P_\mu(d\bar z, d\phi)\;,
\end{equ}
where $\P_\mu$ denotes a compensated Poisson point process on $\R^4\times \CC_0^\infty$ with intensity 
measure $\mu(d\bar z, d\phi) = d\bar z\,\nu(d\phi)$, where $\nu$ is a finite measure charging
only functions with support contained in the unit ball and satisfying suitable moment conditions
to guarantee that $\eta$ admits moments of all orders.

Since $\xi_\eps$ is no longer Gaussian, we no longer have Wiener chaos decomposition at our disposal. 
However, there exists an analogue to the Wick product for arbitrary collections of random variables,
and this will be sufficient for our purpose. 
In order to introduce the Wick product, we first need to introduce joint cumulants.
Given a collection of random variables $\CX = \{X_\alpha\}_{\alpha \in A}$ for some index set $A$,
and a subset $B \subset A$, we write $X_B \subset \CX$ and $X^B$ as shorthands for
\begin{equ}
X_B = \{X_\alpha\,:\, \alpha \in B\} \;,\qquad X^B = \prod_{\alpha\in B}X_\alpha\;.
\end{equ}
Given a finite set $B$, we furthermore write $\CP(B)$ for the collection of all partitions of $B$,
i.e.\ if $B$ is non-empty, then $\CP(B)$ consists of 
all sets $\pi \subset \powerset(B)$ 
(the set of subsets of $B$) such that $\bigcup \pi = B$, $\emptyset \in \pi$,
and such that any two distinct elements of $\pi$ are disjoint. 
With this definition, we in particular have $\CP(\emptyset) = \{\{\emptyset\}\}$.
The joint cumulant $\Cum \big(X_{B}\big)$ for a collection $X_B$ of random variables
is then given by the following defining property.

\begin{definition} \label{def:cumu}
Given a collection $\CX$ of random variables as above and any non-empty finite set 
$B \subset A$, one has
\begin{equ} [e:mome2cumu]
	\E \big( X^B\big)
	= \sum_{\pi \in \CP(B)} \prod_{\bar B\in\pi} \Cum \big(X_{\bar B}\big)\;.
\end{equ}
\end{definition}

Note that this definition naturally enforces 
the convention that $\Cum \big(X_{\emptyset}\big) = 1$.
We can now define the Wick product of an arbitrary finite collection of random variables:

\begin{definition} \label{def:Wick-product}
The Wick product 
$\Wick{X_B}$ for $B \subset A$ is defined recursively by 
\begin{equ} [e:defWick]
X^A  = 
\sum_{B \subset A}  \Wick{X_B}  \sum_{\pi \in \CP(A \setminus B)} 
	  \prod_{\bar B \in \pi}
		 \Cum \big(X_{\bar B}\big)    \;.
\end{equ}
\end{definition}

Similarly to above, this enforces the natural convention that $\E \Wick{X_{\emptyset}} = 1$.
In other words, to turn a product of random variables into a sum of Wick products, one
now not only considers all possible ways of replacing pairs of random variables by their
covariance, but all possible ways of replacing subsets of random variables by their
joint cumulants. 

Denote now by $\kappa_p^{(\eps)}$ the $p$th joint cumulant of $\xi_\eps$, namely
\begin{equ}
\kappa_p^{(\eps)}(z_1,\ldots,z_p) = \E_c \bigl(\{\xi_\eps(z_1),\ldots,\xi_\eps(z_p)\}\bigr)\;.
\end{equ}
By translation invariance, this function depends only on the differences between
its $p$ arguments. Furthermore, it satisfies the scaling reation
\begin{equ}
\kappa_p^{(\eps)}(z_1,\ldots,z_p)
= \eps^{-{5p\over 2}} \kappa_p(S_\eps z_1,\ldots,S_\eps z_p)\;,
\end{equ}
where $\kappa_p$ denotes the $p$th joint cumulant of $\eta$ and the scaling operator 
$S_\eps$ acts on $\R^4$ as $S_\eps(t,x) = (t/\eps^2,x/\eps)$.
We then use the same graphical notation as in \cite{HaoShen} for these cumulants:
a red $p$-gon for instance 
\begin{tikzpicture}[baseline=-4]
\node		(mid)  	at (0,0) {};
	\node[cumu4]	(mid-cumu) 	at (mid) {};
\node[int] at (mid.north west) {};
\node[int] at (mid.south west) {};
\node[int] at (mid.north east) {};
\node[int] at (mid.south east) {};
\end{tikzpicture}
for $p=4$, 
represents the cumulant function $\kappa_p^{(\eps)}(z_1,\ldots,z_p)$, with $z_i$ given by the
$p$ integration variables represented as before by the $p$ black dots.
This time furthermore, \tikz \node[var]{}; denotes a Wick factor of $\xi_\eps$ instead of $\xi$.

As a consequence of Definitions~\ref{def:cumu} and \ref{def:Wick-product}, we then 
have for example the identity
\begin{equ}
\bigl(\PPi^\eps \<3>\bigr)(z) = 
\begin{tikzpicture}[scale=0.35,baseline=0cm]
	\node at (0,-1)  [root] (root) {};
	\node at (-1.5,1.5)  [var] (left) {};
	\node at (0,1.8)  [var] (middle) {};
	\node at (1.5,1.5)  [var] (right) {};
	
	\draw[kernel] (left) to  (root);	
	\draw[kernel] (right) to (root);
	\draw[kernel] (middle) to (root);
\end{tikzpicture}\;
 + 3\;
\begin{tikzpicture}[scale=0.35,baseline=0cm]
	\node at (0,-1)  [root] (root) {};
	\node[cumu2n] at (-1,1.6) (top) {};
	\node at (1,1.6)  [var] (topr) {};

	\draw[cumu2,rotate=20] (top) ellipse (20pt and 10pt);
	
	\draw[kernel,bend right = 60] ($(top)+(200:3mm)$) node[int]{} to  (root);	
	\draw[kernel,bend left = 60] ($(top)+(20:3mm)$) node[int]{} to (root);
	\draw[kernel,bend left = 60] (topr) to (root);
\end{tikzpicture}
\;+\;
\begin{tikzpicture}  [scale=0.35,baseline=0cm] 
\node[root]	(root) 	at (0,-1) {};
\node at (0,1.5) (a) {};  
	\node[cumu3,rotate=180] at (a) {};
\draw[kernel] ($(a)+(30:4mm)$) node[int]   {} to [bend left = 60] (root);
\draw[kernel] ($(a)+(150:4mm)$) node[int]   {} to [bend right = 60] (root);
\draw[kernel] ($(a)+(-90:4mm)$) node[int]   {} to (root);
\end{tikzpicture}
\;.
\end{equ}
A powercounting argument shows that this additional term also diverges
as $\eps \to 0$, which suggests that in this case we should no longer set
$C_2^{(\eps)} = 0$ as in the Gaussian case but instead we should choose the
relevant renormalisation constants as follows:
\begin{equ}[e:renorm1]
C_1^{(\eps)} = \;
\begin{tikzpicture}  [baseline=10] 
\node[root]	(root) 	at (0,0) {};
\node[cumu2n]	(a)  		at (0,1) {};	
\draw[cumu2] (a) ellipse (8pt and 4pt);
\draw[kernel] (a.east) node[int]   {} to [bend left = 60] (root);
\draw[kernel] (a.west) node[int]   {} to [bend right = 60] (root);
\end{tikzpicture}
\; \sim \eps^{-1}\;,\qquad 
C_2^{(\eps)} = 
\begin{tikzpicture}  [scale=0.35,baseline=0cm] 
\node[root]	(root) 	at (0,-1) {};
\node at (0,1.5) (a) {};  
	\node[cumu3,rotate=180] at (a) {};
\draw[kernel] ($(a)+(30:4mm)$) node[int]   {} to [bend left = 60] (root);
\draw[kernel] ($(a)+(150:4mm)$) node[int]   {} to [bend right = 60] (root);
\draw[kernel] ($(a)+(-90:4mm)$) node[int]   {} to (root);
\end{tikzpicture} \sim \eps^{-3/2}\;.
\end{equ}
This choice of renormalisation constant then yields just as in the Gaussian case 
\begin{equ}
\bigl(\hat \PPi^\eps \<3>\bigr)(z) = 
\begin{tikzpicture}[scale=0.35,baseline=0cm]
	\node at (0,-1)  [root] (root) {};
	\node at (-1.5,1.5)  [var] (left) {};
	\node at (0,1.8)  [var] (middle) {};
	\node at (1.5,1.5)  [var] (right) {};
	
	\draw[kernel] (left) to  (root);	
	\draw[kernel] (right) to (root);
	\draw[kernel] (middle) to (root);
\end{tikzpicture}\;,
\end{equ}
which can then be shown to converge to a finite limiting distribution 
as $\eps \to 0$.

\begin{remark}\label{rem:conv}
The fact that $\hat \PPi^\eps \<3>$ is tight in some space of distributions 
as $\eps \to 0$ can be shown quite easily by first testing against a test
function and then  bounding its second moment. Tightness in the correct topology
and identification of the correct limit is more delicate and requires control
on moments of all orders. Unfortunately, these can no longer be obtained
as a corollary of a second moment bound since the analogue to \eqref{e:hypercontract} 
does not hold anymore in general.

We will henceforth ignore these difficulties and admit without proof the 
fact that if $f$ is an $L^2$ kernel of $k$ variables
and $f_\eps$ is a sequence of ``sufficiently nice'' kernels converging to $f$, 
then the sequence of random variables
\begin{equ}
\int f_\eps(z_1,\ldots ,z_k)\, \Wick{\xi_\eps(z_1)\cdots \xi_\eps(z_k)}dz_1\cdots dz_k\;,
\end{equ}
converges weakly to $I_k(f)$ as $\eps \to 0$. This convergence furthermore
holds jointly for any finite collection of such random variables.
See \cite{HaoShen} for more details.
\end{remark}

The necessity of the remaining renormalisation constants can again be traced 
back to the analysis of $\hat \PPi^\eps \<32>$. This time, one has the 
much lengthier identity
\begin{equs}
\PPi^\eps \<32> &= 
\begin{tikzpicture}[scale=0.35,baseline=0cm]
	\node at (0,-1)  [root] (root) {};
	\node at (0,1.5)  [int] (middle) {};
	\node at (-1.6,1)  [var] (left) {};
	\node at (1.6,1)  [var] (right) {};
	\node at (-1.5,3.5)  [var] (tl) {};
	\node at (1.5,3.5)  [var] (tr) {};
	\node at (0,4)  [var] (tm) {};
	
	\draw[kernel] (left) to  (root);	
	\draw[kernel] (right) to (root);
	\draw[kernel] (tl) to  (middle);	
	\draw[kernel] (tr) to (middle);
	\draw[kernel] (tm) to  (middle);	
	\draw[kernel] (middle) to (root);
\end{tikzpicture}\; + 
6\;
\begin{tikzpicture}[scale=0.35,baseline=0cm]
	\node at (0,-1)  [root] (root) {};
	\node at (0,1.5)  [int] (middle) {};

	\node[cumu2n] at (-1.6,.25) (left) {};
	\draw[cumu2] (left) ellipse (10pt and 20pt);

	\node at (1.6,1)  [var] (right) {};
	\node at (1.5,3.5)  [var] (tr) {};
	\node at (0,4)  [var] (tm) {};
	
	\draw[kernel] (right) to (root);
	\draw[kernel,bend right=40] (left.south) node[int]{} to  (root);	
	\draw[kernel,bend left=40] (left.north) node[int]{} to  (middle);	
	\draw[kernel] (tr) to (middle);
	\draw[kernel] (tm) to  (middle);	
	\draw[kernel] (middle) to (root);
\end{tikzpicture}\; + 
3\;
\begin{tikzpicture}[scale=0.35,baseline=0cm]
	\node at (0,-1)  [root] (root) {};
	\node at (0,1.5)  [int] (middle) {};
	\node at (-1.6,1)  [var] (left) {};
	\node at (1.6,1)  [var] (right) {};
	\node[cumu2n] at (-1,3.6) (tl) {};
	\draw[cumu2,rotate=25] (tl) ellipse (20pt and 10pt);
	\node at (1,3.6)  [var] (tr) {};
	
	\draw[kernel] (left) to  (root);	
	\draw[kernel] (right) to (root);
	\draw[kernel,bend right = 60] ($(tl)+(205:3mm)$) node[int]{} to  (middle);	
	\draw[kernel,bend left = 60] ($(tl)+(25:3mm)$) node[int]{} to (middle);
	\draw[kernel,bend left = 60] (tr) to (middle);
	\draw[kernel] (middle) to (root);
\end{tikzpicture}\; + \;
\begin{tikzpicture}[scale=0.35,baseline=0cm]
	\node at (0,-1)  [root] (root) {};
	\node at (1,1.6)  [int] (middle) {};
	\node[cumu2n] at (-1,1.6) (left) {};
	\draw[cumu2,rotate=25] (left) ellipse (20pt and 10pt);
	\node at (-0.5,3.5)  [var] (tl) {};
	\node at (2.5,3.5)  [var] (tr) {};
	\node at (1,4)  [var] (tm) {};

	\draw[kernel,bend right = 60] ($(left)+(205:3mm)$) node[int]{} to  (root);	
	\draw[kernel,bend left = 60] ($(left)+(25:3mm)$) node[int]{} to (root);
	\draw[kernel,bend left = 60] (middle) to (root);
	
	\draw[kernel] (tl) to  (middle);	
	\draw[kernel] (tr) to (middle);
	\draw[kernel] (tm) to  (middle);	
\end{tikzpicture}\; + \;
\begin{tikzpicture}[scale=0.35,baseline=0cm]
	\node at (0,-1)  [root] (root) {};
	\node at (0,1.5)  [int] (middle) {};
	\node at (-1.6,1)  [var] (left) {};
	\node at (1.6,1)  [var] (right) {};
    \node at (0,4) (a) {};  
    	\node[cumu3,rotate=180] at (a) {};
    \draw[kernel] ($(a)+(30:4mm)$) node[int]   {} to [bend left = 60] (middle);
    \draw[kernel] ($(a)+(150:4mm)$) node[int]   {} to [bend right = 60] (middle);
    \draw[kernel] ($(a)+(-90:4mm)$) node[int]   {} to (middle);
	
	\draw[kernel] (left) to  (root);	
	\draw[kernel] (right) to (root);
	\draw[kernel] (middle) to (root);
\end{tikzpicture}
\\ 
&\qquad + 6\;
\begin{tikzpicture}[scale=0.35,baseline=0cm]
	\node at (0,-1)  [root] (root) {};
	\node at (0,1.5)  [int] (middle) {};
	\node at (-1.6,1)  [var] (left) {};
	\node at (-1.5,3.5)  [var] (tl) {};

    \node at (2.5,1.5) (a) {};  
    \node[cumu3,rotate=30] at (a) {};
    \draw[kernel] ($(a)+(0:4mm)$) node[int]   {} to [bend left = 60] (root);
    \draw[kernel] ($(a)+(120:4mm)$) node[int]   {} to [bend right = 60] (middle);
    \draw[kernel] ($(a)+(-120:4mm)$) node[int]   {} to [bend left = 60] (middle);
	
	\draw[kernel] (left) to  (root);	
	\draw[kernel] (tl) to  (middle);	
	\draw[kernel] (middle) to (root);
\end{tikzpicture}\; + 3\;
\begin{tikzpicture}[scale=0.35,baseline=0cm]
	\node at (1,-1)  [root] (root) {};
	\node at (0,1.5)  [int] (middle) {};
	\node at (0.5,4)  [var] (tm) {};
	\node at (-1.5,3.5)  [var] (tl) {};

    \node at (2.5,2) (a) {};  
    \node[cumu3,rotate=-30] at (a) {};
    \draw[kernel] ($(a)+(-60:4mm)$) node[int]   {} to [bend left = 60] (root);
    \draw[kernel] ($(a)+(180:4mm)$) node[int]   {} to [bend right = 60] (root);
    \draw[kernel] ($(a)+(60:4mm)$) node[int]   {} to [bend right = 60] (middle);
	
	\draw[kernel,bend right=30] (tl) to  (middle);	
	\draw[kernel,bend right=35] (tm) to  (middle);	
	\draw[kernel,bend right=40] (middle) to (root);
\end{tikzpicture}
\; + 6\;
\begin{tikzpicture}[scale=0.35,baseline=0cm]
	\node at (0,-1)  [root] (root) {};
	\node at (0,1.5)  [int] (middle) {};
	\node[cumu2n] at (-1.6,.25) (left) {};
	\draw[cumu2] (left) ellipse (10pt and 20pt);

    \node at (2.5,1.5) (a) {};  
    \node[cumu3,rotate=30] at (a) {};
    \draw[kernel] ($(a)+(0:4mm)$) node[int]   {} to [bend left = 60] (root);
    \draw[kernel] ($(a)+(120:4mm)$) node[int]   {} to [bend right = 60] (middle);
    \draw[kernel] ($(a)+(-120:4mm)$) node[int]   {} to [bend left = 60] (middle);
	
	\draw[kernel,bend right=40] (left.south) node[int]{} to  (root);	
	\draw[kernel,bend left=40] (left.north) node[int]{} to  (middle);	
	\draw[kernel,bend right=30] (middle) to (root);
\end{tikzpicture}
\; + 3\;
\begin{tikzpicture}[scale=0.35,baseline=0cm]
	\node at (1,-1)  [root] (root) {};
	\node at (0,1.5)  [int] (middle) {};
	\node[cumu2n] at (-2,3.5) (tl) {};
	\draw[cumu2,rotate=45] (tl) ellipse (20pt and 10pt);

    \node at (2.5,2) (a) {};  
    \node[cumu3,rotate=-30] at (a) {};
    \draw[kernel] ($(a)+(-60:4mm)$) node[int]   {} to [bend left = 60] (root);
    \draw[kernel] ($(a)+(180:4mm)$) node[int]   {} to [bend right = 60] (root);
    \draw[kernel] ($(a)+(60:4mm)$) node[int]   {} to [bend right = 60] (middle);
	
	\draw[kernel,bend left = 60] ($(tl)+(45:3mm)$) node[int]{} to  (middle);	
	\draw[kernel,bend right = 60] ($(tl)+(225:3mm)$) node[int]{} to (middle);
	\draw[kernel,bend right=40] (middle) to (root);
\end{tikzpicture}
\\ &\qquad
 + \;
\begin{tikzpicture}[scale=0.35,baseline=0cm]
	\node at (-1,-1)  [root] (root) {};
	\node at (0,1.5)  [int] (middle) {};
	\node[cumu2n] at (-2,1.6) (left) {};
	\draw[cumu2,rotate=25] (left) ellipse (20pt and 10pt);
    \node at (0,4) (a) {};  
    	\node[cumu3,rotate=180] at (a) {};
    \draw[kernel] ($(a)+(30:4mm)$) node[int]   {} to [bend left = 60] (middle);
    \draw[kernel] ($(a)+(150:4mm)$) node[int]   {} to [bend right = 60] (middle);
    \draw[kernel] ($(a)+(-90:4mm)$) node[int]   {} to (middle);
	
	\draw[kernel,bend right = 60] ($(left)+(205:3mm)$) node[int]{} to  (root);	
	\draw[kernel,bend left = 60] ($(left)+(25:3mm)$) node[int]{} to (root);
	\draw[kernel,bend left=60] (middle) to (root);
\end{tikzpicture}
\; + 3\;
\begin{tikzpicture}[scale=0.35,baseline=0cm]
	\node at (0,-1)  [root] (root) {};
	\node at (1,1.6)  [int] (middle) {};
	\node[cumu2n] at (-1,1.6) (left) {};
	\draw[cumu2,rotate=25] (left) ellipse (20pt and 10pt);
	\node[cumu2n] at (0,3.6) (tl) {};
	\draw[cumu2,rotate=25] (tl) ellipse (20pt and 10pt);
	\node at (2,3.6)  [var] (tr) {};

	\draw[kernel,bend right = 60] ($(left)+(205:3mm)$) node[int]{} to  (root);	
	\draw[kernel,bend left = 60] ($(left)+(25:3mm)$) node[int]{} to (root);
	\draw[kernel,bend left = 60] (middle) to (root);
	
	\draw[kernel,bend right = 60] ($(tl)+(205:3mm)$) node[int]{} to  (middle);	
	\draw[kernel,bend left = 60] ($(tl)+(25:3mm)$) node[int]{} to (middle);
	\draw[kernel,bend left = 60] (tr) to (middle);
\end{tikzpicture}\; + 6\;
\begin{tikzpicture}[scale=0.35,baseline=0cm]
	\node at (0,-1)  [root] (root) {};
	\node at (0,1.5)  [int] (middle) {};
	\node[cumu2n] at (-1.6,.25) (left) {};
	\draw[cumu2] (left) ellipse (10pt and 20pt);
	\node at (1.6,1)  [var] (right) {};
	\node[cumu2n] at (0,3.5) (top) {};
	\draw[cumu2] (top) ellipse (20pt and 10pt);
	
	\draw[kernel,bend right=40] (left.south) node[int]{} to  (root);	
	\draw[kernel] (right) to (root);
	\draw[kernel,bend left=40] (left.north) node[int]{} to  (middle);	
	\draw[kernel,bend right = 60] (top.west) node[int]{} to (middle);
	\draw[kernel,bend left = 60] (top.east) node[int]{} to  (middle);	
	\draw[kernel] (middle) to (root);
\end{tikzpicture}\; + 6\;
\begin{tikzpicture}[scale=0.35,baseline=0cm]
	\node at (0,-1)  [root] (root) {};
	\node at (0,1.5)  [int] (middle) {};
	\node[cumu2n] at (-1.6,.25) (left) {};
	\draw[cumu2] (left) ellipse (10pt and 20pt);
	\node[cumu2n] at (1.6,.25) (right) {};
	\draw[cumu2] (right) ellipse (10pt and 20pt);
	\node at (0,4)  [var] (tm) {};
	
	\draw[kernel,bend right=40] (left.south) node[int]{} to  (root);	
	\draw[kernel,bend left=40] (left.north) node[int]{} to  (middle);	
	\draw[kernel,bend left=40] (right.south) node[int]{} to  (root);	
	\draw[kernel,bend right=40] (right.north) node[int]{} to  (middle);	
	\draw[kernel] (tm) to  (middle);	
	\draw[kernel] (middle) to (root);
\end{tikzpicture}
\\ &\qquad 
 + 2\;
\begin{tikzpicture}[scale=0.35,baseline=0cm]
	\node at (0,-1)  [root] (root) {};
	\node at (0,1.5)  [int] (middle) {};
	\node at (-1.6,1)  [var] (left) {};

    \node at (1.5,3) (a) {};  
    \node[cumu4] at (a) {};
    \draw[kernel] (a.north east) node[int]   {} to [out=0,in=0] (root);
    \draw[kernel] (a.south west) node[int]   {} to (middle);
    \draw[kernel] (a.north west) node[int]   {} to [bend right = 60] (middle);
    \draw[kernel] (a.south east) node[int]   {} to [bend left = 60] (middle);
	
	\draw[kernel] (left) to  (root);	
	\draw[kernel] (middle) to (root);
\end{tikzpicture}
\;+ 3\;
\begin{tikzpicture}[scale=0.35,baseline=0cm]
	\node at (0,-1)  [root] (root) {};
	\node at (0,1.5)  [int] (middle) {};

    \node at (1.5,.25) (a) {};  
    \node[cumu4] at (a) {};
	\node at (0,4)  [var] (tm) {};
	
	\draw[kernel,bend right=40] (a.south west) node[int]{} to  (root);	
	\draw[kernel,bend left=40] (a.north west) node[int]{} to  (middle);	
	\draw[kernel,bend left=40] (a.south east) node[int]{} to  (root);	
	\draw[kernel,bend right=40] (a.north east) node[int]{} to  (middle);	
	\draw[kernel] (tm) to  (middle);	
	\draw[kernel, bend right=60] (middle) to (root);
\end{tikzpicture}
\; + \;
\begin{tikzpicture}[scale=0.35,baseline=0cm]
	\node at (0,-1)  [root] (root) {};
	\node at (0,1)  [int] (middle) {};

    \node at (0,3) (a) {};  
    \node[cumu5,rotate=36] at (a) {};
    \draw[kernel] ($(a)+(-90:5mm)$) node[int]   {} to (middle);
    \draw[kernel] ($(a)+(-18:5mm)$) node[int]   {} to [bend left = 40] (middle);
    \draw[kernel] ($(a)+(-162:5mm)$) node[int]   {} to [bend right = 40] (middle);
    \draw[kernel] ($(a)+(54:5mm)$) node[int]   {} to [bend left = 70] (root);
    \draw[kernel] ($(a)+(126:5mm)$) node[int]   {} to [bend right = 70] (root);
	
	\draw[kernel] (middle) to (root);
\end{tikzpicture}
\;.
\end{equs}
At this stage, it starts to become clear that
on should in principle start to use a more systematic approach,
but for the sake of this article, and in order to give a feeling for the kind
of bounds involved in the proofs, our more ``brutal'' approach is sufficient.

In order to bound $\PPi^\eps \<32>$ we first note that as a consequence of the
fact that $K$ integrates to $0$, both the last term on the first line and
the first term on the third line of the above expression vanish.
This time, taking into account the fact that all of the $C_i^{(\eps)}$ can be chosen
to be non-zero, we obtain the identity
\begin{equ}
\hat \PPi^\eps \<32> = \PPi^\eps\<32> - 3C_1^{(\eps)} \PPi^\eps \<12> - C_1^{(\eps)} \PPi^\eps \<30>
+ 3 (C_1^{(\eps)})^2 \PPi^\eps \<10> - 3 C_3^{(\eps)} \PPi^\eps\<1>
- C_4^{(\eps)}\PPi^\eps\1 - 2C_5^{(\eps)}\PPi^\eps\<1> \;.
\end{equ}
We claim that besides \eqref{e:renorm1}, the correct choice of renormalisation constants
in order to obtain the same limiting model as in Section~\ref{sec:convGauss} is given by
\begin{equs}
C_3^{(\eps)} 
= 
2\;
\begin{tikzpicture}[scale=0.35,baseline=0cm]
	\node at (0,-1)  [root] (root) {};
	\node at (0,3)  [int] (middle) {};
	\node[cumu2n] at (-1.6,1) (left) {};
	\draw[cumu2] (left) ellipse (10pt and 20pt);
	\node[cumu2n] at (1.6,1) (right) {};
	\draw[cumu2] (right) ellipse (10pt and 20pt);
	
	\draw[kernel,bend right=40] (left.south) node[int]{} to  (root);	
	\draw[kernel,bend left=40] (left.north) node[int]{} to  (middle);	
	\draw[kernel,bend left=40] (right.south) node[int]{} to  (root);	
	\draw[kernel,bend right=40] (right.north) node[int]{} to  (middle);	
	\draw[kernel] (middle) to (root);
\end{tikzpicture}
\;+
\begin{tikzpicture}[scale=0.35,baseline=0cm]
	\node at (0,-1)  [root] (root) {};
	\node at (0,3)  [int] (middle) {};

    \node at (1.5,1) (a) {};  
    \node[cumu4] at (a) {};
	
	\draw[kernel,bend right=40] (a.south west) node[int]{} to  (root);	
	\draw[kernel,bend left=40] (a.north west) node[int]{} to  (middle);	
	\draw[kernel,bend left=40] (a.south east) node[int]{} to  (root);	
	\draw[kernel,bend right=40] (a.north east) node[int]{} to  (middle);	
	\draw[kernel, bend right=60] (middle) to (root);
\end{tikzpicture}
\;,\quad
C_4^{(\eps)} 
= 
\begin{tikzpicture}[scale=0.35,baseline=0cm]
	\node at (0,-1)  [root] (root) {};
	\node at (0,1)  [int] (middle) {};

    \node at (0,3) (a) {};  
    \node[cumu5,rotate=36] at (a) {};
    \draw[kernel] ($(a)+(-90:5mm)$) node[int]   {} to (middle);
    \draw[kernel] ($(a)+(-18:5mm)$) node[int]   {} to [bend left = 40] (middle);
    \draw[kernel] ($(a)+(-162:5mm)$) node[int]   {} to [bend right = 40] (middle);
    \draw[kernel] ($(a)+(54:5mm)$) node[int]   {} to [bend left = 70] (root);
    \draw[kernel] ($(a)+(126:5mm)$) node[int]   {} to [bend right = 70] (root);
	
	\draw[kernel] (middle) to (root);
\end{tikzpicture}
 + 6\;
\begin{tikzpicture}[scale=0.35,baseline=0cm]
	\node at (0,-1)  [root] (root) {};
	\node at (0,1.5)  [int] (middle) {};
	\node[cumu2n] at (-1.6,.25) (left) {};
	\draw[cumu2] (left) ellipse (10pt and 20pt);

    \node at (2.5,1.5) (a) {};  
    \node[cumu3,rotate=30] at (a) {};
    \draw[kernel] ($(a)+(0:4mm)$) node[int]   {} to [bend left = 60] (root);
    \draw[kernel] ($(a)+(120:4mm)$) node[int]   {} to [bend right = 60] (middle);
    \draw[kernel] ($(a)+(-120:4mm)$) node[int]   {} to [bend left = 60] (middle);
	
	\draw[kernel,bend right=40] (left.south) node[int]{} to  (root);	
	\draw[kernel,bend left=40] (left.north) node[int]{} to  (middle);	
	\draw[kernel,bend right=30] (middle) to (root);
\end{tikzpicture}
\;,\quad
C_5^{(\eps)}
=
\begin{tikzpicture}[scale=0.35,baseline=0cm]
	\node at (0,-1)  [root] (root) {};
	\node at (0,1.5)  [int] (middle) {};

    \node at (1.5,3) (a) {};  
    \node[cumu4] at (a) {};
    \draw[kernel] (a.north east) node[int]   {} to [out=0,in=0] (root);
    \draw[kernel] (a.south west) node[int]   {} to (middle);
    \draw[kernel] (a.north west) node[int]   {} to [bend right = 60] (middle);
    \draw[kernel] (a.south east) node[int]   {} to [bend left = 60] (middle);
	
	\draw[kernel,bend right=30] (middle) to (root);
\end{tikzpicture}
\;.
\end{equs}
With this choice, a tedious but straightforward graphical calculation yields the
identity
\begin{equs}
\hat \PPi^\eps \<32> &= 
\begin{tikzpicture}[scale=0.35,baseline=0cm]
	\node at (0,-1)  [root] (root) {};
	\node at (0,1.5)  [int] (middle) {};
	\node at (-1.6,1)  [var] (left) {};
	\node at (1.6,1)  [var] (right) {};
	\node at (-1.5,3.5)  [var] (tl) {};
	\node at (1.5,3.5)  [var] (tr) {};
	\node at (0,4)  [var] (tm) {};
	
	\draw[kernel] (left) to  (root);	
	\draw[kernel] (right) to (root);
	\draw[kernel] (tl) to  (middle);	
	\draw[kernel] (tr) to (middle);
	\draw[kernel] (tm) to  (middle);	
	\draw[kernel] (middle) to (root);
\end{tikzpicture}\; + 
6\;
\begin{tikzpicture}[scale=0.35,baseline=0cm]
	\node at (0,-1)  [root] (root) {};
	\node at (0,1.5)  [int] (middle) {};

	\node[cumu2n] at (-1.6,.25) (left) {};
	\draw[cumu2] (left) ellipse (10pt and 20pt);

	\node at (1.6,1)  [var] (right) {};
	\node at (1.5,3.5)  [var] (tr) {};
	\node at (0,4)  [var] (tm) {};
	
	\draw[kernel] (right) to (root);
	\draw[kernel,bend right=40] (left.south) node[int]{} to  (root);	
	\draw[kernel,bend left=40] (left.north) node[int]{} to  (middle);	
	\draw[kernel] (tr) to (middle);
	\draw[kernel] (tm) to  (middle);	
	\draw[kernel] (middle) to (root);
\end{tikzpicture}
\;
 + 6\;
\begin{tikzpicture}[scale=0.35,baseline=0cm]
	\node at (0,-1)  [root] (root) {};
	\node at (0,1.5)  [int] (middle) {};
	\node[cumu2n] at (-1.6,.25) (left) {};
	\draw[cumu2] (left) ellipse (10pt and 20pt);
	\node[cumu2n] at (1.6,.25) (right) {};
	\draw[cumu2] (right) ellipse (10pt and 20pt);
	\node at (0,4)  [var] (tm) {};
	
	\draw[kernel,bend right=40] (left.south) node[int]{} to  (root);	
	\draw[kernel,bend left=40] (left.north) node[int]{} to  (middle);	
	\draw[kernel,bend left=40] (right.south) node[int]{} to  (root);	
	\draw[kernel,bend right=40] (right.north) node[int]{} to  (middle);	
	\draw[kernel] (tm) to  (middle);	
	\draw[kernel] (middle) to (root);
\end{tikzpicture}
\; - 6\;
\begin{tikzpicture}[scale=0.35,baseline=0cm]
	\node at (0,-1)  [root] (root) {};
	\node at (0,1.5)  [int] (middle) {};
	\node[cumu2n] at (-1.6,.25) (left) {};
	\draw[cumu2] (left) ellipse (10pt and 20pt);
	\node[cumu2n] at (1.6,.25) (right) {};
	\draw[cumu2] (right) ellipse (10pt and 20pt);
	\node at (0,4)  [var] (tm) {};
	
	\draw[kernel,bend right=40] (left.south) node[int]{} to  (root);	
	\draw[kernel,bend left=40] (left.north) node[int]{} to  (middle);	
	\draw[kernel,bend left=40] (right.south) node[int]{} to  (root);	
	\draw[kernel,bend right=40] (right.north) node[int]{} to  (middle);	
	\draw[kernel] (tm) .. controls +(right:2.8cm) and +(-30:3.5cm) .. (root);	
	\draw[kernel] (middle) to (root);
\end{tikzpicture}\label{e:Pihateps32}
\\ &\qquad
 + 6\;
\begin{tikzpicture}[scale=0.35,baseline=0cm]
	\node at (0,-1)  [root] (root) {};
	\node at (0,1.5)  [int] (middle) {};
	\node at (-1.6,1)  [var] (left) {};
	\node at (-1.5,3.5)  [var] (tl) {};

    \node at (2.5,1.5) (a) {};  
    \node[cumu3,rotate=30] at (a) {};
    \draw[kernel] ($(a)+(0:4mm)$) node[int]   {} to [bend left = 60] (root);
    \draw[kernel] ($(a)+(120:4mm)$) node[int]   {} to [bend right = 60] (middle);
    \draw[kernel] ($(a)+(-120:4mm)$) node[int]   {} to [bend left = 60] (middle);
	
	\draw[kernel] (left) to  (root);	
	\draw[kernel] (tl) to  (middle);	
	\draw[kernel] (middle) to (root);
\end{tikzpicture}
\; + 3\;
\begin{tikzpicture}[scale=0.35,baseline=0cm]
	\node at (1,-1)  [root] (root) {};
	\node at (0,1.5)  [int] (middle) {};
	\node at (0.5,4)  [var] (tm) {};
	\node at (-1.5,3.5)  [var] (tl) {};

    \node at (2.5,2) (a) {};  
    \node[cumu3,rotate=-30] at (a) {};
    \draw[kernel] ($(a)+(-60:4mm)$) node[int]   {} to [bend left = 60] (root);
    \draw[kernel] ($(a)+(180:4mm)$) node[int]   {} to [bend right = 60] (root);
    \draw[kernel] ($(a)+(60:4mm)$) node[int]   {} to [bend right = 60] (middle);
	
	\draw[kernel,bend right=30] (tl) to  (middle);	
	\draw[kernel,bend right=35] (tm) to  (middle);	
	\draw[kernel,bend right=40] (middle) to (root);
\end{tikzpicture}
\; + 3\;
\begin{tikzpicture}[scale=0.35,baseline=0cm]
	\node at (0,-1)  [root] (root) {};
	\node at (0,1.5)  [int] (middle) {};

    \node at (1.5,.25) (a) {};  
    \node[cumu4] at (a) {};
	\node at (0,4)  [var] (tm) {};
	
	\draw[kernel,bend right=40] (a.south west) node[int]{} to  (root);	
	\draw[kernel,bend left=40] (a.north west) node[int]{} to  (middle);	
	\draw[kernel,bend left=40] (a.south east) node[int]{} to  (root);	
	\draw[kernel,bend right=40] (a.north east) node[int]{} to  (middle);	
	\draw[kernel] (tm) to  (middle);	
	\draw[kernel, bend right=60] (middle) to (root);
\end{tikzpicture}
\; - 3\;
\begin{tikzpicture}[scale=0.35,baseline=0cm]
	\node at (0,-1)  [root] (root) {};
	\node at (0,1.5)  [int] (middle) {};

    \node at (1.5,.25) (a) {};  
    \node[cumu4] at (a) {};
	\node at (0,4)  [var] (tm) {};
	
	\draw[kernel,bend right=40] (a.south west) node[int]{} to  (root);	
	\draw[kernel,bend left=40] (a.north west) node[int]{} to  (middle);	
	\draw[kernel,bend left=40] (a.south east) node[int]{} to  (root);	
	\draw[kernel,bend right=40] (a.north east) node[int]{} to  (middle);	
	\draw[kernel] (tm) to[bend right=90]   (root);	
	\draw[kernel, bend right=60] (middle) to (root);
\end{tikzpicture}
\;.
\end{equs}
The four terms appearing on the first line are direct analogues of the terms
appearing in the Gaussian case \eqref{e:Pihat32}. In view of Remark~\ref{rem:conv}
it is therefore at least very plausible that they converge to the same limit.
It remains to argue that the terms appearing on the last line vanish in the
limit $\eps \to 0$. For this, the following simple lemma (see \cite[Lem.~4.7]{HaoShen}
for a slightly more general version)
is useful:

\begin{lemma}  \label{lem:collapse}
One has the bound
\begin{equs} 
\int
\prod_{i=1}^n |K(z_i - \bar z_i)|
|\kappa^{(\eps)}_n (\bar z_1, \ldots,  \bar z_n)|
  \,d\bar z_1\cdots d\bar z_n
\lesssim \eps^{5(n/2-1)}
\int_{\R^4}
 	\prod_{i=1}^n  
	\big(|z_i - \bar z|+\eps\big)^{-3}
\, d\bar z \;,
\end{equs}
uniformly over $z_1,\ldots,z_n \in\R^4$ and $\eps>0$.
\end{lemma}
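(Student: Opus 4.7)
My plan is to use the scaling of $\kappa_n^{(\eps)}$ together with the approximate compact support of the cumulants to reduce matters to a single integral of $|K|$ against a probe concentrated on a ball of radius $\eps$, and then to handle the remaining singularity of $|K|$ near the origin via an $L^1$ bound.

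First, by translation invariance and the scaling of $\eta$'s cumulants,
\[
|\kappa_n^{(\eps)}(\bar z_1,\ldots,\bar z_n)| = \eps^{-5n/2}\,|\kappa_n(S_\eps\bar z_1,\ldots,S_\eps\bar z_n)|,
\]
where $S_\eps(t,x)=(t/\eps^2,x/\eps)$. Because $\eta$ has finite range of dependence and sufficient moments, the cluster property of cumulants forces $\kappa_n$ to vanish as soon as its arguments split into two groups at mutual distance at least $1$; hence $\kappa_n$ is bounded and $\kappa_n^{(\eps)}$ is controlled by $\eps^{-5n/2}$ times the indicator $\mathbf{1}_{\{|\bar z_i-\bar z_1|_\s\le C\eps\;\forall i\}}$ for an appropriate $C=C_n$. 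Now substitute $\bar z_i=\bar z_1+S_\eps^{-1}w_i$ for $i\ge 2$, each substitution contributing a Jacobian of $\eps^5$. The prefactor $\eps^{-5n/2}\cdot\eps^{5(n-1)}=\eps^{5(n/2-1)}$ matches the one appearing in the statement, and the $w_i$ now range over a fixed bounded set.

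Next, I would carry out the $w_i$ integrals for $i\ge 2$ independently. Using only $|K(z)|\lesssim |z|_\s^{-3}$, each one satisfies
\[
\int_{|w_i|_\s\le C}|K(z_i-\bar z_1-S_\eps^{-1}w_i)|\,dw_i = \eps^{-5}\int_{|u|_\s\le C\eps}|K(z_i-\bar z_1-u)|\,du \lesssim (|z_i-\bar z_1|_\s+\eps)^{-3},
\]
treating separately the regimes $|z_i-\bar z_1|_\s\ge 2C\eps$ (where the kernel on the ball of $u$'s is comparable to $|z_i-\bar z_1|_\s^{-3}$) and $|z_i-\bar z_1|_\s<2C\eps$ (where one uses $\int_{|v|_\s\le 3C\eps}|v|_\s^{-3}\,dv\lesssim\eps^2$, giving $\eps^{-5}\cdot\eps^2=\eps^{-3}\asymp(|z_i-\bar z_1|_\s+\eps)^{-3}$). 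Relabelling $\bar z:=\bar z_1$, this reduces the lemma to the bound
\[
\int d\bar z\;|K(z_1-\bar z)|\prod_{i=2}^n(|z_i-\bar z|_\s+\eps)^{-3}\;\lesssim\;\int d\bar z\prod_{i=1}^n(|z_i-\bar z|_\s+\eps)^{-3}.
\]

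The final step is where the real difficulty lies, since one cannot bound $|K(z_1-\bar z)|$ pointwise by $(|z_1-\bar z|_\s+\eps)^{-3}$: the former blows up like $|z_1-\bar z|_\s^{-3}$ as $\bar z\to z_1$. I would therefore split the $\bar z$-integral according to whether $|z_1-\bar z|_\s\ge\eps$ or $|z_1-\bar z|_\s<\eps$. On the first region the pointwise bound $|K(z_1-\bar z)|\lesssim(|z_1-\bar z|_\s+\eps)^{-3}$ is valid and immediately yields the desired right-hand side. On the second region, I would use instead the $L^1$-bound $\int_{|w|_\s<\eps}|K(w)|\,dw\lesssim\eps^2$; here the remaining factors $(|z_i-\bar z|_\s+\eps)^{-3}$ for $i\ge 2$ are comparable to $(|z_i-z_1|_\s+\eps)^{-3}$ (either directly, when $|z_i-z_1|_\s\ge 2\eps$, or by both being of order $\eps^{-3}$ otherwise), so this contribution is at most $\eps^2\prod_{i\ge 2}(|z_i-z_1|_\s+\eps)^{-3}$. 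This is in turn dominated by the contribution to $\int\prod_{i=1}^n(|z_i-\bar z|_\s+\eps)^{-3}\,d\bar z$ from the ball $|\bar z-z_1|_\s\le\eps$, which is $\gtrsim\eps^{-3}\cdot\eps^5\cdot\prod_{i\ge 2}(|z_i-z_1|_\s+\eps)^{-3}$. The main obstacle is thus this pointwise-versus-$L^1$ dichotomy for $|K|$ at its singularity, which forces the region splitting rather than a direct pointwise comparison.
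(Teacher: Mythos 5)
Your proof is correct and complete. The paper does not actually present a proof of this lemma---it defers to \cite[Lem.~4.7]{HaoShen}---but your argument is the natural one: rescale $\kappa_n^{(\eps)}$, use the cluster property (finite range of dependence) to localise the cumulant to a parabolic ball of radius $\lesssim\eps$ and to bound it by a constant, integrate out the $n-1$ auxiliary variables $w_i$ against $|K|$ to produce the factors $(|z_i-\bar z|_\s+\eps)^{-3}$, and then split the remaining $\bar z$-integral at scale $\eps$ to replace the pointwise bound $|K(z)|\lesssim(|z|_\s+\eps)^{-3}$ (only valid for $|z|_\s\ge\eps$) by the $L^1$ bound $\int_{|w|_\s<\eps}|K(w)|\,dw\lesssim\eps^2$; all the power counting (Jacobian $\eps^{5(n-1)}$, cumulant prefactor $\eps^{-5n/2}$, parabolic ball volume $\eps^5$) is accurate and the final comparison against the $|\bar z-z_1|_\s\le\eps$ portion of the right-hand side integral closes the argument.
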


Recall that since we are in space dimension $3$,
both the heat kernel and its truncation $K$ satisfy the bounds
$|K(z)| \lesssim |z|^{-3}$, so that each factor appearing on the right hand
side of this bound behaves essentially like a factor $K_\eps$. 
In particular, writing
\begin{equ}
Q^{(1)}_\eps = 
\begin{tikzpicture}[scale=0.35,baseline=0cm]
	\node at (0,-1)  [root] (root) {};
	\node at (0,3)  [root] (middle) {};

    \node at (2,1) (a) {};  
    \node[cumu3,rotate=-30] at (a) {};
    \draw[kernel] ($(a)+(-60:4mm)$) node[int]   {} to [bend left = 40] (root);
    \draw[kernel] ($(a)+(180:4mm)$) node[int]   {} to [bend left = 40] (middle);
    \draw[kernel] ($(a)+(60:4mm)$) node[int]   {} to [bend right = 40] (middle);
	
	\draw[kernel,bend right=40] (middle) to (root);
\end{tikzpicture}
\;,\qquad
Q^{(2)}_\eps = 
\begin{tikzpicture}[scale=0.35,baseline=0cm]
	\node at (0,-1)  [root] (root) {};
	\node at (0,3)  [root] (middle) {};

    \node at (2,1) (a) {};  
    \node[cumu3,rotate=-30] at (a) {};
    \draw[kernel] ($(a)+(-60:4mm)$) node[int]   {} to [bend left = 40] (root);
    \draw[kernel] ($(a)+(180:4mm)$) node[int]   {} to [bend right = 40] (root);
    \draw[kernel] ($(a)+(60:4mm)$) node[int]   {} to [bend right = 40] (middle);
	
	\draw[kernel,bend right=40] (middle) to (root);
\end{tikzpicture}
\;,\qquad
Q^{(3)}_\eps = 
\begin{tikzpicture}[scale=0.35,baseline=0cm]
	\node at (0,-1)  [root] (root) {};
	\node at (0,3)  [root] (middle) {};

    \node at (1.5,1) (a) {};  
    \node[cumu4] at (a) {};
	
	\draw[kernel,bend right=40] (a.south west) node[int]{} to  (root);	
	\draw[kernel,bend left=40] (a.north west) node[int]{} to  (middle);	
	\draw[kernel,bend left=40] (a.south east) node[int]{} to  (root);	
	\draw[kernel,bend right=40] (a.north east) node[int]{} to  (middle);	
	\draw[kernel, bend right=60] (middle) to (root);
\end{tikzpicture}\;,
\end{equ}
with the same conventions as in \eqref{e:defQeps}, it follows
that the kernels $Q^{(i)}_\eps$ satisfy the bounds
\begin{equ}
|Q^{(1)}_\eps(z)| \lesssim \eps^\kappa |z|^{-{9\over 2}-\kappa}\;,\qquad
|Q^{(2)}_\eps(z)| \lesssim \eps^\kappa |z|^{-{9\over 2}-\kappa}\;,\qquad
|Q^{(3)}_\eps(z)| \lesssim \eps^\kappa (|z|+\eps)^{-5-\kappa}\;,
\end{equ}
provided that one chooses $\kappa\ge 0$ sufficiently small.
In particular, both $Q^{(1)}_\eps$ and $Q^{(2)}_\eps$ converge to $0$ in
$L^p$ for $p$ sufficiently small (but still greater than $1$), which allows to
show that the first two terms on the second line of \eqref{e:Pihateps32}
vanish in the limit.
To show that the last two terms also vanish in the limit, one needs to exploit 
cancellations between them: taken separately they converge to finite non-vanishing 
limits. However, since $Q^{(3)}_\eps$ is integrable with its integral remaining uniformly
bounded as $\eps \to 0$, it is easy to show that 
$Q^{(3)}_\eps(\cdot) - C_{3,2}^{(\eps)}\delta(\cdot)$ converges weakly to $0$, where $C_{3,2}^{(\eps)}$
denotes the second term appearing in the above definition of $C_3^{(\eps)}$.
While this in itself is not sufficient to guarantee that the sum of the last two terms
in \eqref{e:Pihateps32} indeed vanishes in the limit, it is a strong indication that
it does, and is in fact not very difficult to show. 

In a similar way, it is possible to show that $\hat \PPi^\eps \tau$ converges to the
same limit as in Section~\ref{sec:convGauss} for every symbol $\tau \in \CW$.
With some additional effort, one can then show that this convergence actually takes
place in the topology of the space of models for $(T,\CG)$ which, when combined
with the continuity statement of Theorem~\ref{theo:localSol}, the identification of
the renormalised solutions of Proposition~\ref{prop:renormEquation}, and the explicit expressions of the 
renormalisation constants, immediately implies
that the solutions to \eqref{e:solNonGauss} with constants
as in \eqref{e:constNonGauss} do indeed converge to the same
limiting object as constructed in Theorem~\ref{theo:construction}.

\subsection{Non-cubic approximations}

In this last section, we give a very short sketch of the main ideas appearing in the
proof of Theorem~\ref{theo:universal}. Recall that we are interested in the study
of \eqref{e:modelV} as the effective potential $\scal{V_\theta}$ undergoes a 
pitchfork bifurcation. In order to 
concentrate on the main ideas, we restrict ourselves to the case where $V$ is of degree $6$.
Then, normalising the equation such that
$\d_\theta\d_x^2 \scal{V_\theta}(0) = -{1\over 2}$ and $\scal{V_0}^{(4)}(0) = 6$, 
the derivative of the effective potential $\scal{V_\theta}$ is given to lowest order in $\theta$ by
\begin{equ}[e:potV]
- \scal{V_\theta}'(\Phi) = \theta \Phi - \Phi^3 - a\Phi^5\;,
\end{equ} 
for some constant $a$. From now on we neglect terms of higher order in $\theta$ and we 
consider the case of a potential with derivative equal to \eqref{e:potV}.

Note that \eqref{e:potV} is the derivative of the \textit{effective} potential. 
After rescaling, the model \eqref{e:modelV} we consider is therefore given by
\begin{equs}
\d_t \Phi_{\eps,\theta} &= \Delta \Phi_{\eps,\theta} + \theta \eps^{-1} \Phi - H_3(\Phi,C/\eps) - a \eps H_5(\Phi,C/\eps) + \xi_\eps \label{e:fullEqu}\\
&= \Delta \Phi_{\eps,\theta} + \eps^{-1}(\theta + 3C - 15aC^2) \Phi - (1+10aC)\Phi^3 - a \eps \Phi^5 + \xi_\eps \;,
\end{equs}
where $C$ is the constant appearing in \eqref{e:defCC}. In this equation, 
$H_n(x,c)$ denotes the $n$th Hermite polynomial with variance $c$, i.e.\ 
$H_3(x,c) = x^3 - 3cx$, etc. A special case of Theorem~\ref{theo:universal} is then given
by, the following

\begin{theorem}\label{theo:universalWeak}
There exist values $a > 0$ and $b \in \R$, such that if one sets 
$\theta = a \eps \log \eps + b \eps$, then the solution to \eqref{e:fullEqu} converges
as $\eps \to 0$ to the process $\Phi$ built in Theorem~\ref{theo:construction}.
\end{theorem}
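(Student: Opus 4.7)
The overall plan is to mimic the three-step scheme of Sections~\ref{sec:Phi}--\ref{sec:CLT}: (i) lift \eqref{e:fullEqu} to an abstract fixed point problem in $\CD^{\gamma,\eta}$ for a sufficiently rich regularity structure; (ii) prove convergence of an appropriately renormalised model; (iii) check that the renormalised equation one reads off from the analogue of Proposition~\ref{prop:renormEquation} is, at leading order in $\eps$, the classical dynamical $\Phi^4_3$ equation.

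First I would enlarge the regularity structure $\TT$ constructed in Section~\ref{sec:Phi} by closing the inductive rule \eqref{e:induction} not only under $\CI$ applied to cubes $\tau_1\tau_2\tau_3$ but also under $\CI$ applied to quintic products $\tau_1\cdots\tau_5$. The key observation here is that the additional symbols created in this way (for instance $\CI(\<1>\<1>\<1>\<1>\<1>)$ and its ``children'') have homogeneity bounded below by $-5/2$, and more importantly they all appear in \eqref{e:fullEqu} with a prefactor $a\eps$, so their contribution to any $\CD^{\gamma,\eta}$-norm carries an extra factor of $\eps$. The abstract equation is then
\begin{equ}
\Phi = \CP\one_{t>0}\bigl(\sXi - \Phi^3 - a\eps\,\Phi^5 + c_\eps \Phi\bigr) + P\Phi_0\;,
\qquad c_\eps = \eps^{-1}\bigl(\theta + 3C - 15aC^2\bigr)\;,
\end{equ}
which by the same arguments as in Theorem~\ref{theo:localSol} admits unique local solutions, depending continuously on the admissible model and on the coupling parameters. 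The crucial \emph{a priori} bound I would establish is that the fixed point map in $\CD^{\gamma,\eta}$ for this extended system differs from that of \eqref{e:abstractFull} by quantities bounded by $\eps^{1-\kappa}$ (uniformly on bounded sets of models), so that the extra symbols drop out of the limit.

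Next I would define the renormalised model. The Hermite structure of the nonlinearity in \eqref{e:fullEqu} is precisely what one would obtain by passing to a Wick-ordered product against the stationary Gaussian with covariance $C/\eps$; in the language of Section~\ref{sec:renorm} this amounts to using the extended renormalisation group generated not only by $L_1,\ldots,L_5$ but also by the contractions that arise when pairing legs of the new $\Phi^5$ vertex. The renormalisation constants have a systematic interpretation as integrals of products of $K_\eps$ against themselves in the shape of the appropriate Feynman diagrams. The most divergent contributions, namely $3C/\eps$ from the self-contraction of $\Phi^3$ and $15aC^2/\eps$ from the two-loop self-contraction of $a\eps\,\Phi^5$, are precisely the ones built into the Hermite renormalisation. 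The subleading (logarithmically divergent) contributions come from genuinely new diagrams: the two-loop ``sunset'' involving a single $\Phi^5$ vertex and the cross-diagrams involving one $\Phi^5$ and one $\Phi^3$ vertex. One then checks, diagram by diagram following the argument of Section~\ref{sec:convGauss} and Lemma~\ref{lem:collapse} (or rather their analogues), that upon subtracting these counterterms the renormalised model $\hat\PPi^\eps$ converges to the \emph{same} limiting random model as in Theorem~\ref{theo:construction}.

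Finally, applying the analogue of Proposition~\ref{prop:renormEquation}, the renormalised equation reads
\begin{equ}
\d_t u = \Delta u - u^3 - a\eps\,u^5 + \bigl(c_\eps + \alpha_\eps\bigr)u + \beta_\eps + \xi_\eps\;,
\end{equ}
where $\alpha_\eps$ collects all the counterterms proportional to $u$ generated by the renormalisation (with most of it already cancelled against $3C/\eps - 15aC^2/\eps$ by the Wick structure) and $\beta_\eps$ is an odd-symmetry constant which vanishes because all diagrams involved have an odd number of noises. The point is that the residual of $c_\eps+\alpha_\eps$, after cancellation of the $\eps^{-1}$ divergences by the Hermite structure, is of the form $a\log\eps + b + o(1)$ for a universal coefficient $a>0$ (arising from the sunset-type diagram involving the $\Phi^5$ vertex) and an arbitrary $b$ tunable by the choice of $\theta$. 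Choosing $\theta = a\eps\log\eps + b\eps$ with the correct $a$ cancels the $\log\eps$ and leaves the finite renormalisation constant corresponding to that of Theorem~\ref{theo:construction}; the residual $a\eps\,u^5$ vanishes in the limit by the $\eps$ factor. The main obstacle is the explicit identification of the universal coefficient $a$, since this requires a careful accounting of \emph{all} logarithmically divergent subdiagrams produced by the extended renormalisation group acting on the $\Phi^5$-enriched structure, together with the proof that the remaining terms in $\alpha_\eps$ either cancel by symmetry or converge to finite limits that can be absorbed in $b$.
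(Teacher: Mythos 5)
Your outline starts from exactly the step that the paper explicitly points out does \emph{not} work.  You propose to close the structure under $\CI$ applied to quintic products $\tau_1\cdots\tau_5$ and to write the fixed-point problem with an explicit factor $a\eps$ in front of the quintic term.  Both of these moves are the ones the paper rules out: with $|\sXi| = -{5\over 2}^{-}$ one has $|\CI(\sXi)^5|\approx -{5\over 2}$, hence $|\CI(\CI(\sXi)^5)|\approx -{1\over 2}$, which can again be raised to the fifth power and fed back into $\CI$, producing infinitely many symbols of negative homogeneity (the quintic interaction is critical, not subcritical, in three space dimensions).  Your statement that ``the additional symbols have homogeneity bounded below by $-5/2$'' is true for each individual tree, but there are now infinitely many of them in every negative range, so $T_\alpha$ stops being finite-dimensional and the whole machinery breaks down.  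Moreover, if the fixed-point equation carries an explicit coefficient $\eps$, the solution map is $\eps$-dependent and one cannot invoke continuity of the solution map plus convergence of models, which is the entire point of the factorisation in Section~\ref{sec:Phi}.

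The key device you are missing is the abstract ``multiplication-by-$\eps$'' operator $\CE$, introduced in the paper with $|\CE(\tau)| = |\tau| + 1$ and inserted between $\CI$ and the quintic product:\ the recursion becomes $\CI(\CE(\tau_1\cdots\tau_5)) \in \CU$.  Assigning $\CE$ a \emph{positive} homogeneity restores subcriticality and hence the local finiteness of the index set, while simultaneously making the fixed-point problem \eqref{e:FPeps} independent of $\eps$: all $\eps$-dependence is pushed into the family of canonical lifts $\LL_\eps(\xi)$ via \eqref{e:canonicalEps}.  Your heuristic that ``the extra symbols carry an extra power of $\eps$ and drop out'' is morally what $\CE$ encodes, but it is precisely this heuristic that needs to be implemented algebraically in order for the analytic estimates to close.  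Two further structural points in the paper are also absent from your proposal: (i) convergence is proved only on the \emph{restriction} of the extended model to the smaller structure $(T,\CG)$ --- the models do \emph{not} converge on $(T_\ex,\CG)$, and the argument relies on Remark~\ref{rem:restrModel} that solutions to \eqref{e:FPeps} only see the restricted model; and (ii) the identification of the limit with the $\Phi^4_3$ model from Theorem~\ref{theo:construction} is done by showing (Theorem~\ref{theo:convIndep}) that $\lim_{\eps\to 0}\tilde M_{c,\eps}\LL_{c\eps}(\xi_\eps)$ is independent of the extra parameter $c$, which is what lets one compare the quintic approximation with the purely cubic one.  Your final paragraph is qualitatively in the right direction about the origin of the $\log\eps$ coefficient and the odd-symmetry vanishing of $\beta_\eps$, but without the $\CE$-extended structure there is no well-posed framework in which to carry out that diagram-by-diagram analysis.
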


The remainder of this section is devoted to a sketch of the proof of Theorem~\ref{theo:universalWeak}.
It is not clear a priori how to fit \eqref{e:fullEqu} into the 
framework we developed in this article. One might think that one could try to 
change \eqref{e:abstractFull} into something like
\begin{equ}[e:abstractFullPotential]
\Phi = \CP \one_{t>0}\bigl(\sXi + c_1\Phi - c_2\Phi^3 - a\eps \Phi^5\bigr) + P \Phi_0\;,
\end{equ}
for suitable choices of constants $c_i$, but one then immediately runs into two
(related) problems:
\begin{claim}
\item In order to interpret \eqref{e:abstractFullPotential} as an equation in $\CD^\gamma$ for
some regularity structure, we would naturally have to replace the recursive step \eqref{e:induction} by
something like 
\begin{equ}
\tau_i \in \CU \quad\Rightarrow\quad \CI(\tau_1\tau_2\tau_3\tau_4\tau_5) \in \CU\;.
\end{equ}
The problem then is that the resulting collection of symbols no longer has the property that,
for every $\gamma \in \R$, there are only finitely many symbols of homogeneity at most $\gamma$.
This pretty much destroys the rest of the argument.
\item The whole point of our construction was to build a solution map that no longer depends on
$\eps$ and to hide all the singular $\eps$-dependency of the solution in the convergence of a suitable
sequence of ``models'' to a limit. In the case of \eqref{e:abstractFullPotential}, we still have 
an $\eps$-dependent fixed point problem. This is of course not a problem if its solution depends continuously
on $\eps$ as $\eps \to 0$,  but this is precisely not expected here: Theorem~\ref{theo:universal} states that
the effect of the quintic term is still felt in the limit since the constant appearing in front of the cubic
term in the limit is given by the cubic term of the effective potential $\scal{V_0}$ and not by that
of $V_0$.  
\end{claim}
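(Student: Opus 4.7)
My plan is to combine the abstract solution theory of the preceding sections with a Da~Prato--Debussche-type Picard expansion adapted to the Hermite structure of the nonlinearity, in such a way that the quintic term contributes, modulo remainders that vanish as $\eps \to 0$, only to the renormalisation of the cubic and linear coefficients. The key observation making this possible is that the Hermite polynomials $H_3$ and $H_5$ appearing in \eqref{e:fullEqu} are by definition the Wick powers of $\Phi$ with respect to the formal Gaussian of variance $C/\eps$, so that the identity $H_5(\Phi,C/\eps) = \Phi^5 - 10(C/\eps)\Phi^3 + 15(C/\eps)^2 \Phi$ already furnishes, for free, the sub-leading counterterms needed to match the renormalised $\Phi^4_3$ equation.

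Concretely, I would rewrite \eqref{e:fullEqu} as the abstract fixed point
\begin{equ}
\Phi = \CP\one_{t>0}\bigl(\sXi - \Wick{\Phi^3} - a\eps\, \Wick{\Phi^5}\bigr) + P\Phi_0\;,
\end{equ}
where $\Wick{\cdot}$ denotes formal Wick ordering of variance $C/\eps$, and make the ansatz $\Phi = \<1> - \<3> + \phi\,\1 + \psi$ of \eqref{e:decompuPhi}, with $\psi \in \CD^{\gamma,\eta}$ of positive homogeneity. Applying the Wick binomial identity $\Wick{(A+B)^n} = \sum_k \binom{n}{k}\Wick{A^k}B^{n-k}$ with $A = \PPi^\eps\<1>$ and $B$ the remainder of $\Phi$, the quintic $\eps\,\Wick{\Phi^5}$ splits into two groups: terms with $k \le 3$, which live inside the $\Phi^4_3$ regularity structure and contribute only shifts to the cubic and linear coefficients, and terms with $k \in \{4,5\}$, which produce stochastic objects outside the structure (obstruction~(i)). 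For the first group, a careful accounting of divergences shows that, once all subtractions inherent in the Hermite formulation are taken into account, exactly one logarithmically divergent constant remains and must be cancelled by the prescribed ansatz $\theta = a\eps\log\eps + b\eps$; this determines $a$ uniquely and leaves $b$ as a free parameter matching the remaining finite shift.

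The main technical obstacle is the control of the second group. Here I would bypass the regularity structure entirely and bound these residuals directly as random distributions. Using the It\^o isometry together with hypercontractivity \eqref{e:hypercontract}, one obtains
\begin{equ}
\E\bigl|\eps\,\Wick{(\PPi^\eps\<1>)^k}(\phi_x^\lambda)\bigr|^{2p} \lesssim \bigl(\eps^{2}\log(1/\eps)\bigr)^{p}\lambda^{-5p}\;,
\end{equ}
uniformly over $\phi \in \CB_r$, $\lambda \in (0,1]$ and locally in $x$, where the logarithm reflects the borderline integrability of $(K * K)^5$ in effective dimension~$5$ and is only present for $k = 5$. By Theorem~\ref{theo:Kolmogorov}, this suffices to show that each such residual converges to $0$ in probability in a suitable $\CC^\beta$. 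The corresponding cross terms with the lower trees $\<3>$, $\phi$ and $\psi$ can be treated analogously, since the extra factors of $\eps$ and the regularity gained from $\psi$ more than compensate for the added stochastic singularity. Feeding this back into the fixed point and invoking the continuity statement of Theorem~\ref{theo:localSol} together with the convergence of the renormalised Gaussian models from Section~\ref{sec:convGauss} then yields the convergence of $\Phi_{\eps,\theta}$ to the process $\Phi$ of Theorem~\ref{theo:construction}. The hardest part, carried out carefully in \cite{Weijun}, is to organise the combinatorics of the Hermite expansion so that every divergence emerging from the $k \le 3$ piece is either tracked by the Gaussian renormalisation already performed in Section~\ref{sec:convGauss}, absorbed into the freedom in $b$, or matched by the single logarithmic counterterm that fixes the value of $a$.
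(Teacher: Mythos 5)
Your strategy---expand $\eps\,\Wick{\Phi^5}$ by the Wick binomial formula around $A = \PPi^\eps\<1>$ and control the pieces falling outside the cubic structure by direct moment bounds---runs into both of the obstructions it is meant to circumvent. First, the terms with $k \le 3$ do \emph{not} stay inside the $\Phi^4_3$ structure: the remainder $B = \Phi - \<1>$ is itself a linear combination of trees (it contains $\<30>$, $\<20>$, \dots), so $\Wick{A^3}\,B^2$ already produces products of \emph{five} elements of $\CU$ such as $\CI(\sXi)^3\,\CI(\CI(\sXi)^3)^2$, which are excluded from $\CW$ in \eqref{e:defCW} and have negative homogeneity ($-\tfrac12 - 9\kappa$). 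Feeding such terms back through $\CP$ and the nonlinearity regenerates fifth powers of the new trees, and without a device that credits the prefactor $\eps$ with a positive homogeneity (this is exactly the role of $|\CE\tau| = |\tau|+1$) one obtains infinitely many symbols below any fixed homogeneity---item (i) resurfaces untouched. Second, your bound on $\eps\,\Wick{A^k}$ for $k\in\{4,5\}$ is essentially correct, but convergence to $0$ of an isolated residual in $\CC^{-5/2-\kappa}$ is not usable on its own: after convolution with $K$ this object must be multiplied with $\<1>$, $\<2>$ and the other singular components when it re-enters the cubic and quintic terms, and these products are precisely of the ill-posed type the theory exists to handle. Several of the resulting cross terms (the trees $\<3E3>$, $\<3E4>$, $\<E4E4>$, $\<E5E4>$) have expectations of order $\eps^{-1}$, so that even with the prefactor $\eps$ they converge to finite \emph{non-zero} constants; they must be subtracted (the constants $\tilde C_3^{(\eps)}$, $\tilde C_4^{(\eps)}$) before any vanishing statement can hold, and they survive as the counterterms $20a\tilde C_3 + 25a^2\tilde C_4$ of Proposition~\ref{prop:renormBig}. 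Your assertion that the cross terms ``can be treated analogously'' because the factors of $\eps$ ``more than compensate'' skips exactly this renormalisation.

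A second, independent gap concerns item (ii): even granting control of all residuals, your fixed point problem still carries an explicit $\eps$ (in the nonlinearity and in the Wick variance $C/\eps$), so you would still need local well-posedness with existence time and solution-map continuity uniform in $\eps$; nothing in the proposal supplies this. The paper removes the $\eps$ from the fixed point altogether by encoding multiplication by $\eps$ as the abstract symbol $\CE$, so that \eqref{e:FPeps} is $\eps$-independent and all singular $\eps$-dependence is pushed into the convergence of the renormalised models (Theorem~\ref{theo:convIndep}), to which the continuity statement of Theorem~\ref{theo:localSol} then applies. Your final bookkeeping of the constants---the Wick ordering supplying the $\eps^{-1}$ subtraction, one logarithm fixing $a$, finite shifts absorbed into $b$---is consistent with the paper's conclusion, but the analytic route proposed to reach it does not close.
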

In order to circumvent these problems, the idea introduced in \cite{Jeremy} 
is to build a regularity structure which encodes
the operation ``multiplication by $\eps$'' as a non-trivial operation in $\CD^\gamma$, thus allowing
us again to set up an $\eps$-independent fixed point problem and to shift all of the 
difficulties to that of proving that a suitable model converges to a limit.

For this, besides the symbols $\X$, $\sXi$ and the operator $\CI$ used before, we introduce
a new operator $\CE$ and we consider instead of \eqref{e:abstractFullPotential} a fixed point problem of the type
\begin{equ}[e:FPeps]
\Phi = \CP \one_{t>0}\bigl(\sXi - \Phi^3 - a\hat \CE \Phi^5\bigr) + P \Phi_0\;,
\end{equ}
where $\hat \CE$ will be an operator acting on $\CD^\gamma$ built from $\CE$ 
(in a way reminiscent of how $\CP$ and $\CK$ were built from $\CI$).
The recursive step \eqref{e:induction} in the construction of the regularity structure is then
replaced by  
\begin{equ}
\tau_i \in \CU \quad\Rightarrow\quad \CI(\tau_1\tau_2\tau_3) \in \CU\quad\&\quad \CI(\CE(\tau_1\tau_2\tau_3\tau_4\tau_5)) \in \CU\;.
\end{equ}
and the definition \eqref{e:defCW} of the collection of symbols $\CW$ is replaced by
\begin{equ}
\CW = \{\sXi\} \cup \{\tau_1\tau_2\tau_3\,:\, \tau_i \in \CU\} \cup \{\CE(\tau_1\tau_2\tau_3\tau_4\tau_5)\,:\, \tau_i \in \CU\}\;.
\end{equ}
We also define a slightly larger collection of symbols
\begin{equ}
\CW_\ex = \CW \cup \{\tau_1\tau_2\tau_3\tau_4\tau_5\,:\, \tau_i \in \CU\}\;,
\end{equ}
and we call similarly to before $T$ and $T_\ex$ the linear spans of $\CW$ and $\CE_\ex$ respectively.
In this construction, we consider the symbol $\CE$ as an ``integration operator of order $1$'' 
in the sense that 
we set $|\CE(\tau)| = |\tau| + 1$. 
This might seem strange at first sight: after all $\CE$ is 
supposed to represent multiplication by the number $\eps$, while the homogeneity is supposed to be
associated to some kind of regularity, but we will see that this is actually quite natural.

The structure group $\CG$ associated to the two graded
spaces $T$ and $T_\ex$ is then built in an analogous way to the construction of Section~\ref{sec:structure}.
The difference is that elements of $T_+$ are now not just polynomials in $X$ and $\II_k(\tau)$
as in \eqref{e:genPolynom}, but are also allowed to contain factors $\EE_k(\tau)$, with 
the maps $\Delta$ and $\Deltap$ defined as above, but with the additional rule
\begin{equ}
\Delta \CE(\tau) = (\CE \otimes \id)\Delta \tau + \sum_k {\X^k \over k!} \otimes \EE_k(\tau)\;,
\end{equ}
and similarly for $\Deltap$.
Given a smooth space-time function $\xi$, we now have a \textit{family} of canonical lifts
$\LL_\eps(\xi) = (\Pi^{(\eps)}, \Gamma^{(\eps)})$ of $\xi$ to models on $\TT_\ex$ given as before by 
\eqref{e:canonical} and \eqref{e:defAdmissible}, together with
\begin{equ}[e:canonicalEps]
\bigl(\Pi_z^{(\eps)} \CE(\tau)\bigr)(\bar z) = \eps \bigl(\Pi_z^{(\eps)} \tau\bigr)(\bar z)
- \eps\sum_{|k| < |\tau|+1} {(\bar z - z)^k \over k!} \bigl(D^k \Pi_z^{(\eps)} \tau\bigr)(z)\;.
\end{equ}
The operators $\Gamma_{z\bar z}^{(\eps)}$ are also built in exactly the same way as
in Section~\ref{sec:structure}, with the additional definition
\begin{equ}[e:deffeps]
f_z^{(\eps)}(\EE_\ell\tau) = - \eps \sum_{|k+\ell| < |\tau|+1} {(-z)^k \over k!} \bigl(D^{k+\ell} \Pi_z^{(\eps)} \tau\bigr)(z)\;,
\end{equ}
which in particular allows us to rewrite \eqref{e:canonicalEps} as
\begin{equ}[e:canonicalEps2]
\Pi_z^{(\eps)} \CE(\tau) = \eps \Pi_z^{(\eps)} \tau
+ \sum_{|k| < |\tau|+1} \Pi_z^{(\eps)} {(\X + z)^k \over k!} f_z^{(\eps)}(\EE_k\tau)\;.
\end{equ}
These definitions guarantee that if we define $\PPi^{(\eps)}$ as in Section~\ref{sec:structure},
one has
\begin{equ}[e:defPPieps]
\PPi^{(\eps)} \CE(\tau) = \eps \PPi^{(\eps)}\tau\;,
\end{equ}
and it is in this sense that $\CE$ represents multiplication by $\eps$.

\begin{remark}
These definitions deviate slightly from those given in \cite{Jeremy}. They are however
equivalent and amount to a simple change of basis in $T^+$. The reason for our
choice here is that it leads as in \eqref{e:defDeltap} to the nicer expression
\begin{equ}
\Deltap \EE_\ell(\tau) = (\EE_\ell \otimes \id) \Delta \tau + 
\sum_k {X^k\over k!} \otimes \EE_{\ell+k}(\tau)\;,
\end{equ}
for $\Deltap$, where we used similarly to before the convention that $\EE_\ell(\tau) = 0$ 
unless $|\tau| + 1 > |\ell|$.
\end{remark}

\begin{remark}
At this stage, we would like to point out one fundamental difference between
$\CE$ and $\CI$. In the case of $\CI$, the fact that it represents $K$ in
the sense of \eqref{e:defAdmissible} was made part of the definition of our
notion of admissible model, and we only ever considered renormalisation
procedures that preserve the set of admissible models. 
In the case of $\CE$ however, we only impose \eqref{e:deffeps} and
\eqref{e:defPPieps} for the canonical lifts $\LL_\eps(\xi)$. In general,
we do \textit{not} impose that these identities are satisfied for an arbitrary
admissible model. In particular, it is crucial to allow for renormalisation 
procedures that break them. This is reminiscent of the status
of the product which is typically not preserved by renormalisation.
\end{remark}

This construction also suggests a definition for the operator $\hat \CE$: 
given an admissible model $(\PPi, f)$, we set
\begin{equ}
\bigl(\hat \CE U\bigr)(z) = \CE U(z) - \sum_\ell {(\X + z)^\ell \over \ell!} f_z(\EE_\ell(U(z)))\;.
\end{equ}
Combining this with \eqref{e:canonicalEps2}, it follows immediately that for a canonical model
of the type $\LL_\eps(\xi)$, one has the identity
\begin{equ}
\CR \hat \CE U = \eps \CR U\;.
\end{equ}
This however is a particular property of such canonical models. In general, there is
no reason for $\CR \hat \CE U$ to be proportional to $\CR U$!
What is more, the operator $\hat \CE$ has a regularising property in the $\CD^\gamma$-scale
of spaces. More precisely, we have the following result, the proof of which
is given in \cite[Prop.~3.15]{Jeremy}.

\begin{proposition}
For the regularity structure $(T_\ex,\CG)$ just constructed, there exists 
$\delta > 0$ such that the operator $\hat \CE$ is bounded from
$\CD^\gamma$ to $\CD^{(\gamma + 1) \wedge \delta}$.
\end{proposition}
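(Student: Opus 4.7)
My plan is to exploit the structural parallel between the definition of $\hat{\CE}$ and that of the Schauder lift $\CK$ in \eqref{e:defKf}, and adapt the proof of Theorem~\ref{theo:Schauder}. Indeed, a direct binomial computation starting from \eqref{e:deffx} shows that $\CJ(x)\tau = -\sum_{\ell}\frac{(\X+x)^\ell}{\ell!}f_x(\II_\ell\tau)$, so the subtraction in $\hat{\CE}$ is literally the analogue of the $\CJ$-correction inside $\CK$, with $\CE$ replacing $\CI$ and with $\beta$ replaced by $1$. The counterpart of the non-local remainder $\CN f$ is absent here, because $\CE$ represents the \emph{local} operation of multiplication by $\eps$ rather than convolution against a kernel, so no correction for the gap between $\CR U$ and $\Pi_z U(z)$ is required.

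\textbf{Step 1: Range.} Since $|\CE\tau|=|\tau|+1$, the main term $\CE U(z)$ already lies in $T_{<\gamma+1}$, and the polynomial correction $\sum_\ell \frac{(\X+z)^\ell}{\ell!}f_z(\EE_\ell U(z))$ is nonzero only for $|\ell|<|U(z)|+1\le\gamma+1$. The cutoff $\delta$ reflects the fact that only finitely many $\EE_\ell$ live in $T^+$ (i.e.\ the polynomial sector of $T$ is truncated at some fixed level), so there is a hard ceiling beyond which the regularity gain $\gamma\mapsto\gamma+1$ cannot be exercised.

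\textbf{Step 2: Decomposition.} For $\alpha<(\gamma+1)\wedge\delta$, I would write
\begin{equ}
\hat{\CE}U(z)-\Gamma_{z\bar z}\hat{\CE}U(\bar z)=\mathrm{(A)}+\mathrm{(B)},
\end{equ}
where (A) collects the contribution of $\CE U$ and (B) collects the contribution of the polynomial subtractions. From the recursive formula $\Delta\CE(\tau)=(\CE\otimes\id)\Delta\tau+\sum_k\frac{\X^k}{k!}\otimes\EE_k(\tau)$ I obtain the intertwining identity $\Gamma_{z\bar z}\CE=\CE\,\Gamma_{z\bar z}+\sum_k\frac{\X^k}{k!}\gamma_{z\bar z}(\EE_k\,\cdot)$, so (A) decomposes into $\CE[U(z)-\Gamma_{z\bar z}U(\bar z)]$ plus a purely polynomial piece.

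\textbf{Step 3: Non-polynomial components.} On a component of the form $\CE(\sigma)$, only the first piece of (A) contributes, and the assumption $U\in\CD^\gamma$ gives $\|\CE[U(z)-\Gamma_{z\bar z}U(\bar z)]\|_{|\CE\sigma|}\lesssim|z-\bar z|^{\gamma-|\sigma|}=|z-\bar z|^{\gamma+1-|\CE\sigma|}$, as required.

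\textbf{Step 4: Polynomial components.} This is the delicate part. On each $\X^k$-component, I would collect the polynomial correction coming from (A) (produced by the $\sum_k\frac{\X^k}{k!}\otimes\EE_k$ tail of $\Delta\CE$) together with the expansion of (B), which involves $\Gamma_{z\bar z}$ acting on $\frac{(\X+\bar z)^\ell}{\ell!}f_{\bar z}(\EE_\ell U(\bar z))$. The goal is to show that, after using the second intertwining relation $(\Deltap\otimes\id)\Deltap=(\id\otimes\Deltap)\Deltap$ of \eqref{e:propDelta} and the recursive rule $\Deltap\EE_\ell(\tau)=(\EE_\ell\otimes\id)\Delta\tau+\sum_k\frac{X^k}{k!}\otimes\EE_{\ell+k}(\tau)$, everything collapses into a single clean term controlled by the increment $f_z(\EE_k U(z))-[\text{character-shift of }f_{\bar z}(\EE_k U(\bar z))]$. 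The pointwise analytic bound on this increment follows from \eqref{e:deffeps} and the model axioms \eqref{e:bounds} on $\Pi_z$, by a Taylor-remainder argument analogous to the one justifying $\CJ(x)$ in Remark~\ref{rem:welldef}; it yields exactly the order $|z-\bar z|^{\gamma+1-|k|}$.

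The main obstacle is the algebraic bookkeeping in Step~4: one must check that the polynomial corrections originating from $\Delta\CE$ in (A) and from $\Deltap\EE_\ell$ in (B) combine \emph{precisely} to the telescoping increment of $f\mapsto f(\EE_k\cdot)$, with no leftover term. This is exactly the same cancellation that drives the proof of Theorem~\ref{theo:Schauder}, and it is forced by the coherence of the maps $\Delta$ and $\Deltap$. Once this identity is verified, the $\CD^{(\gamma+1)\wedge\delta}$-bound on $\hat{\CE}U$ follows by combining Steps~3 and~4, and the continuity of $\hat{\CE}$ is immediate from the linearity of all operations involved.
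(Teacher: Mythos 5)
Your Steps 2 and 3 are sound, and the intertwining identity $\Gamma_{z\bar z}\CE=\CE\,\Gamma_{z\bar z}+\sum_k\frac{\X^k}{k!}\gamma_{z\bar z}(\EE_k\,\cdot)$ is correctly derived and is exactly the right tool. The genuine gap is in Steps 1 and 4, and they are linked. In Step 1 you attribute the cap $\delta$ to a ``truncation of the polynomial sector at some fixed level''; no such truncation is present, and the $\EE_\ell$ live in $T^+$ rather than in the polynomial sector of $T$ in the first place. The cap actually comes from the degree restriction $|\ell|<|\tau|+1$ built into the symbols $\EE_\ell(\tau)$ and into $\Delta\CE$: for each homogeneous component $\tau$ of $U(z)$, the operator $\hat\CE$ subtracts only those Taylor monomials $\X^\ell$ with $|\ell|$ strictly below $|\tau|+1$. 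If $\zeta$ is the minimal homogeneity of the sector on which $\hat\CE$ is applied, the Taylor jet subtracted for the lowest-degree component stops short, and this incomplete subtraction is the source of the ceiling $\delta$; it is a property of the sector, not of any truncation of $T^+$.

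This is precisely where Step 4 breaks down. You assert that the telescoping yields the $\X^k$-component of the increment to order $|z-\bar z|^{\gamma+1-|k|}$; if that were so, $\hat\CE U$ would lie in $\CD^{\gamma+1}$ outright and the $\wedge\delta$ in the statement would be superfluous. The cancellation driving Theorem~\ref{theo:Schauder} is \emph{not} of the same type: there, the nonlocal operator $\CN$ of \eqref{e:defN} is essential precisely because it supplies Taylor coefficients of degree between $|\tau|+\beta$ and $\gamma+\beta$ that the purely local $\CJ$ cannot reach. The operator $\hat\CE$ has no analogue of $\CN$, so the Taylor jet runs out at degree $|\tau|+1$ on each homogeneous component; for low-homogeneity components the jet is short, the telescoping cannot be pushed to order $\gamma+1$, and the best bound one can extract is governed by the minimal homogeneity of the sector rather than by $\gamma$. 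To close the argument you must split the $\X^k$-increment according to the homogeneity $\alpha$ of the contributing component of $U$, track which monomials $\X^\ell$ are actually subtracted at each level $\alpha$, and invoke the $\CD^\gamma$ bound on $U$ only where the jets on both sides of the increment are complete to the required degree; carrying this out carefully is what produces the cap $\delta$.
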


In particular, combining this with Theorem~\ref{theo:mult},
it turns out that if we set $|\sXi| = -{5\over 2} - \kappa$ for $\kappa > 0$ small enough, 
then the map $\Phi \mapsto \CP\hat \CE \Phi^5$ is
bounded from $\CD^\gamma$ into $\CD^\gamma$ as soon as $\gamma > 1+4\kappa$.
Similarly to before one can show that, for any admissible model and for sufficiently regular 
initial conditions, the fixed point problem \eqref{e:FPeps} admits short-time solutions, with
the existence time of these solutions uniformly bounded over bounded balls in the space of admissible 
models, and solutions being continuous not just as a function of their initial condition but 
also as a function of the model. Furthermore, for models of the type $\LL_\eps(\xi)$ (with $\xi$ continuous),
these solutions are such that $\phi = \CR \Phi$ solves
\begin{equ}
\d_t \phi = \Delta \phi - \phi^3 - a\eps \phi^5 + \xi\;,\qquad \phi(0,\cdot) = \Phi_0\;.
\end{equ}

\begin{remark}\label{rem:restrModel}
One crucial remark at this stage is that solutions to \eqref{e:FPeps} do not depend on the 
full model on $(T_\ex,\CG)$, but only on its restriction to the smaller regularity structure $(T,\CG)$.
In particular, these solutions make sense for \textit{any} admissible model on $(T,\CG)$, even if
such a model does not come from a model on $(T_\ex,\CG)$!
\end{remark}

As before, one cannot expect that $\LL_\eps(\xi_\eps)$ converges to a limit if $\xi_\eps$ 
is a mollification of white noise. However, it is possible to perform a renormalisation 
such that the models $M_\eps \LL_\eps(\xi_\eps)$ converge to a limit on the smaller
structure $(T,\CG)$. Even after renormalisation, these models do \textit{not} converge on $(T_\ex,\CG)$,
but thanks to Remark~\ref{rem:restrModel} this does not matter for us.
Let us now describe the renormalisation maps $M$ suitable in this setting. We keep the same
graphical notation for basis vectors of $T / T_\ex$ as previously, with $\CE$ depicted by a
circle. With this notation, one has for example
\begin{equ}
\CE(\CI(\Xi)^5) = \<E5>\;,\qquad
\CI(\Xi)^2\CI\bigl(\CE(\CI(\Xi)^5)\bigr) = \<E52>\;.
\end{equ}
Since we only work with Gaussian approximations, we only keep $L_1$ and $L_3$ from
Section~\ref{sec:renorm}, which we rename $\tilde L_1$ and $\tilde L_2$ to avoid confusion. 
The operator $\tilde L_1$ is extended to all of $T_\ex$ in the same way as before by furthermore
postulating that it commutes with $\CE$, so that one has for example
\begin{equ}
\tilde L_1 \<E5> = \binom{5}{2} \<E3>\;,\qquad 
\tilde L_1 \<E52> = \binom{5}{2} \<E32> + \<E50>\;. 
\end{equ}
In the case of $\tilde L_2$, we extend it to $T_\ex$ by simply postulating that 
it vanishes on all symbols except for \<32> and \<22>, so we do for example
set $\tilde L_2 \<3E2> = 0$ rather than $\tilde L_2 \<3E2> = \CE(\1)$.
Finally, we add two more operators $\tilde L_3$ and $\tilde L_4$ given by
\begin{equ}[e:def34]
\tilde L_3 \<3E3> = \1 \;,\qquad 
\tilde L_3 \<3E4> = 4\<1> \;,\qquad 
\tilde L_4 \<E4E4> = \1 \;,\qquad 
\tilde L_4 \<E5E4> = 5\<1> \;,
\end{equ}
as well as $\tilde L_i \tau = 0$ for all combinations of $i \in \{3,4\}$ and $\tau \in \CW$
not appearing in \eqref{e:def34}.
The rationale here is that $\tilde L_3$ ``contracts'' all instances of \<3E3>,
which appears $4$ times in \<3E4> and nowhere else, and similarly for $\tilde L_4$.
Similarly to before, we then consider renormalisation maps of the form 
$\tilde M = \exp\bigl(-\sum_i C_i \tilde L_i\bigr)$. There is an analogue in this context to 
Proposition~\ref{prop:renorm}, which shows that these renormalisation maps are ``legal''
in the sense that they induce an action on admissible models with the property that 
a model represented by $\PPi$ is mapped to one represented by $\PPi \tilde M$.

By a calculation analogous to that of Proposition~\ref{prop:renormEquation}, one can then show the following.

\begin{proposition}\label{prop:renormBig}
Let $\tilde M$ be as above and let $(\Pi^{\tilde M},\Gamma^{\tilde M})$ be the model obtained by acting 
with $\tilde M$ on $(\Pi,\Gamma) = \LL_\eps(\xi)$ for some smooth function $\xi$ and some $\eps > 0$.
Let furthermore $\Phi$ be the solution to \eref{e:FPeps} with respect to the model 
$(\Pi^{\tilde M},\Gamma^{\tilde M})$. Then, the function
$u(t,x) = \bigl(\CR^{\tilde M} \Phi\bigr)(t,x)$ solves the equation
\begin{equ}
\d_t u = \Delta u - H_3(u,\tilde C_1) - a\eps H_5(u,\tilde C_1) - (9 \tilde C_2 + 20a \tilde C_3 + 25a^2 \tilde C_4)u + \xi\;,
\end{equ}
where $H_n$ denotes the $n$th Hermite polynomial as before.
\end{proposition}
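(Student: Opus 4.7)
The plan is to follow the strategy of the proof of Proposition~\ref{prop:renormEquation}, adapted to incorporate the quintic term $-a\hat{\CE}\Phi^5$ in \eqref{e:FPeps} and the two additional generators $\tilde L_3,\tilde L_4$. The essential ingredient, which I would prove along the way as an analogue of Proposition~\ref{prop:renorm} for the enlarged structure $(T_\ex,\CG)$, is the pointwise identity
\begin{equ}
\bigl(\Pi_x^{\tilde M}\tau\bigr)(x) = \bigl(\Pi_x \tilde M \tau\bigr)(x)\;,
\end{equ}
valid for every $\tau\in T_\ex$ when the underlying model is built from a smooth $\xi$ via $\LL_\eps$.

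First I would iterate the fixed point \eqref{e:FPeps} to produce a local expansion of $\Phi\in\CD^\gamma$ modulo terms of strictly positive homogeneity. Since $\<E50>=\CI(\CE(\CI(\sXi)^5))$ has homogeneity $\tfrac12-5\kappa<1$, a single new symbol appears compared to \eqref{e:decompuPhi}, giving
\begin{equ}
\Phi = \<1> + \phi\,\1 - \<30> - 3\phi\,\<20> - a\,\<E50> + \langle\nabla\phi,\X\rangle + (\ldots)\;,
\end{equ}
with $(\ldots)$ of strictly positive homogeneity. Using Theorem~\ref{theo:mult} I would then expand $\sXi - \Phi^3 - a\hat{\CE}\Phi^5$ symbol by symbol, keeping contributions of non-positive homogeneity. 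The cubic part is as in \eqref{e:RHS}; the quintic part, computed in the enlarged sector $T_\ex$ before applying $\hat{\CE}$, produces a polynomial in $\phi$ whose relevant monomials are $\<k>$ (for $k=1,\ldots,5$) together with cross terms pairing $\<E50>$ with lower order factors, all wrapped in $\hat{\CE}$.

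Next I would apply $\tilde M=\exp\bigl(-\sum_i C_i\tilde L_i\bigr)$ to this expansion. The operator $\tilde L_1$ implements the substitution $\<2>\mapsto\1$, which is combinatorially identical to Wick contraction with ``variance'' $C_1$; an elementary binomial computation gives
\begin{equ}
e^{-C_1\tilde L_1}\bigl(\Phi^n\bigr) \;\equiv\; H_n(\Phi,C_1)\;,
\end{equ}
modulo terms of strictly positive homogeneity and modulo the action of the other $\tilde L_i$. Together with the multiplicativity of $\Pi_x$ on the canonical model (so that $\bigl(\Pi_x(\tilde M\Phi)^n\bigr)(x)=u(x)^n$), this explains the appearance of both $H_3(u,\tilde C_1)$ and the argument of $\hat{\CE}$ in $H_5(u,\tilde C_1)$ in the final PDE. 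The remaining generators produce purely linear corrections: $\tilde L_2$ applied to the cubic symbols $\<32>,\<22>$ arising in $\Phi^3$ contributes $-9\tilde C_2\,\tilde M\Phi$; $\tilde L_3$ applied to the quintic symbols $\<3E3>$ and $\<3E4>$ arising in $\hat{\CE}\Phi^5$ contributes $-20a\tilde C_3\,\tilde M\Phi$; and $\tilde L_4$ applied to $\<E4E4>$ and $\<E5E4>$ contributes $-25a^2\tilde C_4\,\tilde M\Phi$. The oddness of the potential ensures that no constant shift appears.

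Finally I would apply $\CR^{\tilde M}$ to both sides of \eqref{e:FPeps}, using Theorem~\ref{theo:Schauder} to commute $\CR^{\tilde M}$ with $\CP$. Combining the identity above with $\CR^{\tilde M}\sXi=\xi$, $\bigl(\Pi_x\tilde M\Phi(x)\bigr)(x)=u(x)$, and the point-evaluation identity $\bigl(\Pi_x\hat{\CE}V(x)\bigr)(x)=\eps\bigl(\Pi_x V(x)\bigr)(x)$ (which follows from \eqref{e:canonicalEps2} since $\Pi_x(\X+x)^\ell(x)=x^\ell$ cancels the polynomial correction in the definition of $\hat{\CE}$), one reads off the PDE in the statement. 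The main obstacle is the algebraic bookkeeping in the preceding step; in particular the coefficient $25a^2\tilde C_4$ is delicate, since it requires tracing how the $-a\<E50>$ term in the expansion of $\Phi$ enters the quintic product $\Phi^5$, producing symbols on which $\tilde L_4$ then extracts the right number of substitution patterns to match the Hermite expansion of $H_5$ exactly.
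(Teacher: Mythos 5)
Your proposal is correct and takes the same route the paper implicitly prescribes: the paper itself gives no proof here beyond the pointer ``by a calculation analogous to that of Proposition~\ref{prop:renormEquation},'' and your sketch is precisely that analogue — iterate the fixed point to the expansion of $\Phi$ modulo positive homogeneity (now including the extra symbol $\<E50>$), expand the right-hand side, apply $\tilde M$, and use the pointwise identity $(\Pi_x^{\tilde M}\tau)(x)=(\Pi_x\tilde M\tau)(x)$ together with the pointwise effect of $\hat\CE$ to read off the renormalised PDE. Your identification of the Hermite structure coming from $e^{-C_1\tilde L_1}$ and of the linear corrections coming from $\tilde L_2,\tilde L_3,\tilde L_4$ matches the paper's intended argument and the stated coefficients.
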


Given $c \in \R$ and $\eps > 0$, we now define 
$\tilde M_{c,\eps} = \exp\bigl(-\sum_i C_i^{(c,\eps)} \tilde L_i\bigr)$ by setting
\begin{equ}
\tilde C_1^{(c,\eps)} = \tilde C_1^{(\eps)}\;,\quad 
\tilde C_2^{(c,\eps)} = \tilde C_2^{(\eps)}\;,\quad
\tilde C_3^{(c,\eps)} = c \tilde C_3^{(\eps)}\;,\quad
\tilde C_4^{(c,\eps)} = c^2 \tilde C_4^{(\eps)}\;,
\end{equ}
with
\begin{equ}
\tilde C_1^{(\eps)} = 
\begin{tikzpicture}  [baseline=10] 
\node[root]	(root) 	at (0,0) {};
\node[cumu2n]	(a)  		at (0,1) {};	
\draw[cumu2] (a) ellipse (8pt and 4pt);
\draw[kernel] (a.east) node[int]   {} to [bend left = 60] (root);
\draw[kernel] (a.west) node[int]   {} to [bend right = 60] (root);
\end{tikzpicture} 
\;,\quad
\tilde C_2^{(\eps)} = 2\;
\begin{tikzpicture}[scale=0.4,baseline=0cm]
	\node at (0,-1)  [root] (root) {};
	\node at (0,1.8)  [int] (middle) {};
	\node[cumu2n] at (-1,0.4) (left) {};
	\draw[cumu2] (left) ellipse (10pt and 20pt);
	\node[cumu2n] at (1,0.4) (right) {};
	\draw[cumu2] (right) ellipse (10pt and 20pt);
	
	\draw[kernel,bend right=40] (left.south) node[int]{} to  (root);	
	\draw[kernel,bend left=40] (left.north) node[int]{} to  (middle);	
	\draw[kernel,bend left=40] (right.south) node[int]{} to  (root);	
	\draw[kernel,bend right=40] (right.north) node[int]{} to  (middle);	
	\draw[kernel] (middle) to (root);
\end{tikzpicture}\;,\quad
\tilde C_3^{(\eps)} = 3! \eps\,
\begin{tikzpicture}[scale=0.4,baseline=0cm]
	\node at (0,-1)  [root] (root) {};
	\node at (0,1.8)  [int] (middle) {};
	\node[cumu2n] at (-1,0.4) (left) {};
	\draw[cumu2] (left) ellipse (10pt and 20pt);
	\node[cumu2n] at (0.8,0.4) (right) {};
	\draw[cumu2] (right) ellipse (10pt and 20pt);
	\node[cumu2n] at (1.8,0.4) (rright) {};
	\draw[cumu2] (rright) ellipse (10pt and 20pt);
	
	\draw[kernel,bend right=40] (left.south) node[int]{} to  (root);	
	\draw[kernel,bend left=40] (left.north) node[int]{} to  (middle);	
	\draw[kernel,bend left=30] (right.south) node[int]{} to  (root);	
	\draw[kernel,bend right=30] (right.north) node[int]{} to  (middle);	
	\draw[kernel,bend left=50] (rright.south) node[int]{} to  (root);	
	\draw[kernel,bend right=50] (rright.north) node[int]{} to  (middle);	
	\draw[kernel] (middle) to (root);
\end{tikzpicture}\;,\quad
\tilde C_4^{(\eps)} = 4! \eps^2\;
\begin{tikzpicture}[scale=0.4,baseline=0cm]
	\node at (0,-1)  [root] (root) {};
	\node at (0,1.8)  [int] (middle) {};
	\node[cumu2n] at (-1.8,0.4) (lleft) {};
	\draw[cumu2] (lleft) ellipse (10pt and 20pt);
	\node[cumu2n] at (-0.8,0.4) (left) {};
	\draw[cumu2] (left) ellipse (10pt and 20pt);
	\node[cumu2n] at (0.8,0.4) (right) {};
	\draw[cumu2] (right) ellipse (10pt and 20pt);
	\node[cumu2n] at (1.8,0.4) (rright) {};
	\draw[cumu2] (rright) ellipse (10pt and 20pt);
	
	\draw[kernel,bend right=50] (lleft.south) node[int]{} to  (root);	
	\draw[kernel,bend left=50] (lleft.north) node[int]{} to  (middle);	
	\draw[kernel,bend right=30] (left.south) node[int]{} to  (root);	
	\draw[kernel,bend left=30] (left.north) node[int]{} to  (middle);	
	\draw[kernel,bend left=30] (right.south) node[int]{} to  (root);	
	\draw[kernel,bend right=30] (right.north) node[int]{} to  (middle);	
	\draw[kernel,bend left=50] (rright.south) node[int]{} to  (root);	
	\draw[kernel,bend right=50] (rright.north) node[int]{} to  (middle);	
	\draw[kernel] (middle) to (root);
\end{tikzpicture}\;,
\end{equ}
with the same graphical notations as in the previous section.
Here, $\tilde C_1$ diverges like $\eps^{-1}$ and $\tilde C_2$ diverges logarithmically,
but the remaining two constants actually converge to finite limits as $\eps \to 0$.
The reason for these definitions is the following result, which is the crucial
point on which the proof of Theorem~\ref{theo:universalWeak} hinges.

\begin{theorem}\label{theo:convIndep}
Let $\xi_\eps = \rho_\eps * \xi$ for some space-time white noise $\xi$ and,
for $c \in \R$ and $\eps > 0$, let
\begin{equ}
\fM_{c,\eps} = \tilde M_{c,\eps} \LL_{c\eps}(\xi_\eps)\;,
\end{equ} 
with $\tilde M_{c,\eps}$ as above. Then, the limit $\lim_{\eps \to 0} \fM_{c,\eps}$
exists in the space of admissible models on $(T,\CG)$ and is independent of $c$.
\end{theorem}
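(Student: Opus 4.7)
The proof proceeds in the same spirit as Section \ref{sec:convGauss} but for the enlarged regularity structure. By the model-valued analogue of Theorem \ref{theo:Kolmogorov} referred to in the remark following its statement, it suffices to establish, for each $\tau \in \CW$ of strictly negative homogeneity, the pointwise convergence of $(\PPi^{(c\eps)}\tilde M_{c,\eps}\tau)(\phi_z^\lambda)$ together with uniform $p$-th moment bounds of the correct scaling $\lambda^{|\tau|p}$; symbols of non-negative homogeneity are then controlled by multiplicativity together with Assumption \ref{ass:poly}. It is convenient to partition $\CW$ into the subset $\CW_0$ of symbols in which $\CE$ does \emph{not} appear and the subset $\CW_\CE$ of symbols where it does.

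On the span of $\CW_0$ the operator $\CE$ plays no role whatsoever, so $\PPi^{(c\eps)}$ coincides there with the canonical lift $\PPi^\eps$ of Section \ref{sec:convGauss}. The operators $\tilde L_3, \tilde L_4$ vanish on $\CW_0$ by definition, and $\tilde C_1^{(c,\eps)}, \tilde C_2^{(c,\eps)}$ are manifestly independent of $c$, so $\tilde M_{c,\eps}$ reduces to the map $M^\eps$ of Section \ref{sec:convGauss} on that subspace. Convergence to the $c$-independent limit therefore holds for these symbols by the analysis already carried out in Section \ref{sec:convGauss}.

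For $\tau \in \CW_\CE$, the key identity \eqref{e:defPPieps} shows that each occurrence of $\CE$ in $\tau$ contributes a multiplicative factor $c\eps$ to $\PPi^{(c\eps)}\tau$. Since $|\CE(\sigma)| = |\sigma|+1$, this factor provides precisely the one extra power of the scaling parameter required to match the expected homogeneity once tested against $\phi_z^\lambda$, at least for $\lambda \gtrsim \eps$. The counterterm $\tilde L_1$ (which commutes with $\CE$) Wick-orders Gaussian products regardless of the number of surrounding $\CE$'s, while the new counterterms $\tilde L_3, \tilde L_4$ are built from graphical expressions $\tilde C_3^{(\eps)}, \tilde C_4^{(\eps)}$ that carry explicit factors of $\eps^1$ and $\eps^2$, in direct correspondence with the number of $\CE$-operators appearing in the symbols they target. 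The scaling $\tilde C_3^{(c,\eps)} = c\,\tilde C_3^{(\eps)}$ and $\tilde C_4^{(c,\eps)} = c^2\,\tilde C_4^{(\eps)}$ then reproduces exactly the $c^k$ factor generated by contracting $k$ instances of $\CE$. After these cancellations are carried out, the surviving expression for $\PPi^{(c\eps)}\tilde M_{c,\eps}\tau$ carries an overall prefactor of $c\eps$ (or a higher power of $\eps$) times kernels that converge in a suitable $L^p$; consequently this quantity tends to zero as $\eps \to 0$, and the vanishing limit is visibly independent of $c$.

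The main obstacle lies in a systematic Feynman-diagram verification that the combination of $\tilde L_1, \tilde L_2$ (already analysed in Section \ref{sec:convGauss}) with the two new counterterms $\tilde L_3, \tilde L_4$ eliminates every divergent subdiagram produced by the most singular $\CE$-containing symbols such as \<E5>, \<E52>, \<E5E4> and \<E4E4>, and that the positive-renormalisation Taylor subtractions intrinsic to \eqref{e:canonicalEps} contribute only terms that are either absorbed into the prescribed counterterms or vanish in the limit. This is a more extensive version of the graphical bookkeeping leading to \eqref{e:Pihateps32}; uniform moment bounds, needed to promote pointwise convergence to convergence in the topology of $\MM$, are then obtained via the argument outlined in Remark \ref{rem:conv}.
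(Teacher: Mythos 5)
Your proposal is correct and takes essentially the same approach as the paper: the paper's (very terse, three-sentence) proof simply states that the argument follows the same line as Section~\ref{sec:convGauss}, that the value of $c$ only affects $\Pi_z\tau$ for symbols $\tau$ containing at least one $\CE$, and that the renormalisation constants are chosen precisely so that all such terms vanish in the limit. Your $\CW_0$ versus $\CW_\CE$ decomposition, the reduction to the Gaussian analysis of Section~\ref{sec:convGauss} on $\CW_0$, and the $(c\eps)^k$ power counting matching the scaling $\tilde C_3^{(c,\eps)}=c\tilde C_3^{(\eps)}$, $\tilde C_4^{(c,\eps)}=c^2\tilde C_4^{(\eps)}$ on $\CW_\CE$ are a faithful and more detailed elaboration of exactly that sketch.
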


\begin{remark}
It is \textit{not} true that the models $\fM_{c,\eps}$ converge to a limit on
the extended regularity structure $(T_\ex,\CG)$. Fortunately, this is not needed
for our argument.
\end{remark}

The proof of this result follows the same line of argument as in Section~\ref{sec:convGauss}.
The reason why the limit turns out to be independent of $c$ is that the value of $c$ only 
affects $\Pi_z \tau$ for symbols $\tau$ containing at least one instance of $\CE$.
Our choice of renormalisation constants is then precisely such that, in the limit $\eps \to 0$,
all of these terms vanish.

Combining Theorem~\ref{theo:convIndep} with Proposition~\ref{prop:renormBig} and the fact that 
solutions to \eqref{e:FPeps}
depend continuously on the underlying model, we conclude that the solutions to
\begin{equ}
\d_t u = \Delta u - H_3(u,\tilde C_1^{(\eps)}) - a\eps H_5(u,\tilde C_1^{(\eps)}) - (9 \tilde C_2^{(\eps)} + 20a \tilde C_3^{(\eps)} + 25a^2 \tilde C_4^{(\eps)})u + \xi_\eps\;,
\end{equ}
as well as those to
\begin{equ}
\d_t u = \Delta u - u^3 +(3\tilde C_1^{(\eps)} - 9 \tilde C_2^{(\eps)})u + \xi_\eps\;,
\end{equ}
both converge to the same limit as $\eps \to 0$.
Since the limit of the latter is the $\Phi^4_3$ model by definition, the claim of Theorem~\ref{theo:universalWeak} 
follows at once.

\bibliographystyle{Martin}
\bibliography{./refs}

\end{document}